\newtheorem{theorem}{Theorem}
\newtheorem{lemma}[theorem]{Lemma}
\newtheorem{corollary}[theorem]{Corollary}
\newtheorem{proposition}[theorem]{Proposition}
\newtheorem{definition}{Definition}
\title{Singularly Weighted X-ray Tensor Tomography}
\author{Jonathan Kay}
\thanks{Department of Mathematics, University of California, Santa Cruz CA 95064; email: jonkay@ucsc.edu}
\author{Fran\c{c}ois Monard}
\thanks{Department of Mathematics, University of California, Santa Cruz CA 95064; email: fmonard@ucsc.edu}
\newcommand\inward{{\partial_+S\mathbb{D}}}
\newcommand\SD{{S\mathbb{D}}}
\newcommand\Dm{\mathbb{D}}
\newcommand\Rm{\mathbb{R}}
\newcommand\Zm{\mathbb{Z}}
\newcommand\dbar{\overline{\partial}}
\renewcommand\d{\mathrm{d}}
\newcommand\tr{\operatorname{tr}}
\newcommand\Id{\operatorname{Id}}
\newcommand\Stt{S_\mathrm{tt}}
\newcommand\pif{\pi_F}
\newcommand\fitt{f^{\mathrm{itt}}}
\newcommand\zbar{{\bar z}}
\newcommand\wtZ{{\widehat Z}}
\newcommand \noteJ[1] {{\color{olive} \ifmmode \text{[J: #1]} \else [J: #1] \fi}}
\date{\today}
\begin{document}

\begin{abstract}
If $d$ is a boundary defining function for the Euclidean unit disk and $I$ denotes the geodesic X-ray transform, for $\gamma\in (-1,1)$, we study the singularly-weighted X-ray transforms $I_m d^\gamma$ acting on symmetric $m$-tensors. For any $m$, we provide a sharp range decomposition and characterization in terms of a distinguished Hilbert basis of the data space, that comes from earlier studies of the Singular Value Decomposition for the case $m=0$, see \cite{Mishra2022}. Since for $m\ge 1$, the transform considered has an infinite-dimensional kernel, we fully characterize this kernel, and propose a representative for an $m$-tensor to be reconstructed modulo kernel, along with efficient procedures to do so. This representative is based on a new generalization of the potential/conformal/transverse-tracefree decomposition of tensor fields in the context of singularly weighted $L^2$-topologies. 
\end{abstract}
\maketitle


\section{Introduction}
We study a family of singularly weighted X-ray transforms that act on tensor fields over the unit disk $\Dm = \{(x,y)\in \Rm^2,\ x^2+y^2\le 1\}$ equipped with the Euclidean metric $g=\d x^2+\d y^2$. The weight is a power $\gamma\in (-1,1)$ of the function 
\begin{align}
    d(z) \coloneq 1-|z|^2
    \label{eq:bdf}    
\end{align}
which is boundary-defining for $\Dm$ (i.e., $d|_{\partial\Dm} = 0$ with non-vanishing differential there). Over functions (i.e., zero-th order tensors), such a transform takes the well-known expression, in fan-beam coordinates $(\beta,\alpha) \in (\Rm/2\pi\Zm)\times [-\pi/2,\pi/2]$,
\begin{align}
I_0 d^\gamma f(z,v) &\coloneq \int_0^{2\cos\alpha} (d^\gamma f)(e^{i\beta} + te^{i(\beta+\alpha+\pi)})\ \d t, \quad f\in C^\infty(\Dm),
\label{eq:I0dgamma}
\end{align}
where we identify $(x,y)\in \Dm$ with the complex number $z=x+iy$. 

Transform \eqref{eq:I0dgamma} has applications in tomography, where the integrand may be supported all the way to the boundary, and where boundary behavior needs to be accounted for, whether singular or finite order of vanishing. As discussed in the recent review article on the topic \cite{monard2023non}, the study of boundary mapping properties for the X-ray transform, and the design of functional settings (Fr\'echet and Sobolev) where normal operators ($I_0$ composed with some weighted adjoint) can be made {\em invertible}, is fairly recent and was initiated as an important stepping stone toward the design of robust statistical estimators when dealing with noisy X-ray data \cite{Monard2017}. Known results leverage a combination of microlocal and singular analysis arguments \cite{Monard2017,Mazzeo2021} which hold on more general Riemannian manifolds, and specific Fourier-based model examples \cite{Monard2019a,Mishra2022,Monard2021} tailored to particular geometries such as simple geodesic disks of constant curvature. Transform \eqref{eq:I0dgamma} has a well-studied SVD \cite{Louis1984,hansen2021computed} and it was proved recently in \cite{Mishra2022} that the normal operator $(I_0 d^\gamma)^* I_0 d^\gamma$ (relative to an appropriate $L^2{-}L^2$ setting discussed below) is an isomorphism of $C^\infty(\Dm)$, that is tame with tame inverse. Tameness here is thought relative to a tame Fr\'echet structure on $C^\infty (\Dm)$ defined by some non-standard Sobolev semi-norms $\{\widetilde{H}^{s,\gamma}(\Dm)\}_{s\ge 0}$ coming from domain spaces of a degenerately elliptic operator ${\mathcal{L}}_\gamma$, see \cite{Mishra2022,monard2024boundary}. 

While much of this literature is focused on the X-ray transform over {\em functions}, a similar study over {\em tensor fields} needs to be made, for its applications to travel-time tomography \cite{Uhlmann2017}, Doppler tomography \cite{Holman2009} and inverse problems in transport \cite{Monard2017c,Fujiwara2019}, among others. Such a study should include defining the singularly-weighted X-ray transform on tensor fields, identifying its kernel, its mapping properties and its range. For tensor fields of order $m\ge 1$, since the transform has a non-trivial kernel, the design of gauge representatives modulo kernel which can be reconstructed from X-ray data, along with efficient reconstruction, is in order. In this context, prior works on the Euclidean disk appeared in the case $\gamma=0$ in \cite{Kazantsev2004,Monard2015a}, the former focused on the reconstruction of the solenoidal representative, the latter focused on an ``iterated-tt'' representative, where ``tt" stands for {\em transverse-tracefree}, constructed out of an iteration of a potential/conformal/tt decomposition result for tensor fields (see also \cite[Theorem 1.5]{Dairbekov2011}). A similar decomposition was also recently established for simply connected asymptotically hyperbolic surfaces in \cite{eptaminitakis2025tensor}, along with range characterization results and reconstruction procedures for even-order tensor fields in the case of the Poincar\'e disk.

The present article combines the philosophy of \cite{Monard2015a} in deriving an efficient approach to the tensor tomography problem, and the results known for the singularly-weighted X-ray transform on functions \cite{Mishra2022}. The main results are a thorough study of the $d^\gamma$-weighted X-ray transform on tensor fields, consisting of sharp kernel and range characterizations (Theorem \ref{thm:rangechar}), the design of an appropriate tensor representative modulo kernel (Corollary \ref{cor:decomposition}), and reconstruction procedures (Theorem \ref{thm:reconstruction}). The latter are flexible relative to tensor order in the sense that, unlike the problem of reconstructing the {\em solenoidal} representative of an $m$-tensor (see e.g. \cite{Kazantsev2004,Sharafudtinov1994}), whose technicalities strongly depend on $m$, the present approach proposes a rather simple and order-blind reconstruction approach. Another interesting feature that appears is the design of non-standard Sobolev spaces $H_0^{1,\gamma}(\Dm)$ (different from the ones from \cite{Mishra2022} discussed above), and new elliptic decompositions for the Guillemin-Kazhdan operators $\eta_\pm$ \cite{Guillemin1980} in weighted spaces, which help capture fine regularity properties of geodesic transport phenomena in this weighted context. 

We now state the main results and give an outline of the remainder at the end of the next section.

\section{Statement of main results}\label{sec:mainresults}

\subsection{Preliminaries and notation}\label{sec:introprelim}

Consider the unit disk $\Dm =\{(x,y)\in\Rm^2 \;|\; x^2+y^2\le 1\}$ with Euclidean metric $g= \d x^2+\d y^2$ and holomorphic coordinate $z=x+iy$. Here and below, we will denote the Wirtinger derivatives
\[ \partial := \frac{1}{2} (\partial_x - i\partial_y), \qquad \dbar := \frac{1}{2} (\partial_x + i\partial_y).  \]
The unit tangent bundle $\SD$ is parameterized by 
\begin{align}
    \Dm\times (\Rm/2\pi\Zm) \ni (z=x+iy,\theta) \mapsto (z,\cos\theta\partial_x + \sin\theta\partial_y) \in S\Dm,
    \label{eq:SDchart}
\end{align}
where the Sasaki volume form on $S\Dm$ equals $\d\Sigma^3:= \d x\d y\d\theta$. The inward-pointing boundary of $S\Dm$, $\partial_+ S\Dm$, as a model for all oriented geodesic segments through $\Dm$, is parameterized in {\em fan-beam coordinates} $(\Rm/2\pi\Zm)_\beta\times[-\frac{\pi}{2},\frac{\pi}{2}]_\alpha$, where $\beta$ parameterizes a boundary point $z=e^{i\beta}$ and $\alpha$ parameterizes the inward-pointing direction $v = -\cos(\beta+\alpha) \partial_x - \sin(\beta+\alpha) \partial_y$. The Euclidean geodesic flow emanating from $\partial_+ S\Dm$ is then parameterized as 
\begin{align}
    \varphi_t(\beta,\alpha) = ( z_{\beta,\alpha}(t) \coloneq e^{i\beta} + t e^{i(\beta+\pi+\alpha)}, \theta = \beta+\pi+\alpha) \in S\Dm, \quad (\beta,\alpha)\in \partial_+ S\Dm, \quad t\in [0,2\cos\alpha].
    \label{eq:geoflow}
\end{align}
The length of the geodesic $z_{\beta,\alpha}$ is $\tau(\beta,\alpha)=2\cos\alpha$ and with $d$ defined in \eqref{eq:bdf}, we have
\begin{align}
    d(z_{\beta,\alpha}(t))=(2\cos \alpha)t-t^2, \quad (\beta,\alpha)\in \inward,\quad t\in [0,2\cos\alpha].
    \label{eq:d_along_flow}
\end{align}
Definition \eqref{eq:I0dgamma} can then generalized to integrands on $S\Dm$, to define the operator $Id^\gamma \colon C^\infty(\SD) \to C^\infty(\inward)$ given by 
\begin{align}
    I d^\gamma f(\beta,\alpha) \coloneq \int_0^{2\cos\alpha} f(\varphi_t(\beta,\alpha))d^\gamma(z_{\beta,\alpha}(t)) \d t, \quad f\in C^\infty(S\Dm).
    \label{eq:Idgamma}
\end{align}

\noindent{\bf Notation.} In what follows, several weighted Hilbert spaces will appear, and we will adopt the following shorthand notation: $L^2_\gamma(\Dm)$ will denote $L^2(\Dm,d^\gamma|\d z|^2)$; $L^2_\gamma(S\Dm)$ will denote $L^2(\SD, d^\gamma \d\Sigma^3)$ and $L^2_\gamma (\inward)$ will denote $L^2(\inward, \mu^{-2\gamma}\d\beta \d\alpha)$. The space $L^2_\gamma (\inward)$ also admits an orthogonal splitting relative to the orientation-reversing\footnote{also referred to as ``antipodal scattering relation"} map $\mathcal{S}_A\colon \partial_+ S\Dm \to \partial_+ S\Dm$
\begin{align}
    (\beta,\alpha) \mapsto (\beta+\pi+2\alpha, -\alpha), \qquad (\beta,\alpha)\in \partial_+ S\Dm.
\end{align}
Such a map preserves the measure $\d\beta\d\alpha$, and we thus have the orthogonal splitting 
\begin{align}
\begin{split}
    L^2_\gamma(\inward) &= L^2_{\gamma,-}(\inward) \stackrel{\perp}{\oplus} L^2_{\gamma,+}(\inward), \\
    \text{where}\quad L^2_{\gamma,\pm} (\partial_+ S\Dm) &:= L^2_{\gamma} (\partial_+ S\Dm) \cap \ker (\Id \mp \mathcal{S}_A^*).     
\end{split}
\label{eq:ortho}
\end{align}
For $m\ge 1$, we will denote $L^2_\gamma(\Dm; S^m(T^*\Dm))$ the space of symmetric tensors of order $m$ on $\Dm$, all of whose components belong to $L^2_\gamma(\Dm)$.

We will denote by $\star$ the Hodge star operator on one-forms, characterized by the relations
\begin{align}
    \star\d x = \d y, \quad \star\d y= - \d x.
    \label{eq:Hodge}
\end{align}

Finally, we denote $L^2_\gamma(\Dm)\cap \ker \dbar$ and $L^2_\gamma(\Dm)\cap \ker \partial$ the closed subspaces of $L^2_\gamma(\Dm)$ made of $L^2_\gamma(\Dm)$-integrable analytic and antianalytic functions on $\Dm^\circ$. Such spaces are also characterized in Lemma \ref{lem:charholomorphic} below.

\subsection{Forward mapping properties}

It was shown in \cite{Mishra2022} that the operator
$I_0d^\gamma : L_\gamma^2(\Dm) \to L_{\gamma,+}^2(\inward)$ is bounded and with infinite-dimensional co-kernel. Here we extend this analysis to the case of integrands defined on $S\Dm$, and show that this extension is continuous and surjective. Here and below, we denote the Beta function
\begin{align}
    B(x,y):= \int_0^1 s^{x-1}(1-s)^{y-1}\d s, \quad x,y>0.
    \label{eq:beta}
\end{align}

\begin{proposition}\label{prop:Ibounded}
    For every $\gamma>-1$, the operator
    \begin{align}
        Id^\gamma \colon L^2_\gamma(\SD) \to L^2_\gamma(\inward)
    \end{align}
    is bounded (with norm at most $2^{\gamma+1/2} \sqrt{B(\gamma+1,\gamma+1)}$) and surjective.    
\end{proposition}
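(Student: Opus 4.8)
The plan is to pass to geodesic flow coordinates, where both assertions reduce to a one‑dimensional weighted integral inequality controlled by the Beta function \eqref{eq:beta}. The map $(\beta,\alpha,t)\mapsto\varphi_t(\beta,\alpha)$ is a diffeomorphism from $\{(\beta,\alpha)\in\inward,\ 0\le t\le 2\cos\alpha\}$ onto $\SD$ minus the measure‑zero glancing set, and a direct Jacobian computation yields the Euclidean Santal\'o identity $\d x\,\d y\,\d\theta=\cos\alpha\,\d\beta\,\d\alpha\,\d t$. Together with \eqref{eq:d_along_flow}, i.e. $d(z_{\beta,\alpha}(t))=t(2\cos\alpha-t)$, this rewrites the domain norm as
\[
\|f\|_{L^2_\gamma(\SD)}^2=\int_\inward\int_0^{2\cos\alpha}|f(\varphi_t)|^2\,[t(2\cos\alpha-t)]^\gamma\,\cos\alpha\,\d t\,\d\beta\,\d\alpha,
\]
and exhibits $Id^\gamma f(\beta,\alpha)$ from \eqref{eq:Idgamma} as a weighted integral of $f$ along each geodesic.

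\emph{Boundedness.} Writing $L:=2\cos\alpha$, I would apply Cauchy--Schwarz to $Id^\gamma f(\beta,\alpha)=\int_0^L f(\varphi_t)[t(L-t)]^\gamma\,\d t$ using the constant‑in‑$t$ weight $\cos\alpha$, chosen precisely so the factor multiplying $|f|^2$ reproduces the integrand above:
\[
|Id^\gamma f(\beta,\alpha)|^2\le\Big(\int_0^L|f(\varphi_t)|^2[t(L-t)]^\gamma\cos\alpha\,\d t\Big)\Big(\tfrac1{\cos\alpha}\int_0^L[t(L-t)]^\gamma\,\d t\Big).
\]
The substitution $t=Ls$ evaluates the second factor as $\tfrac1{\cos\alpha}L^{2\gamma+1}B(\gamma+1,\gamma+1)=2L^{2\gamma}B(\gamma+1,\gamma+1)$, the convergence being exactly where $\gamma>-1$ is used. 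Multiplying by the data weight $\mu^{-2\gamma}=(\cos\alpha)^{-2\gamma}$ and integrating in $(\beta,\alpha)$, the scalar $(\cos\alpha)^{-2\gamma}L^{2\gamma}=2^{2\gamma}$ comes out as a constant while the surviving double integral is exactly $\|f\|_{L^2_\gamma(\SD)}^2$; this gives $\|Id^\gamma f\|_{L^2_\gamma(\inward)}^2\le 2^{2\gamma+1}B(\gamma+1,\gamma+1)\|f\|_{L^2_\gamma(\SD)}^2$, i.e. the stated norm bound $2^{\gamma+1/2}\sqrt{B(\gamma+1,\gamma+1)}$.

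\emph{Surjectivity.} Rather than argue abstractly, I would write down a bounded right inverse by spreading the data uniformly along geodesics. For $h\in L^2_\gamma(\inward)$, let $f$ be the function constant along each flow line, $f(\varphi_t(\beta,\alpha)):=h(\beta,\alpha)/\big(L^{2\gamma+1}B(\gamma+1,\gamma+1)\big)$, which is well defined and measurable on $\SD$ by the bijection above. The Beta integral of the previous step gives $Id^\gamma f=h$ immediately, and the same substitution shows $\|f\|_{L^2_\gamma(\SD)}^2=\big(2^{2\gamma+1}B(\gamma+1,\gamma+1)\big)^{-1}\|h\|_{L^2_\gamma(\inward)}^2<\infty$, so $f\in L^2_\gamma(\SD)$ and $Id^\gamma$ is onto. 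Note that this constant‑in‑$t$ profile is exactly the equality case of the Cauchy--Schwarz step, so the right inverse saturates the operator‑norm bound.

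The computations are elementary once the geometry is set up, and I expect the only delicate points to be bookkeeping: getting the Santal\'o Jacobian $\cos\alpha$ correct and reconciling it with the data weight $\mu^{-2\gamma}$ so that the powers of $\cos\alpha$ and $L$ collapse to the constant $2^{2\gamma}$, and verifying that prescribing $f$ through its flow‑coordinate values truly defines an element of $L^2_\gamma(\SD)$ --- which comes down to the flow map being a measure‑preserving bijection off a null set.
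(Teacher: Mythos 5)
Your proposal is correct and follows essentially the same route as the paper: Cauchy--Schwarz along each geodesic with the Beta-function evaluation $\int_0^L[t(L-t)]^\gamma\,\d t=L^{2\gamma+1}B(\gamma+1,\gamma+1)$ (giving the same constant $2^{\gamma+1/2}\sqrt{B(\gamma+1,\gamma+1)}$ after invoking Santal\'o's formula), and for surjectivity the same explicit bounded right inverse, since your function constant along flow lines, $h(\beta,\alpha)/\bigl(L^{2\gamma+1}B(\gamma+1,\gamma+1)\bigr)$, is exactly the paper's first integral $\bigl((h/\mu^{2\gamma+1})\circ\pi_F\bigr)/\bigl(2^{2\gamma+1}B(\gamma+1,\gamma+1)\bigr)$. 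The only cosmetic differences are that you work directly with $L^2$ functions in flow coordinates rather than estimating on $C^\infty(\SD)$ and extending by density, and you add the (true, but unneeded) remark that this right inverse saturates the Cauchy--Schwarz bound.
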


Below, rank-$m$ symmetric covariant tensor fields on $\Dm$ will be identified with certain functions on $\SD$ via the map
\begin{align}
    \ell_m \colon C^\infty(\Dm; S^m(T^*\Dm)) \to C^\infty(\SD), \qquad \ell_m f(z,v) \coloneq f_z(v,\dots,v),
    \label{eq:ellm}
\end{align}
where the argument $v$ of $f_z$ is repeated $m$ times. The map extends continuously to the setting $\ell_m \colon L^2_\gamma(\Dm; S^m(T^*\Dm)) \to L^2_\gamma(\SD)$. Combined with Proposition \ref{prop:Ibounded}, this gives the following.

\begin{corollary}\label{cor:Imbounded}
The integral operator $I_m d^\gamma \colon L^2_\gamma(\Dm; S^m(T^*\Dm))
	\to L^2_\gamma(\inward)$
defined by
\begin{align}
I_m d^\gamma f(\beta,\alpha) &\coloneq \int_0^{2\cos\alpha}
	\ell_m f(\gamma_{\beta,\alpha}(t),\dot\gamma_{\beta,\alpha}(t))d^\gamma(\gamma_{\beta,\alpha}(t))\d t, \qquad (\beta,\alpha)\in\inward
\end{align}
is bounded.
\end{corollary}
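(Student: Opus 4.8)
The plan is to factor $I_m d^\gamma$ through the scalar transform of Proposition~\ref{prop:Ibounded}, writing $I_m d^\gamma = I d^\gamma \circ \ell_m$, and then to show that $\ell_m$ is itself bounded between the indicated weighted spaces; the boundedness of $I_m d^\gamma$ is then immediate as a composition of bounded maps.

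First I would verify the factorization. Along a Euclidean geodesic the velocity is constant, so the lifted flow \eqref{eq:geoflow} has $\dot\gamma_{\beta,\alpha}(t)$ equal to the fixed direction $\theta=\beta+\pi+\alpha$ for all $t\in[0,2\cos\alpha]$. Hence $\ell_m f(\gamma_{\beta,\alpha}(t),\dot\gamma_{\beta,\alpha}(t)) = (\ell_m f)(\varphi_t(\beta,\alpha))$, and comparing the defining integrals of $I_m d^\gamma f$ with that of $I d^\gamma(\ell_m f)$ shows the two coincide. This is the only place the geometry enters, and it is essentially tautological.

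The substantive step is the continuity of $\ell_m$ as claimed in the paragraph preceding the corollary. At each $z$, writing $v=\cos\theta\,\partial_x+\sin\theta\,\partial_y$, the value $\ell_m f(z,v)=f_z(v,\dots,v)=\sum f_{i_1\cdots i_m}(z)\,v^{i_1}\cdots v^{i_m}$ (see \eqref{eq:ellm}) is a trigonometric polynomial in $\theta$ of degree at most $m$, whose coefficients are linear in the finitely many components of $f_z$. Since $|v|=1$, one has the pointwise bound $|\ell_m f(z,v)|\le C_m\|f_z\|$, where $\|f_z\|^2$ denotes the sum of squares of the components and $C_m$ depends only on $m$ (by equivalence of norms on the finite-dimensional space $S^m(T_z^*\Dm)$, or directly by Cauchy--Schwarz over the finitely many terms). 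Because this is an upper bound, no lower control is needed and the estimate is uniform in $z$.

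Integrating, and using that $\|f_z\|^2$ is independent of $\theta$,
\begin{align}
\|\ell_m f\|_{L^2_\gamma(\SD)}^2 = \int_{\SD} |\ell_m f|^2\, d^\gamma\, \d\Sigma^3 \le C_m \int_0^{2\pi}\!\!\int_\Dm \|f_z\|^2\, d^\gamma\, \d x\, \d y\, \d\theta = 2\pi C_m\, \|f\|_{L^2_\gamma(\Dm;S^m(T^*\Dm))}^2 ,
\end{align}
so $\ell_m$ is bounded, with norm at most $\sqrt{2\pi C_m}$. Combined with Proposition~\ref{prop:Ibounded}, the composition $I_m d^\gamma = I d^\gamma\circ\ell_m$ is bounded, with operator norm controlled by the product of the two constants. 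I do not expect a genuine obstacle: the weight $d^\gamma$ factors out of the fiber integral over $\theta$ and is merely carried along, and the only mild care needed is tracking the combinatorial constant $C_m$ arising from the number of independent tensor components in dimension two (namely $m+1$).
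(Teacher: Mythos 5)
Your proposal is correct and is essentially the paper's own argument: the paper states that $\ell_m$ extends continuously to $\ell_m\colon L^2_\gamma(\Dm;S^m(T^*\Dm))\to L^2_\gamma(\SD)$ and obtains the corollary by composing with Proposition \ref{prop:Ibounded}, exactly as you do, with your fiberwise estimate filling in the (unwritten) proof of that continuity. Only a trivial bookkeeping slip: with the pointwise bound $|\ell_m f(z,v)|\le C_m\|f_z\|$, the integrated inequality should carry $C_m^2$, so the operator norm bound is $C_m\sqrt{2\pi}$ rather than $\sqrt{2\pi C_m}$.
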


\subsection{Potential tensors}
We first discuss the appropriate description of the natural kernel of $I_m d^\gamma$. Let $H_0^{1,\gamma}(\Dm)$ denote the completion of $C_c^\infty(\Dm^\circ)$ with respect to the norm
\begin{align}
\|u\|_{H^{1,\gamma}(\Dm)}^2 \coloneq \|u\|_{L_\gamma^2(\Dm)}^2 + \|\nabla u\|_{L_\gamma^2(\Dm)}^2.
\label{eq:H01gamma}
\end{align}
For $m\ge 1$, define $H_0^{1,\gamma}(\Dm; S^m(T^* \Dm))$ the symmetric $m$-tensor fields all of whose components lie in $H_0^{1,\gamma}(\Dm)$. Recall the definition of the {\em inner derivative}
\begin{align}
\d^s = \sigma\circ\nabla \colon C^\infty(\Dm;S^m(T^*\Dm)) \to C^\infty(\Dm;S^{m+1}(T^*\Dm)),
\label{eq:ds}
\end{align}
where $\nabla\colon C^\infty(\Dm;S^m(T^*\Dm)) \to C^\infty(\Dm;S^{m+1}(T^*\Dm))$ denotes the total covariant derivative and $\sigma$ is the symmetrization operator. Then it is easy to establish that for any $m\ge 0$ and $\gamma\in (-1,1)$, the operator $\d^s$ extends to a bounded operator $\d^s \colon H_0^{1,\gamma}(\Dm; S^m(T^* \Dm)) \to L^2_\gamma (\Dm; S^{m+1}(T^* \Dm))$. Moreover, we have
\begin{lemma}\label{lem:kernel}
If $\gamma\in (-1,1)$, $m\ge 0$, and $q\in H^{1,-\gamma}_0(\Dm; S^m(T^*\Dm))$, then it follow that 
\[ I_m d^\gamma (d^{-\gamma}\d^s q)=0. \]
\end{lemma}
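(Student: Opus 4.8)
The plan is to reduce the claimed identity to the fundamental theorem of calculus along Euclidean geodesics, the crux being that the two weights $d^\gamma$ and $d^{-\gamma}$ cancel exactly along the flow. First I would observe that it suffices to prove the vanishing for $q$ in the dense subspace $C_c^\infty(\Dm^\circ; S^m(T^*\Dm))$, provided the map $q \mapsto I_{m+1}d^\gamma(d^{-\gamma}\d^s q)$ is bounded from $H^{1,-\gamma}_0(\Dm; S^m(T^*\Dm))$ into $L^2_\gamma(\inward)$; note here that $\d^s q$ is a symmetric $(m+1)$-tensor, so the relevant transform is $I_{m+1}d^\gamma$. This boundedness I would obtain by composing three continuous maps: the inner derivative $\d^s \colon H^{1,-\gamma}_0 \to L^2_{-\gamma}(\Dm; S^{m+1}(T^*\Dm))$ (as already noted in the excerpt), multiplication by $d^{-\gamma}$, which is an isometry $L^2_{-\gamma}(\Dm) \to L^2_\gamma(\Dm)$ since
\[ \int_\Dm |d^{-\gamma}h|^2\, d^\gamma\,|\d z|^2 = \int_\Dm |h|^2\, d^{-\gamma}\,|\d z|^2, \]
and finally the bounded transform $I_{m+1}d^\gamma$ of Corollary \ref{cor:Imbounded}. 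As all three are continuous, so is the composite, and the identity need only be checked for smooth compactly supported $q$.

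Second, for such $q$ I would invoke the standard commutation identity between the inner derivative and the geodesic vector field $X$ generating $\varphi_t$, namely $X(\ell_m q) = \ell_{m+1}(\d^s q)$; in the present straight-line setting this is simply $\frac{\d}{\d t}[(\ell_m q)(\varphi_t)] = (\ell_{m+1}\d^s q)(\varphi_t)$, since $\dot z_{\beta,\alpha}$ is constant along the flow. Because $d^{-\gamma}$ is a scalar function of the base point, $\ell_{m+1}(d^{-\gamma}\d^s q) = d^{-\gamma}\,\ell_{m+1}(\d^s q)$, so on inserting this into the definition of $I_{m+1}d^\gamma$ the integrand becomes
\[ \ell_{m+1}(d^{-\gamma}\d^s q)(\varphi_t)\, d^\gamma(z_{\beta,\alpha}(t)) = (\ell_{m+1}\d^s q)(\varphi_t) = \frac{\d}{\d t}\big[(\ell_m q)(\varphi_t(\beta,\alpha))\big], \]
where the two weights have cancelled. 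Integrating in $t$ over $[0,2\cos\alpha]$ then collapses the transform into a pure boundary term,
\[ I_{m+1}d^\gamma(d^{-\gamma}\d^s q)(\beta,\alpha) = (\ell_m q)(\varphi_{2\cos\alpha}(\beta,\alpha)) - (\ell_m q)(\varphi_0(\beta,\alpha)). \]

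Third, both $\varphi_0$ and $\varphi_{2\cos\alpha}$ lie on $\partial S\Dm$ — they are the entry and exit of the geodesic, where $d$ vanishes by \eqref{eq:d_along_flow} — so for $q\in C_c^\infty(\Dm^\circ)$ the function $\ell_m q$ vanishes near both and the right-hand side is $0$; the general case follows by density together with the continuity established in the first step. The main point to handle with care is precisely this interplay of low regularity and boundary behavior: although $d^{-\gamma}\d^s q$ may blow up at $\partial\Dm$, the composite map remains bounded into $L^2_\gamma(\inward)$, and it is this that legitimizes passing from smooth compactly supported $q$ to an arbitrary element of $H^{1,-\gamma}_0$. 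The conceptual heart of the argument is not a hard estimate but the exact cancellation $d^\gamma d^{-\gamma}=1$ along the flow, which converts the weighted transform of a potential tensor into a boundary contribution alone.
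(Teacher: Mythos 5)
Your proposal is correct and follows essentially the same route as the paper: reduce by density to $q\in C_c^\infty(\Dm^\circ;S^m(T^*\Dm))$ using boundedness of the composite $H^{1,-\gamma}_0 \xrightarrow{d^{-\gamma}\d^s} L^2_\gamma \xrightarrow{I_{m+1}d^\gamma} L^2_\gamma(\inward)$, then invoke the vanishing on potential tensors with compactly supported potential. The only difference is that the paper cites this last fact to \cite[Theorem 3.3.2]{Sharafudtinov1994}, whereas you unpack it via the intertwining $X\circ\ell_m=\ell_{m+1}\circ\d^s$ and the fundamental theorem of calculus along geodesics, which is exactly the standard argument behind the cited result.
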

\begin{proof}[Proof of Lemma \ref{lem:kernel}]
The result is well-known for $q\in C_c^\infty (\Dm^\circ,S^m (T^* \Dm^\circ))$ (see, e.g., \cite[Theorem 3.3.2]{Sharafudtinov1994}) and extends by density through the composition of bounded operators
\begin{align*}
    H^{1,-\gamma}_0(\Dm; S^m(T^*\Dm)) \xrightarrow{d^{-\gamma} \d^s}L^2_\gamma (\Dm; S^{m+1}(T^* \Dm)) \xrightarrow{I_m d^\gamma} L^2_\gamma (\partial_+ S\Dm).
\end{align*}
\end{proof}

\subsection{Tensor field decompositions}

On tensor fields we also define the symmetric product with the metric 
\begin{align}
    L \colon C^\infty(\Dm;S^m(T^*\Dm)) \to C^\infty(\Dm;S^{m+2}(T^*\Dm)), \qquad Lv \coloneq \sigma (g\otimes v),
    \label{eq:Lmap}    
\end{align}
with adjoint the {\em trace} map (e.g., with respect to the last two indices)
\begin{align}
\tr \colon C^\infty(\Dm;S^m(T^*\Dm)) \to C^\infty(\Dm;S^{m-2}(T^*\Dm)), \qquad m\ge 2,
\end{align}
the {\em divergence} $\delta \colon C^\infty(S^m(T^*\Dm)) \to C^\infty(S^{m-1}(T^*\Dm))$ as the formal adjoint of $-\d^s$ defined in \eqref{eq:ds}, which also takes the expression $\delta = \tr \nabla$. For an integer $m\ge 1$, let us define 
\begin{align}
    L^2_\gamma (\Dm; \Stt^m (T^* \Dm)) \coloneq \left\{f_m \d z^m + f_{-m} \d\zbar^m \;|\; f_m \in L^2_\gamma (\Dm)\cap \ker \dbar,\ f_{-m} \in L^2_\gamma (\Dm)\cap \ker\partial\right\}.
    \label{eq:Stt}
\end{align}
For $m\ge 2$, it can be seen that 
\begin{align}
    L^2_\gamma (\Dm; \Stt^m (T^* \Dm)) = \left\{ f\in L^2_\gamma (\Dm; S^m (T^* \Dm)),\ \delta f = 0,\ \tr f = 0\right\},
    \label{eq:Stt_correspondence} 
\end{align}
justifying the subscript ``tt'' which stands for ``transverse-tracefree''. We extend this notation to $m=1$, in a small abuse of notation, since 1-tensors do not have a well-defined trace, although this space can also be viewed as the space of $L^2_\gamma$-integrable harmonic one-forms. 

We begin with two decompositions of one-forms. 

\begin{lemma}\label{lem:oneformdecompSD}
(1) Any $w\in L^2_\gamma(\Dm; S^1(T^*\Dm))$ admits a unique decomposition of the form
\begin{align}
w = d^{-\gamma} \d g_0 + d^{-\gamma} {\star \d} g_s + \tilde{g}_1
\label{eq:oneform1}
\end{align}
where $g_0,g_s\in H^{1,-\gamma}_0(\mathbb{D})$
and $\tilde{g}_1 \in L^2(\Dm, \Stt^1(T^*\Dm))$.

(2) Any $w\in L^2_\gamma(\Dm; S^1(T^*\Dm))$ admits a unique decomposition of the form
\begin{align}
    w = d^{-\gamma} \d f + \tilde{w}_{-1}\d\zbar + \tilde{w}_1 \d z
    \label{eq:oneform2}
\end{align}
where $f\in H^{1,-\gamma}_0(\mathbb{D})$, $\tilde{w}_{1} \in L^2_\gamma(\Dm)$ and $\tilde{w}_{-1} \in L^2_\gamma(\Dm) \cap \ker \partial$.
\end{lemma}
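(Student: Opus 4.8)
The plan is to diagonalize both decompositions in the Wirtinger frame and reduce them to a single scalar decomposition together with its complex conjugate. Write a one-form as $w = w_1\,\d z + w_{-1}\,\d\zbar$ with $w_{\pm 1}\in L^2_\gamma(\Dm)$. Using $\d h = \partial h\,\d z + \dbar h\,\d\zbar$ together with the eigenrelations $\star\,\d z = -i\,\d z$ and $\star\,\d\zbar = i\,\d\zbar$ (immediate from \eqref{eq:Hodge}), the field $d^{-\gamma}\d g_0 + d^{-\gamma}\star\d g_s$ has $\d z$-component $d^{-\gamma}\partial(g_0 - ig_s)$ and $\d\zbar$-component $d^{-\gamma}\dbar(g_0 + ig_s)$. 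Thus, setting $p = g_0 - ig_s$ and $q = g_0 + ig_s$, claim (1) is equivalent to solving the two independent scalar problems $w_1 = d^{-\gamma}\partial p + f_1$ and $w_{-1} = d^{-\gamma}\dbar q + f_{-1}$ with $p,q\in H^{1,-\gamma}_0(\Dm)$, $f_1\in L^2_\gamma(\Dm)\cap\ker\dbar$ and $f_{-1}\in L^2_\gamma(\Dm)\cap\ker\partial$; one then recovers $g_0 = \tfrac12(p+q)$ and $g_s = \tfrac{i}{2}(p-q)$ in $H^{1,-\gamma}_0(\Dm)$, and $\tilde g_1 = f_1\,\d z + f_{-1}\,\d\zbar\in L^2_\gamma(\Dm;\Stt^1(T^*\Dm))$. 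For (2) the $\d\zbar$-component is exactly the second scalar problem, yielding $f$ and $\tilde w_{-1} := w_{-1} - d^{-\gamma}\dbar f\in\ker\partial$, whereas the $\d z$-component carries no constraint, so one simply sets $\tilde w_1 := w_1 - d^{-\gamma}\partial f\in L^2_\gamma(\Dm)$.

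Everything therefore rests on the scalar statement (and its conjugate under $z\mapsto\zbar$, which swaps $\partial\leftrightarrow\dbar$ and $\ker\dbar\leftrightarrow\ker\partial$ while preserving $d$, $L^2_\gamma(\Dm)$ and $H^{1,-\gamma}_0(\Dm)$): the operator $T\colon H^{1,-\gamma}_0(\Dm)\to L^2_\gamma(\Dm)$, $Tp := d^{-\gamma}\partial p$, is injective with closed range, and $L^2_\gamma(\Dm) = \operatorname{ran}T \stackrel{\perp}{\oplus}\bigl(L^2_\gamma(\Dm)\cap\ker\dbar\bigr)$. The orthogonality is the easy half: for $p\in C_c^\infty(\Dm^\circ)$ and $r\in L^2_\gamma(\Dm)\cap\ker\dbar$ the weight cancels and integration by parts gives $\langle d^{-\gamma}\partial p, r\rangle_{L^2_\gamma} = \int_\Dm\partial p\,\bar r = -\int_\Dm p\,\overline{\dbar r} = 0$, while the same computation shows that $s\perp\operatorname{ran}T$ forces $\dbar s = 0$ weakly. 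Hence $(\operatorname{ran}T)^\perp = L^2_\gamma(\Dm)\cap\ker\dbar$ is closed and $\overline{\operatorname{ran}T}$ is its orthocomplement; by density the orthogonality extends to all of $H^{1,-\gamma}_0(\Dm)$. It remains only to upgrade $\overline{\operatorname{ran}T}$ to $\operatorname{ran}T$, i.e. to produce a genuine potential $p\in H^{1,-\gamma}_0(\Dm)$ rather than a limit.

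The crux, and the step I expect to be the main obstacle, is the coercivity (closed-range) estimate $\|Tp\|_{L^2_\gamma(\Dm)} = \|\partial p\|_{L^2_{-\gamma}(\Dm)}\gtrsim\|p\|_{H^{1,-\gamma}(\Dm)}$ on $H^{1,-\gamma}_0(\Dm)$. In the unweighted case this is immediate from $\|\partial p\| = \|\dbar p\| = \tfrac12\|\nabla p\|$ and Poincar\'e, but the weight breaks the first identity. Integration by parts (with $\partial d = -\zbar$, $\dbar d = -z$ and $z\partial - \zbar\dbar = -i\partial_\theta$) yields the commutator identity
\begin{align}
\|\partial p\|^2_{L^2_{-\gamma}(\Dm)} - \|\dbar p\|^2_{L^2_{-\gamma}(\Dm)} = i\gamma\int_\Dm d^{-\gamma-1}\,\partial_\theta p\,\bar p,
\end{align}
whose right-hand side, after a Fourier decomposition $p = \sum_k p_k(r)e^{ik\theta}$, becomes $-2\pi\gamma\sum_k k\int_0^1 d^{-\gamma-1}|p_k|^2\,r\,\d r$. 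This lower-order term is absorbable: for each mode, the vanishing trace of $p_k$ and a one-dimensional weighted Hardy inequality near $r=1$ (of the schematic form $\int_0^\delta(1-r)^{-\gamma-1}|p_k|^2\,\d r\lesssim\delta\int_0^\delta(1-r)^{-\gamma}|p_k'|^2\,\d r$, with the two signs of $\gamma$ handled by the corresponding endpoint inequality) control it by a small fraction of the radial Dirichlet energy, while the interior is handled by the standard weighted Poincar\'e inequality. I expect this weighted Hardy/Poincar\'e bookkeeping, uniform in the angular mode, to be the bulk of the technical effort.

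With coercivity established, I would solve the degenerate elliptic problem by Lax--Milgram: the sesquilinear form $a(p,\phi) = \int_\Dm d^{-\gamma}\partial p\,\overline{\partial\phi}$ is bounded and coercive on $H^{1,-\gamma}_0(\Dm)$, so the weak equation $a(p,\phi) = \int_\Dm u\,\overline{\partial\phi}$ (i.e. $\dbar(d^{-\gamma}\partial p) = \dbar u$) has a unique solution $p$, and $r := u - d^{-\gamma}\partial p$ then lies in $L^2_\gamma(\Dm)\cap\ker\dbar$ by construction. Injectivity of $T$, hence uniqueness of $p$, follows because an antiholomorphic element of $H^{1,-\gamma}_0(\Dm)$ has vanishing boundary trace and is therefore zero, for which I would invoke the trace characterization underlying Lemma \ref{lem:charholomorphic}. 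Running this scalar solve on the $\d z$- and $\d\zbar$-components as organized in the first paragraph, and checking the (automatic) membership of the remainders, then delivers both decompositions, with uniqueness of each inherited from the uniqueness in the scalar problems together with the orthogonal splitting.
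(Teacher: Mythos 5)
Your reduction of both parts to a single scalar statement is, modulo notation, exactly what the paper does: the paper phrases the bookkeeping through $\ell_1$ and the Guillemin--Kazhdan operators $\eta_\pm$ together with the intertwining relations \eqref{eq:ellintertwiners}, while you use the eigenrelations $\star\d z=-i\,\d z$, $\star\d\zbar=i\,\d\zbar$; the resulting algebra (your $g_0=\tfrac12(p+q)$, $g_s=\tfrac{i}{2}(p-q)$, and $\tilde w_1=w_1-d^{-\gamma}\partial f$) agrees with the paper's proof of Lemma \ref{lem:oneformdecompSD}. The scalar statement you isolate is precisely the paper's Proposition \ref{prop:decomp}, which the paper proves before the lemma by the same scheme you outline: Riesz/Lax--Milgram for the form $(\partial v,\partial u)_{L^2_{-\gamma}}$, the weight-cancelling integration by parts for orthogonality, and elliptic regularity to identify the remainder as an element of $L^2_\gamma(\Dm)\cap\ker\dbar$. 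So the architecture is the same as the paper's; within the paper's logical structure, the proof of the lemma itself consists only of your first paragraph.

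The genuine gap is that your proposal never carries out its self-declared crux, the coercivity estimate $\|\partial p\|_{L^2_{-\gamma}(\Dm)}\gtrsim\|p\|_{H^{1,-\gamma}(\Dm)}$ on $H^{1,-\gamma}_0(\Dm)$: you derive the commutator identity (which is correct, as is its Fourier form) and then state that you \emph{expect} the Hardy/Poincar\'e bookkeeping to close uniformly in the angular mode. Since every other step is soft, this is where all the analytic content of the lemma sits, so the proof is incomplete. Two further remarks. First, the paper deduces coercivity from its weighted Poincar\'e inequality (Lemma \ref{lem:poincare}); note that for \emph{real-valued} $p$ this is not naive, because the weight is radial and your commutator term vanishes identically, $\int_\Dm d^{-\gamma-1}\partial_\theta p\cdot p = -\tfrac12\int_\Dm \partial_\theta\bigl(d^{-\gamma-1}\bigr)p^2=0$, so $\|\partial p\|_{L^2_{-\gamma}}=\|\dbar p\|_{L^2_{-\gamma}}$ survives the weight; your objection is genuine only for complex-valued $p$, which is indeed the case needed here since $\ker\dbar$ is a complex subspace. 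Second, your absorption scheme as sketched requires the absorbed fraction to be strictly less than one uniformly in $k$, and the natural Cauchy--Schwarz-plus-Hardy implementation produces constants of the type $4|\gamma|/(1+\gamma)^2$, which do not stay below one on all of $\gamma\in(-1,1)$ (they degenerate as $\gamma\to-1$), so the outline as written does not cover the full parameter range. A formulation that does close: granting Poincar\'e, coercivity is \emph{equivalent} to the one-sided bound $\|\dbar p\|_{L^2_{-\gamma}}\le C\|\partial p\|_{L^2_{-\gamma}}$ with an \emph{arbitrary} finite constant $C$; per mode this reads
\begin{align}
\int_0^1\Bigl|p_k'-\tfrac{k}{r}p_k\Bigr|^2 d^{-\gamma}r\,\d r\ \le\ C\int_0^1\Bigl|p_k'+\tfrac{k}{r}p_k\Bigr|^2 d^{-\gamma}r\,\d r,
\end{align}
the difference of the two sides being exactly your commutator term, and after substituting $w=r^{\mp k}p_k$ it becomes a weighted Hardy inequality (weights $r^{2|k|+1}d^{-\gamma-1}$ against $r^{2|k|+1}d^{-\gamma}$) whose Muckenhoupt constant is bounded uniformly in $k$. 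Working against the twisted energies rather than splitting off ``a small fraction of the radial Dirichlet energy'' is what removes the constant-less-than-one constraint; without this (or an equivalent device), the key step of your proposal does not go through as described.
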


Moving to higher-order tensor fields, the decomposition below is a generalization of the decomposition presented in \cite[Theorem 1.5]{Dairbekov2011} to tensor fields of more general smoothness classes at the boundary. 
\begin{theorem}[Tensor field decomposition] \label{thm:Yorkdecomp}
Fix $\gamma\in (-1,1)$ and $m\ge 2$. Then any tensor $f\in L^2_\gamma(\Dm; S^m(T^* \Dm))$ decomposes uniquely in the form
\begin{align}
    f = d^{-\gamma} \d^s q + L \lambda + \tilde{f},
    \label{eq:York}
\end{align}
where $q\in H^{1,-\gamma}_0(\Dm, S^{m-1}(T^* \Dm)) \cap \ker \tr$,
$\lambda\in L^2_\gamma(\Dm; S^{m-2}(T^* \Dm))$ and
$\tilde{f}\in L^2_\gamma(\Dm, \Stt^m(T^* \Dm))$.
\end{theorem}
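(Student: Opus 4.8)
The plan is to pass to the fiber-wise Fourier decomposition of $\ell_m f$ on $\SD$ and exploit that, in the complex frame given by symmetrized products of $\d z$ and $\d\zbar$, each of the three building blocks occupies a prescribed set of fiber Fourier modes. Writing $v=e^{i\theta}$, a symmetric $m$-tensor corresponds to a fiber trigonometric polynomial carrying the modes $-m,-m+2,\dots,m$; the transverse-tracefree tensors $\Stt^m$ are exactly the top modes $\pm m$ with holomorphic, resp.\ antiholomorphic, coefficient, the image $L(L^2_\gamma(\Dm;S^{m-2}(T^*\Dm)))$ consists exactly of the tensors whose $\pm m$ modes vanish, and $d^{-\gamma}\d^s q$ feeds the $\pm m$ and $\pm(m-2)$ modes through the identity $\ell_m(\d^s q)=X\ell_{m-1}(q)$ with $X=\eta_++\eta_-$. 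Under this dictionary the three subspaces are pairwise mode-compatible, and the whole decomposition decouples into a scalar problem on the top modes and a purely algebraic problem on the remaining ones.

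For uniqueness I would first record that $\Stt^m$ is $L^2_\gamma$-orthogonal to both the potential and the conformal pieces: indeed $\langle L\lambda,\tilde f\rangle_{L^2_\gamma}=\langle\lambda,\tr\tilde f\rangle_{L^2_\gamma}=0$ since $L^*=\tr$ and $\tr\tilde f=0$, while $\langle d^{-\gamma}\d^s q,\tilde f\rangle_{L^2_\gamma}=\int_\Dm\langle\d^s q,\tilde f\rangle_g\,\d x\d y=-\int_\Dm\langle q,\delta\tilde f\rangle_g\,\d x\d y=0$ because $\delta\tilde f=0$ and the boundary term vanishes by the same $C_c^\infty(\Dm^\circ)$-density argument used in Lemma \ref{lem:kernel}. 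Hence $\tilde f$ is forced to be the orthogonal projection of $f$ onto the closed subspace $L^2_\gamma(\Dm;\Stt^m(T^*\Dm))$ and is therefore unique. It then remains to see that $d^{-\gamma}\d^s q+L\lambda=0$ with $q\in H^{1,-\gamma}_0(\Dm;S^{m-1}(T^*\Dm))\cap\ker\tr$ forces $q=\lambda=0$; multiplying by $d^\gamma$ gives $\d^s q=-L(d^\gamma\lambda)$, which lies in the range of $L$ and so has vanishing $\pm m$ modes, whence by the mode computation above $\partial q_{m-1}=\dbar q_{-(m-1)}=0$, i.e.\ the two components of $q$ are antiholomorphic and holomorphic. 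A holomorphic function lying in $H^{1,-\gamma}_0(\Dm)$ has zero boundary trace and must vanish (Lemma \ref{lem:charholomorphic}), so $q=0$ and then $\lambda=0$ by injectivity of $L$.

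For existence I would let $\tilde f$ be the orthogonal projection of $f$ onto $L^2_\gamma(\Dm;\Stt^m(T^*\Dm))$ and set $g=f-\tilde f$, so that the $\pm m$ modes $\hat g_{\pm m}$ are precisely the top modes of $f$ with their holomorphic/antiholomorphic part removed. The trace-free potential is then produced by solving the two scalar inhomogeneous Cauchy--Riemann equations $\partial q_{m-1}=d^\gamma\hat g_m$ and $\dbar q_{-(m-1)}=d^\gamma\hat g_{-m}$ for $q_{\pm(m-1)}\in H^{1,-\gamma}_0(\Dm)$; the right-hand sides lie in $L^2_{-\gamma}(\Dm)$ (since $g\in L^2_\gamma$), and the solvability condition for these equations is exactly orthogonality of $\hat g_{\pm m}$ to $\ker\dbar$, resp.\ $\ker\partial$, in $L^2_\gamma(\Dm)$, which holds by construction of $g$. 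With $q$ in hand, the remainder $g-d^{-\gamma}\d^s q$ has vanishing $\pm m$ modes and therefore lies fiber-wise in the range of $L$; the algebraic trace splitting $S^m=L(S^{m-2})\oplus\ker\tr$, with $\tr L$ pointwise invertible, defines the unique $\lambda\in L^2_\gamma(\Dm;S^{m-2}(T^*\Dm))$ with $L\lambda=g-d^{-\gamma}\d^s q$, and then $f=d^{-\gamma}\d^s q+L\lambda+\tilde f$.

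The main obstacle is the scalar solvability step: establishing that $\partial$ and $\dbar$ map $H^{1,-\gamma}_0(\Dm)$ onto the closed subspace of $L^2_{-\gamma}(\Dm)$ cut out by orthogonality to the weighted holomorphic/antiholomorphic spaces, with the solvability condition matching exactly the $\Stt^m$-orthogonality defining $g$. This is a weighted closed-range statement for the Cauchy--Riemann operators in the $d^{\pm\gamma}$-topologies, and it is the place where the degenerate boundary weight genuinely enters; I expect to obtain it directly from the one-form decompositions of Lemma \ref{lem:oneformdecompSD} (which are precisely the $m=1$ incarnations of these scalar problems) together with the characterization of $L^2_\gamma$-holomorphic functions in Lemma \ref{lem:charholomorphic}, taking care that the boundary behavior encoded by $H^{1,-\gamma}_0(\Dm)$ legitimizes all integrations by parts.
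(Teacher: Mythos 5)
Your proposal is correct and is essentially the paper's own argument: both pass to fiberwise Fourier modes, apply the scalar weighted decomposition $L^2_\gamma(\Dm) = d^{-\gamma}\partial\bigl(H^{1,-\gamma}_0(\Dm)\bigr) \stackrel{\perp}{\oplus} \bigl(L^2_\gamma(\Dm)\cap\ker\dbar\bigr)$ to the extreme modes $f_{\pm m}$ (the paper packages this as Proposition \ref{prop:decomp}; you correctly identify that it can be extracted from the one-form decomposition of Lemma \ref{lem:oneformdecompSD}, which is logically equivalent), and absorb the resulting $\pm(m-2)$-mode correction into $L\lambda$; in fact your explicit uniqueness/orthogonality argument is more complete than the paper's written proof, which only carries out the construction. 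One repair is needed: Lemma \ref{lem:charholomorphic} does not prove that a holomorphic (or antiholomorphic) function in $H^{1,-\gamma}_0(\Dm)$ vanishes --- it only characterizes Taylor coefficients of $L^2_\gamma$-holomorphic functions and says nothing about boundary behavior, and for $\gamma<0$ the embedding $H^{1,-\gamma}_0\subset H^1_0$ fails, so the claim is not classical; the correct justification within your toolkit is the uniqueness clause of Lemma \ref{lem:oneformdecompSD}(1) (if $\partial q=0$ then $\star\d q = i\,\d q$, so $0 = d^{-\gamma}\d q + d^{-\gamma}\star\d (iq) + 0$ and $0=0+0+0$ are two decompositions of the zero one-form, forcing $q=0$), or equivalently the coercivity of the Dirichlet form underlying Proposition \ref{prop:decomp}.
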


By iterating Theorem \ref{thm:Yorkdecomp} and using decomposition \eqref{eq:oneform1} as base case when $m=1$, tensor fields of any order can be decomposed as follows.

\begin{corollary}\label{cor:decomposition} Fix $m\ge 1$. Then any $f\in L^2_\gamma(\Dm; S^{m}(T^*\Dm))$ uniquely decomposes in the form 
\[  f = d^{-\gamma}\d^s q + \fitt, \qquad q\in H^{1,-\gamma}_0(\Dm; S^{m-1}(T^*\Dm)),  \]
and where $\fitt\in L^2_\gamma(\Dm; S^{m}(T^*\Dm))$ takes the following form: 
\begin{itemize}
    \item If $m = 2p$ where $p>0$, then for $1\le j\le p$,
    \begin{align}
        \fitt = \sum_{j=0}^p L^{p-j} \tilde{f}_{2j}, \quad \tilde{f}_0\in L^2_\gamma(\Dm),\ \tilde{f}_{2j}\in L^2_\gamma(\Dm; \Stt^{2j}(T^*\Dm))
        \label{eq:fitt_even}
    \end{align}
    Moreover, $I_{2p}d^\gamma f = I_{2p} d^\gamma \fitt = \sum_{j=0}^p I_{2j} d^\gamma \tilde{f}_{2j}$.
    \item If $m=2p+1$ where $p\ge 0$, then for $0\le j\le p$,
    \begin{align}
        \fitt = d^{-\gamma}{\star\d} h + \sum_{j=0}^{p}L^{p-j} \tilde{f}_{2j+1}, \quad h\in H^{1,-\gamma}_0(\Dm),\ \tilde{f}_{2j+1}\in L^2_\gamma(\Dm; \Stt^{2j+1}(T^*\Dm)).
        \label{eq:fitt_odd}
    \end{align}
    Moreover, $I_{2p+1}d^\gamma f = I_{2p+1} d^\gamma \fitt = I_1 (\star \d h) + \sum_{j=0}^p I_{2j+1} d^\gamma \tilde{f}_{2j+1}$.   
\end{itemize}
\end{corollary}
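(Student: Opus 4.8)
The plan is to obtain Corollary \ref{cor:decomposition} by iterating the decomposition of Theorem \ref{thm:Yorkdecomp}, using the one-form result Lemma \ref{lem:oneformdecompSD}(1) as the base case, and then reading off the X-ray transform formulas from the kernel characterization in Lemma \ref{lem:kernel}. First I would separate the even and odd cases and argue by downward induction on the order of the transverse-tracefree pieces. Given $f\in L^2_\gamma(\Dm; S^m(T^*\Dm))$ with $m\ge 2$, apply Theorem \ref{thm:Yorkdecomp} to write $f = d^{-\gamma}\d^s q_m + L\lambda_{m-2} + \tilde f_m$, where $q_m\in H^{1,-\gamma}_0(\Dm; S^{m-1}(T^*\Dm))\cap\ker\tr$, $\tilde f_m\in L^2_\gamma(\Dm;\Stt^m(T^*\Dm))$, and $\lambda_{m-2}\in L^2_\gamma(\Dm; S^{m-2}(T^*\Dm))$. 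The tensor $\lambda_{m-2}$ is again an arbitrary symmetric $L^2_\gamma$-tensor of order $m-2$, so I reapply Theorem \ref{thm:Yorkdecomp} to it (or Lemma \ref{lem:oneformdecompSD} once the order drops to $1$, or nothing further when the order reaches $0$). At each stage the potential part $d^{-\gamma}\d^s(\cdot)$ is produced, the $L$-multiple of the next $\lambda$ is fed back into the recursion, and the transverse-tracefree remainder $\tilde f_{2j}$ (resp. $\tilde f_{2j+1}$) is harvested. Collecting the accumulated $L$-powers gives $\fitt = \sum_{j} L^{p-j}\tilde f_{2j}$ in the even case and the analogous sum plus the $d^{-\gamma}\star\d h$ term in the odd case, where that last term is exactly the base-case output of Lemma \ref{lem:oneformdecompSD}(1) on the final one-form.

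The subtlety to handle carefully is how the potential parts from different stages of the iteration combine into the single potential $d^{-\gamma}\d^s q$ advertised in the statement. At stage $k$ one obtains a term $L^{p-j}\,d^{-\gamma}\d^s q_{m-2j}$ with $q_{m-2j}$ trace-free, and these must be reassembled into one term $d^{-\gamma}\d^s q$ of the full order $m$. Here I would invoke the commutation/intertwining relation between $L$ and the inner derivative $\d^s$: since $\nabla g = 0$ for the Euclidean metric, $L$ commutes with $\d^s$ up to lower-order symmetrization bookkeeping, so $L^{p-j} d^{-\gamma}\d^s q_{m-2j} = d^{-\gamma}\d^s (L^{p-j} q_{m-2j})$, and moreover $L$ maps $H^{1,-\gamma}_0$ into $H^{1,-\gamma}_0$ boundedly. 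Summing the pieces $L^{p-j} q_{m-2j}$ (all of which lie in $H^{1,-\gamma}_0(\Dm; S^{m-1}(T^*\Dm))$ after applying $L$) yields the single potential $q$. I would note that $q$ in the corollary is \emph{not} required to be trace-free, which is precisely what allows the various trace-free $q_{m-2j}$ from each stage to be packaged together without a constraint.

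For the transform identities, the essential input is Lemma \ref{lem:kernel}: the potential part $d^{-\gamma}\d^s q$ lies in the kernel of $I_m d^\gamma$, so $I_m d^\gamma f = I_m d^\gamma \fitt$ immediately. To split $\fitt$ further, I would use that $I_m d^\gamma (L\mu)$ reduces to the lower-order transform $I_{m-2} d^\gamma \mu$ on the level of the line integrals: along any unit-speed geodesic, $\ell_m(L\mu)(z,v) = (g_z(v,v))\,\ell_{m-2}\mu(z,v) = \ell_{m-2}\mu(z,v)$ since $|v|_g = 1$, so applying $\ell_m$ to $L^{p-j}\tilde f_{2j}$ and integrating gives exactly $I_{2j} d^\gamma \tilde f_{2j}$. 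Iterating this observation termwise over the sum produces $I_{2p}d^\gamma f = \sum_{j=0}^p I_{2j} d^\gamma \tilde f_{2j}$ in the even case, and the analogous formula with the extra $I_1(\star\d h)$ contribution in the odd case (here $I_1$ is already $d^\gamma$-weighted through the convention, matching the statement).

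The main obstacle I expect is the bookkeeping in the induction that guarantees \emph{uniqueness} of the full decomposition, not merely existence. Uniqueness at each isolated stage follows from Theorem \ref{thm:Yorkdecomp} and Lemma \ref{lem:oneformdecompSD}, but to conclude uniqueness of the assembled form I must verify that the reassembly map $(q_m, q_{m-2}, \dots) \mapsto q = \sum_j L^{p-j} q_{m-2j}$ is injective on the relevant product of trace-free spaces, equivalently that the $L$-powers acting on trace-free tensors of descending orders have independent ranges inside $S^{m-1}(T^*\Dm)$. This is where the trace-free constraint in Theorem \ref{thm:Yorkdecomp} is crucial: the standard fact that $\ker\tr$ and the image of $L$ are complementary (the algebraic Lefschetz-type decomposition of symmetric tensors) ensures the powers $L^{p-j}\tilde f_{2j}$ occupy algebraically independent isotypic components, so the sum representation is unique. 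I would make this the technical heart of the argument and keep the geodesic and density estimates routine, citing the boundedness of $L$ and $\d^s$ on the weighted spaces already established before the statement.
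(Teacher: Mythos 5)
Your proposal follows essentially the same route as the paper's proof: induct on the tensor order by applying Theorem \ref{thm:Yorkdecomp} at each stage with Lemma \ref{lem:oneformdecompSD} as base case, and merge the successive potential terms using the commutation $L\circ(d^{-\gamma}\d^s) = (d^{-\gamma}\d^s)\circ L$. You additionally spell out two points the paper's written proof leaves implicit — the transform identities, obtained from Lemma \ref{lem:kernel} and the pointwise reduction $\ell_m(L\mu)=\ell_{m-2}\mu$ on $S\Dm$, and the uniqueness of the assembled decomposition via the algebraic splitting of symmetric tensors into $\ker\tr$ plus the image of $L$ — and both of these are correct.
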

We say an $m$-tensor $f\in L^2_\gamma(\Dm;S^m(T^*\Dm))$ is {\em in the iterated-tt gauge} if $f$ is as in the previous corollary and $q=0$.

\subsection{Data space decomposition}
Now we discuss where in data space $L^2_\gamma(\inward)$ the $d^\gamma$-weighted X-ray transform sends components of the decomposition in Corollary \ref{cor:decomposition}. First observe that, relative to the orthogonal splitting \eqref{eq:ortho}, the range of $I_m d^\gamma$ is a subset of $L^2_{\gamma,+}(\inward)$ or $L^2_{\gamma,-}(\inward)$ if $m$ is even or odd, respectively.

From \cite{Mishra2022}, an orthonormal basis of $L^2_{\gamma,+}(\inward)$ is given by the weighted \emph{fan-beam polynomials}
\begin{align}
&\psi_{n,k}^\gamma \coloneq \mu^{2\gamma+1}e^{(n-2k)i(\beta+\alpha+\pi)} 
\hat L_n^\gamma(\sin\alpha)/2\pi, \qquad n\ge 0,\ k\in \Zm, \label{def:psink}
\end{align}
where $\hat L_n^\gamma\colon [-1,1]\to \Rm$ is the orthogonal polynomial of degree $n$ with respect to the weight $(1-x^2)^{\gamma+1/2}$. In Appendix \ref{sec:normalization}, a normalization is chosen for $\hat L_n^\gamma$ such that $\|\psi_{n,k}^\gamma\|_{L^2_\gamma(\inward)}=1$. The isometry 
\[ L^2_{\gamma,+}(\inward) \to L^2_{\gamma,-}(\inward), \quad u\mapsto e^{i(\beta+\alpha+\pi)}u  \]
shows that $\{e^{i(\beta+\alpha+\pi)} \psi_{n,k}^\gamma\}_{n\ge 0, k\in \Zm}$ is an orthonormal basis of $L^2_{\gamma,-}(\inward)$. Following \cite[Equation (4.34)]{BohrNickl2021}, it is convenient to define the basis functions 
\begin{align}
    \psi_{n,k}^{\gamma,+} \coloneq \psi_{n,k}^\gamma, \qquad \psi_{n,k}^{\gamma,-} \coloneq e^{i(\beta+\alpha+\pi)}\psi_{n,k}^\gamma, \quad n\ge 0,\ k\in \Zm.
    \label{eq:psinkpm}    
\end{align}

Mapping properties of $I_0d^\gamma$ and estimates for its spectral action from \cite[Theorem 1 and Section 5.2]{Mishra2022} indicate that
\begin{align}
    \begin{split}
        I_0d^\gamma(L^2_\gamma(\Dm)) &= \left\{ \sum_{n=0}^\infty \sum_{k=0}^n \sigma_{n,k}^\gamma a_{n,k} \psi_{n,k}^{\gamma,+}, \quad \sum_{n=0}^\infty \sum_{k=0}^n |a_{n,k}|^2 <\infty \right\}, \\
        \text{where}\quad (\sigma_{n,k}^\gamma)^2 &:= 2^{2\gamma+2}\pi\binom{n}{k}\frac{(n-k+\gamma)! (k+\gamma)!}{(n+2\gamma+1)!},
    \end{split}    
    \label{eq:I0range}
\end{align}
where for $x>-1$, $x!:= \Gamma(x+1)$ and where $\Gamma$ denotes Euler's Gamma function. Such singular values all tend to zero as $n\to \infty$ (see, e.g., \cite[Sec. 5.2]{Mishra2022}), hence taking the $L^2_\gamma$-closure, we obtain the subspace of $L^2_{\gamma,+}(\inward)$
\begin{align}
\overline{I_0 d^\gamma (L_\gamma^2(\Dm))}^{L^2_\gamma(\inward)} &= \text{span}_{L^2_\gamma} \left\{ \psi_{n,k}^{\gamma,+}\;|\; n\ge 0,\ 0\le k\le n\right\}. 
\label{eq:I0rangeclosure}
\end{align}

To describe the higher-order components of $d^\gamma$-weighted X-ray data, define the following. 
\begin{definition}[Hilbert Scales]
    For a Hilbert space $H$, an orthonormal family $\{f_{n}\}_{n\ge 0}$ in $H$ and $\alpha\in [0,\infty)$, define
    \begin{align}
        h^\alpha (f_{n},\ n\ge 0) \coloneq \left\{ \sum_{n=0}^\infty a_{n} f_{n} \;|\; \sum_{n=0}^\infty (n+1)^{2\alpha} |a_{n}|^2 < \infty \right\}.
    \end{align}
    If $\alpha = 0$, we write $h^0 = \ell^2$.
\end{definition}

\begin{lemma}\label{lem:homeomorphisms}
Fix $\gamma>-1$. Then the following operators are homeomorphisms: 
\begin{align}
    \begin{split}
        I_{2p} d^\gamma &\colon L^2_\gamma(\Dm, \Stt^{2p}(T^*\Dm)) \to h^{(1+\gamma)/2}(\psi_{n,-p}^{\gamma,+}, n\ge 0) \oplus h^{(1+\gamma)/2}(\psi_{n,n+p}^{\gamma,+}, n\ge 0), \quad p\ge 1, \\
        I_{2p+1} d^\gamma &\colon L^2_\gamma(\Dm, \Stt^{2p+1}(T^*\Dm)) \to h^{(1+\gamma)/2} (\psi_{n,-p}^{\gamma,-}, n\ge 0) \oplus h^{(1+\gamma)/2} (\psi_{n,n+p+1}^{\gamma,-}, n\ge 0), \quad p\ge 0.     
    \end{split}
    \label{eq:homeos}   
\end{align}
\end{lemma}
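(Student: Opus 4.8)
The plan is to reduce $I_m d^\gamma$ on $\Stt$-tensors to the scalar transform $I_0 d^\gamma$, whose range and spectral behavior are given by \eqref{eq:I0range}--\eqref{eq:I0rangeclosure}. By \eqref{eq:Stt} a tensor $f\in L^2_\gamma(\Dm;\Stt^m(T^*\Dm))$ reads $f = f_m\,\d z^m + f_{-m}\,\d\zbar^m$ with $f_m\in L^2_\gamma(\Dm)\cap\ker\dbar$ and $f_{-m}\in L^2_\gamma(\Dm)\cap\ker\partial$. Since $\d z(v)=e^{i\theta}$ in the chart \eqref{eq:SDchart}, we have $\ell_m f = f_m e^{im\theta} + f_{-m}e^{-im\theta}$, and as $\theta=\beta+\pi+\alpha$ is constant along each geodesic I would factor
\begin{align}
I_m d^\gamma f = e^{im(\beta+\pi+\alpha)}\,I_0 d^\gamma f_m + e^{-im(\beta+\pi+\alpha)}\,I_0 d^\gamma f_{-m}.
\end{align}
The monomials $\{z^N\}_{N\ge 0}$ and $\{\zbar^N\}_{N\ge0}$ are orthogonal bases of the two spaces above, with $\|z^N\|_{L^2_\gamma}^2=\|\zbar^N\|_{L^2_\gamma}^2=\pi B(N+1,\gamma+1)=:c_N^2$, so everything reduces to computing $I_0 d^\gamma z^N$, $I_0 d^\gamma\zbar^N$ and applying the two frequency shifts.

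The key point is that each monomial maps to a single basis function. Writing $z_{\beta,\alpha}(t)^N=e^{iN\beta}(1-te^{i\alpha})^N$, using \eqref{eq:d_along_flow} and substituting $t=2s\cos\alpha$, I would obtain $I_0 d^\gamma z^N = e^{iN\beta}(2\cos\alpha)^{2\gamma+1}e^{iN\alpha}P_N(\sin\alpha)$, where $P_N$ is a polynomial of degree $N$ whose coefficients are the Beta values $B(j+\gamma+1,N-j+\gamma+1)$, $0\le j\le N$. Comparing the prefactor with \eqref{def:psink}, $I_0 d^\gamma z^N$ has $\theta$-frequency exactly $N$, so by the range characterization \eqref{eq:I0rangeclosure} it lies in the closed span of $\{\psi_{N+2k,k}^{\gamma,+}:k\ge 0\}$; but these functions carry Gegenbauer factors $\hat L_{N+2k}^\gamma(\sin\alpha)$ of degree $N+2k$, while after removing the common prefactor $I_0 d^\gamma z^N$ is a polynomial in $\sin\alpha$ of degree at most $N$. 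Expanding in the orthogonal family $\{\hat L_n^\gamma\}$ therefore forces every component of degree $>N$ to vanish, leaving $I_0 d^\gamma z^N=\rho_N\psi_{N,0}^{\gamma,+}$ for a single scalar $\rho_N$; conjugating gives $I_0 d^\gamma\zbar^N=\bar\rho_N\psi_{N,N}^{\gamma,+}$. Identifying $z^N/c_N$ with the singular function attached to $\psi_{N,0}^{\gamma,+}$ (the only element of $L^2_\gamma(\Dm)\cap\ker\dbar$ of angular frequency $N$) yields $\rho_N=c_N\sigma_{N,0}^\gamma$ with $\sigma_{N,0}^\gamma$ as in \eqref{eq:I0range}.

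It then remains to apply the frequency shifts and compare norms. Multiplication by $e^{2pi(\beta+\pi+\alpha)}$ sends $\psi_{n,k}^\gamma\mapsto\psi_{n,k-p}^\gamma$ (same Gegenbauer degree, up to a unit factor), while an extra factor $e^{\pm i(\beta+\pi+\alpha)}$ converts $\psi^{\gamma,+}$ into $\psi^{\gamma,-}$ via \eqref{eq:psinkpm}. Thus for $m=2p$ I get $I_{2p}d^\gamma(z^N\d z^{2p})=\rho_N\psi_{N,-p}^{\gamma,+}$ and $I_{2p}d^\gamma(\zbar^N\d\zbar^{2p})=\bar\rho_N\psi_{N,N+p}^{\gamma,+}$, and for $m=2p+1$ the analogous computation produces $\psi_{N,-p}^{\gamma,-}$ and $\psi_{N,N+p+1}^{\gamma,-}$, matching the two summands of \eqref{eq:homeos}; the two index families have disjoint $k$-values, so the images are orthogonal and $I_m d^\gamma$ is diagonal in these bases. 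Finally the domain weights are $c_N^2$ and the target $h^{(1+\gamma)/2}$-weights are $(N+1)^{1+\gamma}|\rho_N|^2$, so the per-mode ratio equals $(N+1)^{1+\gamma}(\sigma_{N,0}^\gamma)^2$; a Stirling estimate on \eqref{eq:I0range} gives $(\sigma_{N,0}^\gamma)^2\asymp(N+1)^{-(1+\gamma)}$, so this ratio is bounded above and below by positive constants, making $I_m d^\gamma$ a bounded bijection with bounded inverse onto the stated space. I expect the main obstacle to be the crux identity of the second paragraph — showing $I_0 d^\gamma z^N$ collapses to the single mode $\psi_{N,0}^{\gamma,+}$ — which hinges on combining the explicit degree count of the $\alpha$-polynomial with the range characterization to eliminate the higher Gegenbauer components; the closing asymptotics are routine.
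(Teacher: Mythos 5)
Your proposal is correct and follows essentially the same route as the paper: reduce to the scalar transform via the phase shift $e^{\pm im(\beta+\alpha+\pi)}$ (the paper's Lemma \ref{lem:xrayshift}), observe that $I_0 d^\gamma$ acts diagonally on the analytic/antianalytic monomials (which are exactly the Zernike functions $Z_{N,0}^\gamma\propto z^N$, $Z_{N,N}^\gamma\propto\zbar^N$), and compare weights using $(\sigma_{N,0}^\gamma)^2\asymp (N+1)^{-(1+\gamma)}$ together with $\|z^N\|^2_{L^2_\gamma}\asymp(N+1)^{-(1+\gamma)}$ and disjointness of the index families. The only deviation is that you re-derive the single-mode collapse $I_0 d^\gamma z^N=\rho_N\psi_{N,0}^{\gamma,+}$ by explicit computation and a degree count against the range characterization, whereas the paper simply reads this off from the SVD relation \eqref{eq:SVDI0} of \cite{Mishra2022}; your identification $|\rho_N|=c_N\sigma_{N,0}^\gamma$ then invokes that same SVD anyway, so nothing is gained or lost in terms of dependence on prior results.
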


As $p$ varies, the ranges above span over disjoint sets of fan-beam polynomial indices, showing their orthogonality. Together with \eqref{eq:I0range} and \eqref{eq:I1span} below, this induces an orthogonal decomposition of the range of $I_md^\gamma$ for any $m$.
\begin{theorem}[Orthogonal decomposition of the range]\label{thm:rangedecomp}
Let $m$ be a positive integer. If $m=2p$ for $p\ge 1$, 
then the range of $I_m d^\gamma$ decomposes orthogonally
\begin{align}
    I_md^\gamma(L^2_\gamma(\Dm; S^m(T^* \Dm))) = I_0 d^\gamma (L^2_\gamma(\Dm)) \stackrel{\perp}{\oplus} \bigoplus_{j=1}^{p} I_{2j} d^\gamma (L^2_\gamma(\Dm; \Stt^{2j}(T^*\Dm))).
    \label{eq:evendecomposition}
\end{align}
If $m=2p+1$ for $p\ge 0$, then the range of $I_md^\gamma$ decomposes orthogonally
\begin{align}
    I_m d^\gamma(L^2_\gamma(\Dm; S^m(T^* \Dm))) = I_1(\star\d ( H_0^{1,-\gamma}(\Dm))) \stackrel{\perp}{\oplus} \bigoplus_{j=0}^{p} I_{2j+1}d^\gamma (L^2_\gamma(\Dm; \Stt^{2j+1}(T^*\Dm))),
    \label{eq:odddecomposition}
\end{align}
where 
\begin{align}
    \overline{I_1(\star\d ( H_0^{1,-\gamma}(\Dm)))}^{L^2_\gamma(\inward)} = \operatorname{span}_{L^2} \{ \psi_{n,k}^{\gamma,-} \;|\; n\geq 1, 1\leq k\leq n \}.
    \label{eq:I1span}
\end{align}
Figures \ref{fig:Ievenrange} and \ref{fig:Ioddrange} depict the decomposition.
\end{theorem}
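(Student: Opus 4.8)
The plan is to deduce the theorem from Corollary \ref{cor:decomposition}, the homeomorphisms of Lemma \ref{lem:homeomorphisms}, equation \eqref{eq:I0range}, and the (still-to-be-proved) identity \eqref{eq:I1span}, reducing everything to a disjointness check on fan-beam indices. Fix $m$ and take $f\in L^2_\gamma(\Dm;S^m(T^*\Dm))$. By Corollary \ref{cor:decomposition}, $f=d^{-\gamma}\d^s q+\fitt$; the potential part is annihilated by Lemma \ref{lem:kernel}, so $I_md^\gamma f=I_md^\gamma\fitt$ splits as a sum of the X-ray transforms of the iterated-tt building blocks, namely $\sum_{j=0}^{p}I_{2j}d^\gamma\tilde f_{2j}$ in the even case \eqref{eq:fitt_even} and $I_1(\star\d h)+\sum_{j=0}^{p}I_{2j+1}d^\gamma\tilde f_{2j+1}$ in the odd case \eqref{eq:fitt_odd}. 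Because the decomposition of Corollary \ref{cor:decomposition} is a bijection onto its components, the data $\tilde f_{2j}$ (resp. $h$ and $\tilde f_{2j+1}$) range independently over their respective spaces; hence the image of $I_md^\gamma$ equals the Minkowski sum of the images of these component operators. Recalling that this image lies in $L^2_{\gamma,+}(\inward)$ for $m$ even and in $L^2_{\gamma,-}(\inward)$ for $m$ odd, it remains only to identify each component image and to show the summands are mutually orthogonal, which upgrades the sum to the claimed orthogonal direct sum.

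For the identification I invoke \eqref{eq:homeos}, \eqref{eq:I0range}, and \eqref{eq:I1span} to assign fan-beam indices to each piece: for $j\ge 1$, $I_{2j}d^\gamma(L^2_\gamma(\Dm;\Stt^{2j}(T^*\Dm)))$ occupies the $+$-indices $\{(n,-j):n\ge0\}\cup\{(n,n+j):n\ge0\}$; for $j\ge0$, $I_{2j+1}d^\gamma(L^2_\gamma(\Dm;\Stt^{2j+1}(T^*\Dm)))$ occupies the $-$-indices $\{(n,-j):n\ge0\}\cup\{(n,n+j+1):n\ge0\}$; the even base piece $I_0d^\gamma(L^2_\gamma(\Dm))$ occupies $\{(n,k):0\le k\le n\}$ in the $+$ basis, and the odd base piece $I_1(\star\d H_0^{1,-\gamma}(\Dm))$ occupies $\{(n,k):n\ge1,\,1\le k\le n\}$ in the $-$ basis. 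Disjointness follows from three inequalities governing $k$: a ``left'' family has $k=-j$, hence $k<0$ (even, $j\ge1$) or $k\le0$ (odd); a ``right'' family has $k=n+j$ or $k=n+j+1$, hence $k>n$; and the base indices satisfy $0\le k\le n$ (even) or $1\le k\le n$ (odd). Thus left, base, and right ranges never overlap, distinct left families are separated by the value of $-j$, and distinct right families by the implication $(n,n+j)=(n',n'+j')\Rightarrow n=n',\ j=j'$. Since $\{\psi_{n,k}^{\gamma,+}\}$ and $\{\psi_{n,k}^{\gamma,-}\}$ are orthonormal, subspaces supported on disjoint index sets are orthogonal, so the sum from the first paragraph is an orthogonal direct sum, giving \eqref{eq:evendecomposition} and \eqref{eq:odddecomposition} respectively.

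The one genuinely new ingredient, and the main obstacle, is \eqref{eq:I1span}: showing that the co-exact one-form part $I_1(\star\d H_0^{1,-\gamma}(\Dm))$ lands precisely in the closed span of $\{\psi_{n,k}^{\gamma,-}:n\ge1,\,1\le k\le n\}$. My approach is to evaluate $I_1(\star\d h)=I_1d^\gamma(d^{-\gamma}\star\d h)$ directly on a complete family of $H_0^{1,-\gamma}(\Dm)$ — the weighted disk polynomials underlying the singular value analysis of \cite{Mishra2022} — using the fan-beam integral \eqref{eq:Idgamma}. The weight $d^\gamma$ cancels $d^{-\gamma}$, so this is the \emph{unweighted} transform of $\star\d h$; using $\star\d z=-i\,\d z$ and $\star\d\zbar=i\,\d\zbar$ one finds the integrand is a pure transverse derivative, i.e. $I_1(\star\d h)=-\partial_s I_0 h$ in parallel-beam coordinates. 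One then checks that each basis image is a nonzero scalar multiple of a single $\psi_{n,k}^{\gamma,-}$ and reads off that the occupied indices fill exactly the triangle $\{n\ge1,\,1\le k\le n\}$; since the multipliers are nonzero and decay, the image is dense in the corresponding closed span.

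The delicate points concentrated in this last step are the precise index range — in particular the exclusion of the boundary columns $k=0$ and $k=n+1$, which belong instead to the $\Stt^1$ piece — together with the completeness of the chosen family in $H_0^{1,-\gamma}(\Dm)$ and the nonvanishing and decay of the multipliers. Once \eqref{eq:I1span} is in hand, the remainder of the theorem is purely bookkeeping layered on top of Corollary \ref{cor:decomposition} and Lemma \ref{lem:homeomorphisms}.
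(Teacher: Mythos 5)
Your first two paragraphs are essentially the paper's own proof: the range splits via Corollary \ref{cor:decomposition} (with Lemma \ref{lem:kernel} killing the potential part), each tt block lands on the diagonals prescribed by Lemma \ref{lem:homeomorphisms} (equivalently Lemma \ref{lem:ISVD}) together with \eqref{eq:I0range}, and orthogonality reduces to disjointness of fan-beam indices. The divergence --- and the gap --- is in your third paragraph. In the paper, \eqref{eq:I1span} is not proved inside this theorem at all: it follows by taking the $L^2_\gamma$-closure in the range characterization \eqref{eq:rangeIperp}, which is established beforehand (Lemma \ref{lem:rangeoneforms} and its corollary) by an \emph{indirect} argument: one characterizes the full range $I_1 d^\gamma(L^2_\gamma(\Dm;S^1(T^*\Dm)))$ using the decomposition \eqref{eq:oneform2}, proves that the $\star\d$-part and the tt-part of the range are orthogonal (via Santal\'o's formula, fiberwise Fourier analysis, the orthogonality \eqref{eq:orthogonality} of the weighted elliptic decomposition, and the integrability check of Lemma \ref{lem:psinkpreimage}), and then reads off $I_1(\star\d(H_0^{1,-\gamma}(\Dm)))$ as the orthogonal complement of the tt-part inside the full range. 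Your replacement for this chain does not go through as sketched.

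Concretely, three problems. (i) Your proposed test family, the ``weighted disk polynomials underlying \cite{Mishra2022}'', i.e.\ the $\wtZ_{n,k}^\gamma$, is not a family in $H_0^{1,-\gamma}(\Dm)$: these polynomials do not vanish on $\partial\Dm$ (already $\wtZ_{0,0}^\gamma$ is a nonzero constant, which violates the Poincar\'e inequality of Lemma \ref{lem:poincare} satisfied by all elements of $H_0^{1,-\gamma}(\Dm)$), and multiplying by $d^\gamma$ does not repair this, since the gradient term $\gamma d^{\gamma-1}(\nabla d)\,\wtZ_{n,k}^\gamma$ has infinite $L^2_{-\gamma}$-norm (its squared integrand scales like $d^{\gamma-2}$ near $\partial\Dm$, and $\gamma-2<-1$). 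So the completeness hypothesis your computation rests on has no candidate family. (ii) Even granting a legitimate family, the identity $I_1(\star\d h)=-\partial_s I_0 h$ does not hand you single fan-beam polynomials: for $h\in d^\gamma L^2_\gamma(\Dm)$ one has $I_0 h\in\operatorname{span}\{\psi_{n,k}^{\gamma,+}\}$, and $\partial_s$ applied to $(1-s^2)^{\gamma+1/2}\hat L_n^\gamma(s)$ lowers the Gegenbauer parameter, producing functions proportional to $(1-s^2)^{\gamma-1/2}C_{n+1}^{\gamma}(s)$, i.e.\ fan-beam polynomials of parameter $\gamma-1$ rather than $\gamma$; re-expanding these in the $\psi_{n,k}^{\gamma,-}$ basis is precisely the nontrivial computation your sketch omits. (iii) The ``delicate point'' you flag --- exclusion of the columns $k=0$ and $k=n+1$ --- is exactly the content of the orthogonality statement of Lemma \ref{lem:rangeoneforms}, for which you offer no substitute argument. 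Since \eqref{eq:I1span} is part of the statement being proved, the theorem's one substantive new claim is left unestablished; the remainder of your proposal is correct bookkeeping.
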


\newcommand{\gridpoint}[2]{({#2-(#1)/2}, {(#1)*sqrt(3)/2})}
\newcommand{\gridlabel}[2]{({#2-(#1)/2}, {(#1)*sqrt(3)/2-0.3})}
\newcommand\N{3}
\newcommand\boxscale{0.6}
\newcommand\figscale{1.15}
\begin{figure}[h]
\centering
\scalebox{\boxscale}{
\begin{tikzpicture}[x=1cm,y=1cm,scale=\figscale]
\newcommand\arrowlabel{$\xrightarrow{\displaystyle I(d^\gamma \wtZ_{n,k}^\gamma e^{2j i\theta})}$}

\path\gridlabel{\N+1}{-\N+2.5} node(x) {$j>0$};
\path\gridlabel{\N+1}{\N/2+0.5} node(x) {$j=0$};
\path\gridlabel{\N+1}{2*\N-1.5} node(x) {$j<0$};

\foreach \k [parse=true] in {1,...,\N} {
    \fill [cyan,opacity=0.125] \gridpoint{0}{0.5} -- \gridpoint{\N}{\N+0.5}
        -- \gridpoint{\N}{-\k-0.5} -- \gridpoint{0}{-\k-0.5};
}
\foreach \k [parse=true] in {1,...,\N} {
    \fill [magenta,opacity=0.125] \gridpoint{0}{-0.5} -- \gridpoint{\N}{-0.5}
        -- \gridpoint{\N}{\N+\k+0.5} -- \gridpoint{0}{\k+0.5};
}

\draw[->,dashed] \gridpoint{0}{0.5} -- \gridpoint{\N+0.25}{\N+0.75};
\draw[->] \gridpoint{0}{-0.5} -- \gridpoint{\N+0.25}{-0.5};

\draw[->] \gridpoint{0}{-(\N+mod(\N,2))}--\gridpoint{0}{\N+mod(\N,2)};

\foreach \k [parse=true] in {1,...,\N} {
    \pgfmathtruncatemacro\kminus{-\k}
    \foreach \n in {0,...,\N} {
        \draw[fill] \gridpoint{\n}{-\k} + (-1pt,0)
            arc[start angle=180,end angle=360,radius=1pt];
        \draw \gridpoint{\n}{-\k} circle (1pt);
        \path \gridlabel{\n}{-\k} node(x) {$\psi_{\n,\kminus}^{\gamma,+}$};

        \pgfmathtruncatemacro\kplus {\k+\n}
        \draw[fill] \gridpoint{\n}{\kplus} + (1pt,0)
            arc[start angle=0,end angle=180,radius=1pt];
        \draw \gridpoint{\n}{\kplus} circle (1pt);
        \path \gridlabel{\n}{\kplus} node(x) {$\psi_{\n,\kplus}^{\gamma,+}$};
    }
}

\foreach \n in {0,...,\N} {
    \foreach \k in {0,...,\n} {

        \fill \gridpoint{\n}{\k} circle (1pt);
        \path \gridlabel{\n}{\k} node(x) {$\psi_{\n,\k}^{\gamma,+}$};
    }
}
\end{tikzpicture}}

\caption{Decomposition of $L^2_{\gamma,+}(\partial_+ S\Dm)$ induced by Equation \eqref{eq:evendecomposition}: $\operatorname{Ran}(I_0d^\gamma)$ is spanned by the polynomials in-between the solid and dashed lines. For fixed $j\geq 1$, the diagonals $\left\{\psi_{n,-j}^{\gamma,+}\right\}$ and $\left\{\psi_{n,n+j}^{\gamma,+}\right\}$ capture the range of $L^2_\gamma(\Dm;\Stt^{2j}(T^*\Dm))$.}
\label{fig:Ievenrange}
\end{figure}

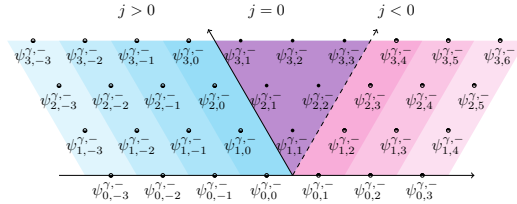
\begin{figure}[h]
\centering
\scalebox{\boxscale}{
\begin{tikzpicture}[x=1cm,y=1cm,scale=\figscale]
\newcommand\arrowlabel{$\xrightarrow{\displaystyle I(d^\gamma \wtZ_{n,k}^\gamma e^{(2j+1)i\theta})}$}

\path\gridlabel{\N+1}{-\N+2.5} node(x) {$j>0$};
\path\gridlabel{\N+1}{\N/2+0.5} node(x) {$j=0$};
\path\gridlabel{\N+1}{2*\N-1.5} node(x) {$j<0$};

\foreach \k [parse=true] in {0,...,\N} {
    \fill [cyan,opacity=0.125] \gridpoint{0}{0.5} -- \gridpoint{\N}{0.5}
        -- \gridpoint{\N}{-\k-0.5} -- \gridpoint{0}{-\k-0.5};
}
\foreach \k [parse=true] in {0,...,\N} {
    \fill [magenta,opacity=0.125] \gridpoint{0}{0.5} -- \gridpoint{\N}{\N+0.5}
        -- \gridpoint{\N}{\N+\k+0.5} -- \gridpoint{0}{\k+0.5};
}

\fill [cyan,opacity=0.375]
    \gridpoint{0}{0.5} -- \gridpoint{3}{3.5} -- \gridpoint{3}{0.5} -- \gridpoint{0}{0.5};
\fill [magenta,opacity=0.375]
    \gridpoint{0}{0.5} -- \gridpoint{3}{3.5} -- \gridpoint{3}{0.5} -- \gridpoint{0}{0.5};

\draw[->,dashed] \gridpoint{0}{0.5} -- \gridpoint{\N+0.25}{\N+0.75};
\draw[->] \gridpoint{0}{0.5} -- \gridpoint{\N+0.25}{0.5};

\draw[->] \gridpoint{0}{-(\N+mod(\N,2))}--\gridpoint{0}{\N+mod(\N,2)};

\foreach \k in {0,...,\N} {
    \pgfmathtruncatemacro\kminus{-\k}
    \foreach \n in {0,...,\N} {
        \draw[fill] \gridpoint{\n}{-\k} + (-1pt,0)
            arc[start angle=180,end angle=360,radius=1pt];
        \draw \gridpoint{\n}{-\k} circle (1pt);
        \path \gridlabel{\n}{-\k} node(x) {$\psi_{\n,\kminus}^{\gamma,-}$};
    }
}

\foreach \k in {1,...,\N} {
    \pgfmathtruncatemacro\kminus{-\k}
    \foreach \n in {0,...,\N} {
        \pgfmathtruncatemacro\kplus {\k+\n}
        \draw[fill] \gridpoint{\n}{\kplus} + (1pt,0)
            arc[start angle=0,end angle=180,radius=1pt];
        \draw \gridpoint{\n}{\kplus} circle (1pt);
        \path \gridlabel{\n}{\kplus} node(x) {$\psi_{\n,\kplus}^{\gamma,-}$};
    }
}

\foreach \n in {1,...,\N} {
    \foreach \k in {1,...,\n} {
        \fill \gridpoint{\n}{\k} circle (1pt);
        \path \gridlabel{\n}{\k} node(x) {$\psi_{\n,\k}^{\gamma,-}$};
    }
}
\end{tikzpicture}}

\caption{Decomposition of $L^2_{\gamma,-}(\partial_+S\Dm)$ induced by Equation \eqref{eq:odddecomposition}: $\operatorname{Ran}(I_1(\star \d))$ is spanned by the polynomials in between the solid and and dashed lines. For fixed $j\geq 0$, the diagonals $\left\{\psi_{n,-j}^{\gamma,-}\right\}$ and $\left\{\psi_{n,n+j+1}^{\gamma,-}\right\}$ capture the range of $L^2_\gamma(\Dm;\Stt^{2j+1}(T^*\Dm))$.}
\label{fig:Ioddrange}
\end{figure}

\subsection{Range characterization and reconstruction}
Decompose the identity map $\Id|_{L^2_{\gamma,+}(\inward)}$ so that $\Id = \Pi_0 + \Pi_2 + \cdots + \Pi_{2j} + \cdots$, where
\begin{align}
\Pi_0 &\colon L^2_{\gamma,+}(\inward) \to
    \text{span}_{L^2_\gamma}(\psi_{n,k}^{\gamma,+}, n\ge 0, 0 \leq k\leq n) \quad\text{and}\\
\Pi_{2j} &\colon L^2_{\gamma,+}(\inward) \to \ell^2(\psi_{n,-j}^{\gamma,+}, n\ge 0)
        \oplus \ell^2(\psi_{n,n+j}^{\gamma,+}, n\ge 0).
\end{align}
for $j\geq 1$. Likewise, decompose the identity map
$\Id_{L^2_{\gamma,-}(\inward)}$
into $\Id = \Pi_\perp + \Pi_1 + \Pi_3 + \cdots + \Pi_{2j+1} + \cdots$, where 
\begin{align}
\begin{aligned}
\Pi_\perp &\colon L^2_{\gamma,-}(\inward) \to \text{span}_{L^2_\gamma}(\psi_{n,k}^{\gamma,-}, n\geq 1, 1\leq k\leq n) \quad\text{and}\\
\Pi_{2j+1} &\colon L^2_{\gamma,-}(\inward) \to  \ell^2(\psi_{n,-j}^{\gamma,-}, n\ge 0) \oplus \ell^2 (\psi_{n,n+j+1}^{\gamma,-}, n\ge 0), \quad j\ge 0.
\end{aligned}
\end{align}

The next theorem provides a characterization for whether an element of $L^2_{\gamma,\pm}(\inward)$ equals $I_md^\gamma g$ for some integer $m$ and $m$-tensor $g$.

\begin{theorem}[Range characterization]\label{thm:rangechar}
Let $p\geq 0$ be an integer. A function $u\in L^2_{\gamma,+}(\inward)$ takes the form $u = I_{2p} d^\gamma f$ for some $f\in L^2_\gamma(\Dm, S^{2p}(T^*\Dm))$ if and only if

(a) For all $j>p$, $\Pi_{2j} u =0$,

(b) For all $1\le j\le p$, $\Pi_{2j}u\in h^{(1+\gamma)/2}(\psi_{n,-j}^{\gamma,+}, n\ge 0) \oplus h^{(1+\gamma)/2}(\psi_{n,n+j}^{\gamma,+}, n\ge 0)$, and

(c) Writing $\Pi_0 u = \sum_{n=0}^\infty\sum_{k=0}^n u_{n,k}\psi_{n,k}^{\gamma,+}$, we have
\begin{align}
    \sum_{n=0}^\infty\sum_{k=0}^n \frac{|u_{n,k}|^2}{(\sigma_{n,k}^\gamma)^2} < \infty.
    \label{eq:spectraldecay_even}
\end{align}

A function $u\in L^2_{\gamma,-}(\inward)$ takes the form $u = I_{2p+1} d^\gamma f$ for some $f\in L^2_\gamma(\Dm, S^{2p+1}(T^*\Dm))$ if and only if

(a) For all $j>p$, $\Pi_{2j+1} u =0$,

(b) For all $0\le j\le p$, $\Pi_{2j+1}u\in h^{(1+\gamma)/2} (\psi_{n,-j}^{\gamma,-}, n\ge 0) \oplus h^{(1+\gamma)/2} (\psi_{n,n+j+1}^{\gamma,-}, n\ge 0)$, and

(c) Writing $\Pi_\perp u = \sum_{n=1}^\infty\sum_{k=1}^n u_{n,k}\psi_{n,k}^{\gamma,-}$, we have
\begin{align}
    \sum_{n=1}^\infty\sum_{k=1}^n \frac{|u_{n,k}|^2}{(\sigma_{n,k}^\gamma)^2} < \infty.
    \label{eq:spectraldecay_odd}
\end{align}
\end{theorem}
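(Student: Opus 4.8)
The plan is to deduce the characterization directly from the structural results already established, treating the theorem as careful bookkeeping on top of the orthogonal range decomposition. I will argue the even case $m=2p$ in detail; the odd case $m=2p+1$ is identical after replacing $\Pi_0$ by $\Pi_\perp$, the basis $\psi^{\gamma,+}$ by $\psi^{\gamma,-}$, and invoking \eqref{eq:I1span} in place of \eqref{eq:I0range}.

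For the forward implication I would start from $u = I_{2p} d^\gamma f$ and apply Corollary \ref{cor:decomposition} to write $f = d^{-\gamma}\d^s q + \fitt$ with $\fitt = \sum_{j=0}^p L^{p-j}\tilde f_{2j}$. Lemma \ref{lem:kernel} annihilates the potential part, so $u = I_{2p}d^\gamma \fitt = \sum_{j=0}^p I_{2j} d^\gamma \tilde f_{2j}$ by the last identity of that corollary. The crux is then Theorem \ref{thm:rangedecomp}: since the summands occupy pairwise disjoint, hence orthogonal, index sets (the triangular block for $j=0$ and the diagonal pairs $\{\psi_{n,-j}^{\gamma,+}\}\cup\{\psi_{n,n+j}^{\gamma,+}\}$ for $1\le j\le p$), the projections are read off termwise: $\Pi_0 u = I_0 d^\gamma \tilde f_0$, $\Pi_{2j}u = I_{2j}d^\gamma \tilde f_{2j}$ for $1\le j\le p$, and $\Pi_{2j}u = 0$ for $j>p$. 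Claim (a) is then immediate; (b) follows because $\Pi_{2j}u$ lies in the range of the homeomorphism $I_{2j}d^\gamma$, which by Lemma \ref{lem:homeomorphisms} is exactly $h^{(1+\gamma)/2}(\psi_{n,-j}^{\gamma,+}, n\ge 0)\oplus h^{(1+\gamma)/2}(\psi_{n,n+j}^{\gamma,+}, n\ge 0)$; and (c) follows from \eqref{eq:I0range}, since $\Pi_0 u = I_0 d^\gamma \tilde f_0$ has coefficients $u_{n,k} = \sigma_{n,k}^\gamma a_{n,k}$ with $\sum|a_{n,k}|^2<\infty$, which is precisely \eqref{eq:spectraldecay_even}.

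For the converse I would reconstruct $f$ piece by piece. Conditions (a)--(b) yield $u = \Pi_0 u + \sum_{j=1}^p \Pi_{2j}u$ with each $\Pi_{2j}u$ in the target space of the homeomorphism $I_{2j}d^\gamma$; inverting it produces $\tilde f_{2j}\in L^2_\gamma(\Dm;\Stt^{2j}(T^*\Dm))$ with $I_{2j}d^\gamma \tilde f_{2j}=\Pi_{2j}u$. Condition (c) lets me define $\tilde f_0\in L^2_\gamma(\Dm)$ through the SVD of $I_0 d^\gamma$ recorded in \eqref{eq:I0range}: setting the domain coefficients to $a_{n,k}=u_{n,k}/\sigma_{n,k}^\gamma$, finiteness of $\sum|a_{n,k}|^2$ is exactly \eqref{eq:spectraldecay_even}, and then $I_0 d^\gamma \tilde f_0 = \Pi_0 u$. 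Assembling $f := \sum_{j=0}^p L^{p-j}\tilde f_{2j}\in L^2_\gamma(\Dm;S^{2p}(T^*\Dm))$ and applying Corollary \ref{cor:decomposition} once more gives $I_{2p}d^\gamma f = \Pi_0 u + \sum_{j=1}^p \Pi_{2j}u = u$.

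The step I expect to require the most care is the identification of the projections with the individual transform components, namely justifying that the orthogonality of Theorem \ref{thm:rangedecomp} genuinely diagonalizes the decomposition so that $\Pi_{2j}$ reads off exactly the $j$-th tt-piece and nothing else; this rests on the disjointness of the index sets (Figure \ref{fig:Ievenrange}). A related subtlety is that the $j=0$ block is governed by the genuine SVD summability (c) rather than a Hilbert-scale condition, because $I_0 d^\gamma$ is \emph{not} a homeomorphism onto a Hilbert scale but has super-polynomially decaying singular values $\sigma_{n,k}^\gamma$, so its range is dense but not closed in its triangular span and (c) is precisely the condition placing $\Pi_0 u$ in the actual range. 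In the odd case the only extra input is that $I_1(\star\d\,\cdot\,)$ carries the same singular-value sequence $\sigma_{n,k}^\gamma$ underlying \eqref{eq:I1span}, so that condition (c) there is checked verbatim.
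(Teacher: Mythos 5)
Your proof is correct and follows essentially the same route as the paper's: reduction to iterated-tt form via Corollary \ref{cor:decomposition} and Lemma \ref{lem:kernel}, identification of each projection $\Pi_{2j}u$ with the corresponding summand $I_{2j}d^\gamma \tilde f_{2j}$ through the disjoint index sets underlying Theorem \ref{thm:rangedecomp}, and construction of a preimage by inverting the homeomorphisms of Lemma \ref{lem:homeomorphisms} together with the SVD characterization \eqref{eq:I0range} (resp. \eqref{eq:rangeIperp} in the odd case). One peripheral correction: the singular values decay polynomially, not super-polynomially (e.g. $(\sigma_{n,0}^\gamma)^2 \sim n^{-\gamma-1}$), though your essential point---that they tend to zero, so the range of $I_0 d^\gamma$ is dense but not closed in its triangular span and (c) is exactly range membership---is unaffected.
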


In the proof of the theorem above, when a data function $u$ satisfies the conditions to be in the range, the construction of a preimage is based on the range characterizations \eqref{eq:I0range} for $I_0 d^\gamma(L^2_\gamma(\Dm))$, \eqref{eq:rangeIperp} for $I_1 (\star (\d  H_0^{1,-\gamma}(\Dm)))$, as well as the homeomorphisms of Lemma \ref{lem:homeomorphisms} for the tt components. 

In the result below, we provide an alternate inversion formula for the tt components based on an integral kernel. 

\begin{theorem}[Reconstruction of the tt modes]\label{thm:reconstruction}
For each integer $j\geq 0$ and $\gamma\in(-1,1)$, define
\begin{align}
G_{2j}^\gamma(\beta,\alpha;z) &\coloneq \frac{\mu^{2\gamma+1}e^{2ji(\beta+\alpha+\pi)}}{2^{4\gamma+2}\gamma!^2} \frac{(\gamma+1)(1+e^{i(2\beta+2\alpha)}z^2)}{((e^{i\beta}z+1)(e^{i(\beta+2\alpha+\pi)}z+1))^{\gamma+2}}.
\label{eq:kernel}
\end{align}
The modes of a symmetric $m$-tensor in \emph{iterated tt-form} are reconstructed by setting, if $m=2p$:
\begin{align}
f_{2j} = (\mathcal{D}, G_{2j}^\gamma(\cdot,\cdot; z))_{L^2_\gamma}\ \d z^{2j}
+ (\mathcal{D}, \overline{G_{2j}^\gamma(\cdot,\cdot; z)})_{L^2_\gamma}\ \d \zbar^{2j},
\label{eq:reconeven}
\end{align}
for $1\leq j\leq p$ in any order. If $m=2p{+}1$, then set:
\begin{align}
f_{2j+1} = (\mathcal{D}, e^{i(\beta+\alpha+\pi)}G_{2j}^\gamma(\cdot,\cdot; z))_{L^2_\gamma}\ \d z^{2j+1}
+ (\mathcal{D}, \overline{e^{i(\beta+\alpha+\pi)}G_{2j}^\gamma(\cdot,\cdot;z)})_{L^2_\gamma}\ \d \zbar^{2j+1},
\label{eq:reconodd}
\end{align}
for $0\leq j\leq p$ in any order.
\end{theorem}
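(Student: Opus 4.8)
The plan is to reduce the claim to a single reproducing identity on one tt-diagonal and then to recognize the explicit rational kernel \eqref{eq:kernel} as the generating function realizing it. Two structural facts make the reduction possible. First, by Corollary \ref{cor:decomposition} and the identity $\ell_m(Lw)=\ell_{m-2}(w)$ on $\SD$ (valid since $g(v,v)=1$ on the unit bundle), the data of an iterated-tt tensor splits as $\mathcal{D}=\sum_{j}I_{2j}d^\gamma\tilde f_{2j}$ with $\tilde f_{2j}\in L^2_\gamma(\Dm;\Stt^{2j}(T^*\Dm))$; second, by Theorem \ref{thm:rangedecomp} the summands occupy pairwise disjoint, hence orthogonal, families of fan-beam polynomials. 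Writing $\tilde f_{2j}=f_{2j}\,\d z^{2j}+f_{-2j}\,\d\zbar^{2j}$ as in \eqref{eq:Stt}, the analytic piece $f_{2j}\,\d z^{2j}$ is carried to the diagonal $\{\psi^{\gamma,+}_{n,-j}\}_n$ and the antianalytic piece to $\{\psi^{\gamma,+}_{n,n+j}\}_n$. I would first check, by expanding the denominator of \eqref{eq:kernel} as a power series in $z$, that $G_{2j}^\gamma(\cdot,\cdot;z)$ lies (for each fixed $z$) in the closed span of $\{\psi^{\gamma,+}_{n,-j}\}_n$; then the pairing $(\mathcal{D},G_{2j}^\gamma(\cdot,\cdot;z))_{L^2_\gamma}$ annihilates every summand except $I_{2j}d^\gamma(f_{2j}\,\d z^{2j})$, while $\overline{G_{2j}^\gamma}$ sits on the complementary diagonal and extracts $f_{-2j}$ symmetrically. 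It therefore suffices to prove $(I_{2j}d^\gamma(f_{2j}\,\d z^{2j}),G_{2j}^\gamma(\cdot,\cdot;z))_{L^2_\gamma}=f_{2j}(z)$ for analytic $f_{2j}$.

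Next I would diagonalize the forward map on the monomial basis of the weighted Bergman space $L^2_\gamma(\Dm)\cap\ker\dbar$. For $f_{2j}(z)=z^N$, definition \eqref{eq:ellm} gives $\ell_{2j}(z^N\,\d z^{2j})=z^N e^{2ji\theta}$ with $\theta=\beta+\pi+\alpha$ constant along each Euclidean geodesic, and $z_{\beta,\alpha}(t)=e^{i\beta}(1-te^{i\alpha})$. Using \eqref{eq:d_along_flow} and the substitution $t=2\cos\alpha\,s$, the inner integral collapses to a Beta-type integral $\int_0^1(1-(1+e^{2i\alpha})s)^N s^\gamma(1-s)^\gamma\,\d s=B(\gamma+1,\gamma+1)\,{}_2F_1(-N,\gamma+1;2\gamma+2;1+e^{2i\alpha})$, via \eqref{eq:beta}. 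A $\beta$-frequency count shows the result has $\beta$-frequency $N+2j$, which within the $\{\psi^{\gamma,+}_{n,-j}\}_n$ diagonal forces $n=N$; recognizing the hypergeometric polynomial above as a multiple of the orthogonal polynomial $\hat L_N^\gamma(\sin\alpha)$ then yields $I_{2j}d^\gamma(z^N\,\d z^{2j})=c_N\,\psi^{\gamma,+}_{N,-j}$ with an explicit nonzero constant $c_N$ expressible through Gamma functions, consistent with the singular-value bookkeeping behind Lemma \ref{lem:homeomorphisms}.

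The reproducing step is then to verify $(c_N\psi^{\gamma,+}_{N,-j},G_{2j}^\gamma(\cdot,\cdot;z))_{L^2_\gamma}=z^N$ for every $N$, whence the sought identity follows for all $f_{2j}$ by $\Cm$-linearity and continuity, the monomials being dense in the Bergman space and $I_{2j}d^\gamma$ a homeomorphism onto its range by Lemma \ref{lem:homeomorphisms}. Concretely this means showing that \eqref{eq:kernel} is exactly the closed form of the generating series $\sum_N c_N^{-1}z^N\,\psi^{\gamma,+}_{N,-j}$ (after tracking the conjugation in the $L^2_\gamma(\inward)$ inner product, which is what makes the holomorphic-in-$z$ kernel reproduce the holomorphic coefficient $f_{2j}$ rather than its conjugate). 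The odd case $m=2p+1$ reduces verbatim to the even case through the isometry $u\mapsto e^{i(\beta+\alpha+\pi)}u$ of \eqref{eq:psinkpm} between $L^2_{\gamma,+}(\inward)$ and $L^2_{\gamma,-}(\inward)$: since $\ell_{2j+1}(f\,\d z^{2j+1})=e^{i\theta}\ell_{2j}(f\,\d z^{2j})$, the odd data is $e^{i(\beta+\alpha+\pi)}$ times even-type data, which is precisely the extra factor appearing in the kernel of \eqref{eq:reconodd}.

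The main obstacle is this last generating-function identity, including the precise constant $\tfrac{\gamma+1}{2^{4\gamma+2}\gamma!^2}$ and, crucially, the numerator factor $1+e^{i(2\beta+2\alpha)}z^2$. Summing $\sum_N c_N^{-1}z^N\,\psi^{\gamma,+}_{N,-j}$ in closed form requires a Gegenbauer/Jacobi generating-function evaluation: the two denominator factors are the weighted Bergman kernels $(1+e^{i\beta}z)^{-(\gamma+2)}$ and $(1+e^{i(\beta+2\alpha+\pi)}z)^{-(\gamma+2)}$ at the geodesic endpoints $z_{\mathrm{in}}=e^{i\beta}$, $z_{\mathrm{out}}=e^{i(\beta+2\alpha+\pi)}$, while the numerator is exactly the combination $1-z_{\mathrm{in}}z_{\mathrm{out}}z^2$ needed to kill the overlap with the complementary diagonal and restore the reproduction. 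Assembling these pieces and matching the normalizations of $\hat L_N^\gamma$ against $B(\gamma+1,\gamma+1)=\gamma!^2/(2\gamma+1)!$ is where the genuine computation lies.
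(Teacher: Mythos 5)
Your proposal is correct and follows essentially the same route as the paper: reduce, via the orthogonality of the fan-beam diagonals (Theorem \ref{thm:rangedecomp}, Lemma \ref{lem:homeomorphisms}), to a reproducing identity on each diagonal, then identify $G_{2j}^\gamma$ as the closed form of the series $\sum_{n\ge 0}\psi_{n,-j}^{\gamma,+}\,\hat g_{n,0}^\gamma z^n/(\sigma_{n,0}^\gamma)^2$, which the paper sums using the differentiated Gegenbauer generating function \eqref{eq:gegenbauerseries} with $w=ze^{i(\beta+\alpha+\pi)}/i$ (whose numerator $1-w^2$ is exactly the factor $1+e^{i(2\beta+2\alpha)}z^2$ you predicted), the odd case following by the extra phase $e^{i(\beta+\alpha+\pi)}$ just as you describe. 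The only deviations are cosmetic: you would re-derive the action on monomials by a direct Beta/hypergeometric computation where the paper simply invokes Lemma \ref{lem:ISVD} (the SVD of $I_0d^\gamma$ combined with the shift Lemma \ref{lem:xrayshift}), and the normalization bookkeeping you defer as ``the genuine computation'' is precisely what is carried out via Appendix \ref{sec:normalization}.
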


From the reconstruction Theorem \ref{thm:reconstruction} and further discussions in Section \ref{sec:reconstruction} on the unique reconstruction of $f\in L^2_\gamma(\Dm)$ from $I_0 d^\gamma f$ and of $h\in H^{1,-\gamma}_0(\Dm)$ from $I_1 (\star\d h)$, if $I_m d^\gamma f= 0$ for some $f\in L^2_\gamma(\Dm; S^m(T^*\Dm))$ in iterated-tt form, then $f=0$. Combining this with Corollary \ref{cor:decomposition}, we state without proof the following solenoidal injectivity result in a weighted context. 

\begin{corollary}[Solenoidal injectivity]\label{cor:sinjectivity} 
    If $I_m d^\gamma f = 0$ for some $f\in L^2_\gamma(\Dm, S^m(T^*\Dm))$, then f = $d^{-\gamma} \d^s q$ for some $q\in H^{1,\gamma}_0(\Dm; S^{m-1}(T^* \Dm))$.
\end{corollary}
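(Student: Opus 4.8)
The plan is to reduce the statement to the \emph{injectivity of $I_m d^\gamma$ on the iterated-tt gauge}, which is exactly the fact advertised in the paragraph preceding the corollary. First I would invoke Corollary \ref{cor:decomposition} to write the given $f\in L^2_\gamma(\Dm; S^m(T^*\Dm))$ uniquely as
\[
f = d^{-\gamma}\d^s q + \fitt, \qquad q\in H^{1,-\gamma}_0(\Dm; S^{m-1}(T^*\Dm)),
\]
with $\fitt$ in iterated-tt form. By Lemma \ref{lem:kernel}, the potential part $d^{-\gamma}\d^s q$ lies in the kernel of $I_m d^\gamma$, so linearity together with the hypothesis $I_m d^\gamma f = 0$ gives $I_m d^\gamma \fitt = 0$. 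It then remains to show that the vanishing of $I_m d^\gamma \fitt$ forces $\fitt = 0$; the conclusion $f = d^{-\gamma}\d^s q$ follows at once (the target space for $q$ being $H^{1,-\gamma}_0$, matching the decomposition).

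The second step is to decouple the modes of $\fitt$. Writing $\fitt$ in the form \eqref{eq:fitt_even} or \eqref{eq:fitt_odd}, Corollary \ref{cor:decomposition} also records that
\[
I_m d^\gamma \fitt = \sum_j I_{2j} d^\gamma \tilde{f}_{2j}
\quad\text{or}\quad
I_m d^\gamma \fitt = I_1(\star\d h) + \sum_j I_{2j+1} d^\gamma \tilde{f}_{2j+1}.
\]
By Theorem \ref{thm:rangedecomp}, these summands land in mutually \emph{orthogonal} subspaces of $L^2_{\gamma,\pm}(\inward)$, indexed by disjoint sets of fan-beam polynomials as depicted in Figures \ref{fig:Ievenrange} and \ref{fig:Ioddrange}. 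Hence $I_m d^\gamma \fitt = 0$ forces each summand to vanish individually: $I_{2j} d^\gamma \tilde{f}_{2j} = 0$ for every $j$ in the even case, and $I_1(\star\d h) = 0$ together with $I_{2j+1} d^\gamma \tilde{f}_{2j+1} = 0$ for every $j$ in the odd case.

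Finally I would apply injectivity piece by piece. For the tt modes ($\tilde{f}_{2j}$ with $j\ge 1$ in the even case and $\tilde{f}_{2j+1}$ with $j\ge 0$ in the odd case), Lemma \ref{lem:homeomorphisms} asserts that $I_{2j} d^\gamma$ and $I_{2j+1} d^\gamma$ are homeomorphisms onto their Hilbert-scale images, hence in particular injective, so the corresponding mode vanishes; Theorem \ref{thm:reconstruction} even furnishes explicit recovery. For the base modes, injectivity of $I_0 d^\gamma$ on $L^2_\gamma(\Dm)$ follows from the strict positivity of the singular values $\sigma_{n,k}^\gamma$ in \eqref{eq:I0range}, while injectivity of $h\mapsto I_1(\star\d h)$ on $H^{1,-\gamma}_0(\Dm)$ is the uniqueness statement recorded in Section \ref{sec:reconstruction}. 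Thus every component of $\fitt$ is zero, so $\fitt = 0$ and $f = d^{-\gamma}\d^s q$. The point demanding the most care — and the main obstacle — is the decoupling in the second step: everything hinges on the \emph{orthogonality} of the range pieces from Theorem \ref{thm:rangedecomp}, since it is this orthogonality (rather than mere directness of the sum) that lets one pass from the single identity $I_m d^\gamma \fitt = 0$ to the vanishing of each mode's transform, before injectivity on each individual piece can be invoked.
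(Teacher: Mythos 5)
Your proposal is correct and takes essentially the same approach as the paper: the corollary is stated there without proof, but the preceding paragraph sketches exactly your route — Corollary \ref{cor:decomposition} together with Lemma \ref{lem:kernel} reduces the claim to injectivity of $I_m d^\gamma$ on the iterated-tt gauge, which is obtained by decoupling the modes via the orthogonal range decomposition (Theorem \ref{thm:rangedecomp}) and then invoking mode-by-mode injectivity (Lemma \ref{lem:homeomorphisms} for the tt pieces, positivity of the $\sigma_{n,k}^\gamma$ for $I_0 d^\gamma$, and the unique recovery of $h$ from $I_1(\star\d h)$ in Section \ref{sec:reconstruction}), precisely as in Theorem \ref{thm:reconstruction}. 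Your parenthetical observation that the natural space for $q$ is $H^{1,-\gamma}_0(\Dm; S^{m-1}(T^*\Dm))$, matching Corollary \ref{cor:decomposition} and Lemma \ref{lem:kernel}, is also correct; the $H^{1,\gamma}_0$ appearing in the corollary's statement is evidently a sign typo.
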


\noindent{\bf Outline.} The remainder of the article is organized as follows. We address forward mapping properties in Section \ref{sec:forward}, including the proof of Proposition \ref{prop:Ibounded}. 
In Section \ref{sec:gauge}, we prove the results on tensor field decompositions. This first requires a preliminary elliptic decomposition result for a weighted $\dbar$ operator in Section \ref{sec:elldecunitdisk}, and some preliminaries on fiberwise Fourier analysis in Section \ref{sec:fibFourier}. We then prove Theorem \ref{thm:Yorkdecomp} and Corollary \ref{cor:decomposition} in Section \ref{sec:proofdecomp}.
In Section \ref{sec:rangeDecomp}, we address the data space decomposition in terms of the range of the X-ray transform over different tensor field components, covering notably the proofs of Lemma \ref{lem:homeomorphisms} and Theorem \ref{thm:rangedecomp}. 
In Section \ref{sec:rangeCharac}, we prove Theorem \ref{thm:rangechar} on the range characterization of $I_m d^\gamma$. 
In Section \ref{sec:reconstruction}, we address reconstruction aspects, including the proof of Theorem \ref{thm:reconstruction} and a discussion on the inversion of $I_0 d^\gamma$ and $I_1 {\star}\d$.
In Appendix \ref{sec:normalization}, we discuss our choice of eigenfunction normalizations, and in Appendix \ref{sec:poincare}, we prove a Poincar\'e type inequality that is used in Section \ref{sec:elldecunitdisk}.


\section{Forward mapping properties - Proof of Proposition \ref{prop:Ibounded}}\label{sec:forward}
The fundamental change-of-variables formula on $S\Dm$ is given by Santal\'o's formula, as follows.
\begin{align}
\int_\inward \int_0^{\tau(\beta,\alpha)} f(\varphi_t(\beta,\alpha)) \d t\ \cos\alpha\d\beta\d\alpha = \int_\SD f\ \d\Sigma^3, \qquad f\in C^\infty(S\Dm),
\label{eq:Santalo}
\end{align}
see, e.g. \cite[Proposition 3.5.14]{paternain_salo_uhlmann_2023}. Recall the ``fan-beam projection'' map 
\begin{align}   
    \pif\colon S\Dm\to \inward,    \label{eq:piF}
\end{align}
which maps a point $(z,\theta)\in S\Dm$ to the fan-beam coordinates of the unique oriented geodesic through $\Dm$ passing through $(z,\theta)$. 

We now prove Proposition \ref{prop:Ibounded}.
\begin{proof}[Proof of Proposition \ref{prop:Ibounded}] As a preliminary computation, observe that
\begin{align}
(Id^\gamma)(\beta,\alpha) = \int_0^{2\cos\alpha}\!\!\!\!d(\gamma_{\beta,\alpha}(t))^\gamma \d t \stackrel{\eqref{eq:d_along_flow}}{=}
	\int_0^{2\cos\alpha}\!\!\!\! (2\mu t-t^2)^\gamma \d t = (2\mu)^{2\gamma+1} \!\overbrace{\int_0^1 s^\gamma(1-s)^\gamma \d u}^{B(\gamma+1,\gamma+1)},
    \label{eq:Iofdgamma}
\end{align}
where in the last equality, we have changed variable $t = 2s\cos\alpha$ and we write $\mu = \cos \alpha$ for short. 

On to the continuity estimate, for $u\in C^\infty(\SD)$, we compute
\begin{align}
    \|Id^\gamma u\|^2_{L^2_\gamma(\inward)}
    &= \int_\inward |Id^\gamma u|^2 \mu^{-2\gamma}\d\beta \d\alpha\\
    &= \int_\inward \left|\int_0^{2\cos\alpha} \!\!u(\gamma_{\beta,\alpha}(t),\dot\gamma_{\beta,\alpha}(t)) d^\gamma(\gamma_{\beta,\alpha}(t))\d t \right|^2
	\!\!\! \mu^{-2\gamma}\d\beta \d\alpha\\
    &\leq \int_\inward \int_0^{2\cos\alpha}	|u|^2 d^\gamma \d t\ (Id^\gamma(\beta,\alpha))\ \mu^{-2\gamma}\d\beta \d\alpha\\
    &\stackrel{\eqref{eq:Iofdgamma}}{\le} 2^{2\gamma+1} B(\gamma{+}1,\gamma{+}1)\int_\inward \int_0^{2\cos\alpha} |u|^2 d^\gamma \d t\ 
\mu \d\beta \d\alpha\\
&\stackrel{\eqref{eq:Santalo}}{\le} 2^{2\gamma+1} B(\gamma{+}1,\gamma{+}1)
	\int_\SD \!\!|u|^2 d^\gamma \ \d\Sigma^3,
\end{align}
hence the continuity estimate, which by density extends to $L^2_\gamma(S\Dm)$. 

To prove surjectivity, we construct a bounded right-inverse for $Id^\gamma$. Let $u\in C^\infty(\inward)$ be arbitrary. With $\pif$ defined in \eqref{eq:piF}, evaluate the action of $Id^\gamma$ on the first integral
\begin{align}
I d^\gamma \left( \frac{u}{\mu^{2\gamma+1}}\circ \pif \right) =
\frac{u}{\mu^{2\gamma+1}} I d^\gamma &\stackrel{\eqref{eq:Iofdgamma}}{=} \frac{u}{\mu^{2\gamma+1}}(2\mu)^{2\gamma+1}B({\gamma+1},{\gamma+1})\\
&= 2^{2\gamma+1}B(\gamma+1,\gamma+1)u.
\end{align}
Thus, the $d^\gamma$-weighted X-ray transform sends $((u/\mu^{2\gamma+1})\circ\pif)/(2^{2\gamma{+}1}B(\gamma{+}1,\gamma{+}1))$ to $u$. That the map $u\mapsto ((u/\mu^{2\gamma+1})\circ\pif)/(2^{2\gamma{+}1}B(\gamma{+}1,\gamma{+}1))$ extend boundedly to a map $L^2_\gamma(\inward)\to L^2_\gamma(S\Dm)$ follows from the estimate
\begin{align}
\left\|\frac{u}{\mu^{2\gamma+1}}\circ\pif\right\|^2
	_{L^2_\gamma(S\Dm)}&=
\int_{S\Dm} \left( \frac{u}{\mu^{2\gamma+1}}\circ \pif \right)^2 d^\gamma
	\d\Sigma^3 \\
&\! \stackrel{\eqref{eq:Santalo}}{=} \int_\inward \frac{u^2}{\mu^{4\gamma+2}} I(d^\gamma) \mu\d\beta\d\alpha\\
&\! \stackrel{\eqref{eq:Iofdgamma}}{=} \int_\inward \frac{u^2}{\mu^{4\gamma+2}} (2\mu)^{2\gamma+1} B(\gamma{+}1,\gamma{+}1) \mu\d\beta\d\alpha \\
&= 2^{2\gamma+1}B(\gamma{+}1,\gamma{+}1)\int_\inward
	u^2\mu^{-2\gamma}\d\beta\d\alpha.
\end{align}
The proof of Proposition \ref{prop:Ibounded} is complete.
\end{proof}

\section{Tensor decompositions - Proof of Theorems \ref{thm:Yorkdecomp}, \ref{thm:rangedecomp} and Corollary \ref{cor:decomposition}} \label{sec:gauge}

\subsection{A singularly-weighted elliptic decomposition} \label{sec:elldecunitdisk}

The main result of this section is an elliptic decomposition of functions in $L^2(\Dm,d^\gamma |\d z|^2)$ for a weighted $\partial$ operator. 

\begin{proposition}\label{prop:decomp}
Fix $\gamma>-1$. Any $f\in L_\gamma^2(\mathbb{D})$ admits a unique $L^2_\gamma(\Dm)$-orthogonal decomposition
\begin{align*}
f = d^{-\gamma} \partial v + g,
\end{align*}
for some $v\in H^{1,-\gamma}_0(\mathbb{D})$ and $g\in L_\gamma^2(\mathbb{D})\cap \ker\dbar$. Moreover, one has the stability estimate
\begin{align}
\|v\|_{H_0^{1,-\gamma}(\mathbb{D})} + \|g\|_{L^2_\gamma(\mathbb{D})}
	\leq C\|f\|_{L^2_\gamma(\mathbb{D})}.
    \label{eq:stability}
\end{align}
By complex conjugation, there exists a unique decomposition $f=d^{-\gamma}\dbar v' + g$ where $\partial g'=0$ satisfying a similar stability estimate.
\end{proposition}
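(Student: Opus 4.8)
The plan is to realize the decomposition as the orthogonal splitting of $L^2_\gamma(\Dm)$ relative to the closed subspace $\mathcal H := L^2_\gamma(\Dm)\cap\ker\dbar$, together with an identification of $\mathcal H^\perp$ as the range of the map $v\mapsto d^{-\gamma}\partial v$. First I would check that $\mathcal H$ is closed: since $d^\gamma$ is bounded above and below on compact subsets of $\Dm^\circ$, $L^2_\gamma$-convergence implies $L^2_{\mathrm{loc}}$-convergence on $\Dm^\circ$, under which holomorphy persists (this is also recorded in Lemma \ref{lem:charholomorphic}). Hence $L^2_\gamma(\Dm)=\mathcal H\stackrel{\perp}{\oplus}\mathcal H^\perp$, and every $f$ splits as $f=P_{\mathcal H^\perp}f+P_{\mathcal H}f$. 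That $d^{-\gamma}\partial v\in\mathcal H^\perp$ for $v\in H^{1,-\gamma}_0(\Dm)$ is a one-line integration by parts: for $v\in C_c^\infty(\Dm^\circ)$ and $g\in\mathcal H$,
\[
\langle d^{-\gamma}\partial v,g\rangle_{L^2_\gamma}=\int_\Dm \partial v\,\bar g\,|\d z|^2=-\int_\Dm v\,\overline{\dbar g}\,|\d z|^2=0,
\]
and the inclusion $d^{-\gamma}\partial(H^{1,-\gamma}_0(\Dm))\subseteq\mathcal H^\perp$ follows by density. The substance of the proposition is the reverse inclusion $\mathcal H^\perp\subseteq d^{-\gamma}\partial\big(H^{1,-\gamma}_0(\Dm)\big)$, i.e. the construction of the potential $v$ realizing $P_{\mathcal H^\perp}f$.

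I would produce $v$ variationally. Consider the bounded Hermitian form and the bounded functional on $H^{1,-\gamma}_0(\Dm)$,
\[
a(v,w):=\langle\partial v,\partial w\rangle_{L^2_{-\gamma}(\Dm)}=\langle d^{-\gamma}\partial v,d^{-\gamma}\partial w\rangle_{L^2_\gamma},\qquad \ell(w):=\int_\Dm f\,\overline{\partial w}\,|\d z|^2,
\]
so that $|\ell(w)|\le\|f\|_{L^2_\gamma(\Dm)}\|\partial w\|_{L^2_{-\gamma}(\Dm)}\le\|f\|_{L^2_\gamma(\Dm)}\|w\|_{H^{1,-\gamma}(\Dm)}$. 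Granting coercivity of $a$ (discussed below), Lax--Milgram gives a unique $v$ with $a(v,w)=\ell(w)$ for all $w$ and $\|v\|_{H^{1,-\gamma}(\Dm)}\le C\|f\|_{L^2_\gamma(\Dm)}$, which is exactly the stability estimate \eqref{eq:stability}. Setting $g:=f-d^{-\gamma}\partial v$, the Euler--Lagrange identity $a(v,w)=\ell(w)$ reads $\langle g,d^{-\gamma}\partial w\rangle_{L^2_\gamma}=0$ for all $w$; testing against $w\in C_c^\infty(\Dm^\circ)$ and integrating by parts shows $\dbar g=0$ distributionally, so $g\in\mathcal H$. Orthogonality of the two summands is the computation above, and uniqueness follows since a second decomposition would force $d^{-\gamma}\partial(v_1-v_2)\in\mathcal H\cap\mathcal H^\perp=\{0\}$, whence $v_1=v_2$ by coercivity.

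The coercivity estimate is the crux and the one genuine obstacle. Using the Poincaré inequality of Appendix \ref{sec:poincare}, $\|v\|_{L^2_{-\gamma}(\Dm)}\le C\|\nabla v\|_{L^2_{-\gamma}(\Dm)}$, the $H^{1,-\gamma}$-norm is equivalent to $\|\nabla v\|_{L^2_{-\gamma}(\Dm)}$; since $\|\nabla v\|_{L^2_{-\gamma}(\Dm)}^2=2\big(\|\partial v\|_{L^2_{-\gamma}(\Dm)}^2+\|\dbar v\|_{L^2_{-\gamma}(\Dm)}^2\big)$, coercivity $a(v,v)=\|\partial v\|_{L^2_{-\gamma}(\Dm)}^2\gtrsim\|v\|_{H^{1,-\gamma}(\Dm)}^2$ is equivalent to the reverse elliptic estimate $\|\dbar v\|_{L^2_{-\gamma}(\Dm)}\le C\|\partial v\|_{L^2_{-\gamma}(\Dm)}$ for $v\in H^{1,-\gamma}_0(\Dm)$. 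I would establish this by expanding $v=\sum_{n}v_n(r)e^{in\theta}$, which diagonalizes both Dirichlet forms; an integration by parts in $r$ (legitimate since each $v_n$ vanishes at $r=1$ and is regular at $r=0$) yields the exact identity
\[
\|\partial v\|_{L^2_{-\gamma}(\Dm)}^2-\|\dbar v\|_{L^2_{-\gamma}(\Dm)}^2=-2\pi\gamma\sum_{n}n\int_0^1|v_n|^2\,r\,d^{-\gamma-1}\,\d r,
\]
whose right-hand side carries the singular weight $d^{-\gamma-1}$. The substitution $v_n=r^{-n}\psi_n$ turns the $\partial$-mode $v_n'+\tfrac nr v_n$ into $r^{-n}\psi_n'$, so that $\|\partial v\|_{L^2_{-\gamma}(\Dm)}^2$ becomes a sum of genuine weighted Dirichlet integrals $\int_0^1|\psi_n'|^2\,r^{1-2n}d^{-\gamma}\,\d r$; controlling the singular terms above by these amounts to a family of one-dimensional weighted Hardy inequalities, and the decisive point is that their constants can be taken uniform in $n$, the delicate regime being the boundary layer of width $\sim 1/n$ at $r=1$ for high modes. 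The low modes, including $n=0$, reduce directly to the Appendix Poincaré inequality. I expect this uniform-in-$n$ Hardy bound to be the main difficulty; once it is in hand the rest is formal.

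Finally, the conjugate decomposition follows by applying the result to $\bar f$: writing $\bar f=d^{-\gamma}\partial w+h$ with $h\in L^2_\gamma(\Dm)\cap\ker\dbar$ and conjugating gives $f=d^{-\gamma}\dbar\bar w+\bar h$ with $\bar h\in L^2_\gamma(\Dm)\cap\ker\partial$, and the stability estimate transfers verbatim.
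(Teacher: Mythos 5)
Your construction is, in outline, the same as the paper's: the paper also proceeds variationally, with the bounded symmetric form $B(v,u)=(\partial v,-\partial u)_{L^2_{-\gamma}(\Dm)}$ on $H^{1,-\gamma}_0(\Dm)$, obtains $v$ by Riesz representation together with a norm bound, sets $g:=f-d^{-\gamma}\partial v$, concludes $\dbar g=0$ distributionally (hence $g\in L^2_\gamma(\Dm)\cap\ker\dbar$ by elliptic regularity), and proves orthogonality by integration by parts on a dense subspace. All the formal parts of your write-up are correct: closedness of $\mathcal H$, boundedness of $a$ and $\ell$, the distributional identification of $g$, the orthogonality computation, uniqueness, and the conjugation step. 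The genuine gap is the one you name and then defer: coercivity of $a$ is never established. Since existence of $v$, the stability estimate \eqref{eq:stability}, and even your uniqueness argument (which invokes coercivity to conclude $v_1=v_2$ from $\partial(v_1-v_2)=0$) all rest on it, the proposal is incomplete at precisely its decisive analytic step; ``I expect this uniform-in-$n$ Hardy bound to be the main difficulty'' is a plan, not a proof.

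That said, your reduction of the coercivity question is correct, and in fact sharper than the paper's treatment of the same point. Given Lemma \ref{lem:poincare}, coercivity of $a$ on $H^{1,-\gamma}_0(\Dm)$ is indeed equivalent to the reverse estimate $\|\dbar v\|_{L^2_{-\gamma}(\Dm)}\le C\|\partial v\|_{L^2_{-\gamma}(\Dm)}$, and your defect identity (which checks out) shows this is not automatic: for $\gamma\neq 0$ the two energies differ by the singular term $-2\pi\gamma\sum_n n\int_0^1|v_n|^2 d^{-\gamma-1}r\,\d r$. The paper dispatches this with the single sentence that the form is coercive ``as a consequence of Lemma \ref{lem:poincare}''; as your reduction makes plain, the Poincar\'e inequality bounds $\|v\|_{L^2_{-\gamma}}$ by $\|\nabla v\|_{L^2_{-\gamma}}$ but by itself gives no control of the antiholomorphic derivative by the holomorphic one in the weighted space, so the weighted-Hardy content you isolate is genuinely needed. (For what it is worth, the estimate does appear to hold for all $\gamma\in(-1,1)$: in the boundary-layer scaling $1-r\sim x/n$ the $n$-dependence of the two mode-$n$ weights cancels, so a Muckenhoupt-type verification is plausible uniformly in $n$, and for small $|\gamma|$ a crude Cauchy--Schwarz/Hardy perturbation already suffices.) In short: you correctly located the crux that the paper elides, but locating it is not closing it; a complete proof requires actually establishing $\|\dbar v\|_{L^2_{-\gamma}(\Dm)}\le C\|\partial v\|_{L^2_{-\gamma}(\Dm)}$ for $v\in C_c^\infty(\Dm^\circ)$, which your submission does not do.
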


The proof of Proposition \ref{prop:decomp} relies on a Poincar\'e inequality proved in Appendix \ref{sec:poincare}.

\begin{lemma}[Poincar\'e inequality]\label{lem:poincare}
Let $\gamma\in(-1,1)$. Then there exists a constant $C$ such that
    \begin{align}
        \|u\|_{L^2_\gamma(\mathbb{D})} \leq C\|\nabla u\|_{L^2_\gamma(\mathbb{D})}, \qquad u\in H^{1,\gamma}_0(\mathbb{D}).
    \end{align}
\end{lemma}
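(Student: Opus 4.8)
The plan is to reduce the two-dimensional estimate to a one-dimensional weighted Hardy inequality along radial rays. Since $H^{1,\gamma}_0(\Dm)$ is by definition the completion of $C_c^\infty(\Dm^\circ)$, and both $u\mapsto\|u\|_{L^2_\gamma(\Dm)}$ and $u\mapsto\|\nabla u\|_{L^2_\gamma(\Dm)}$ are continuous with respect to the $H^{1,\gamma}$-norm, it suffices to establish the inequality for $u\in C_c^\infty(\Dm^\circ)$ and then pass to the limit.

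First I would pass to polar coordinates $z=re^{i\phi}$, in which $d=1-r^2$ and the area element is $r\,\d r\,\d\phi$. Fixing $\phi$ and using that $u$ vanishes near $r=1$, the fundamental theorem of calculus gives $u(r,\phi)=-\int_r^1\partial_\rho u(\rho,\phi)\,\d\rho$. The goal then becomes the one-dimensional estimate
\[
\int_0^1 |u(r,\phi)|^2 (1-r^2)^\gamma r\,\d r \le C \int_0^1 |\partial_r u(r,\phi)|^2 (1-r^2)^\gamma r\,\d r,
\]
with $C$ independent of $\phi$; integrating over $\phi\in[0,2\pi)$ and using $|\partial_r u|\le|\nabla u|$ then yields the lemma.

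This one-dimensional estimate is precisely the (classical) weighted Hardy inequality for the operator $(Tf)(r)=\int_r^1 f(\rho)\,\d\rho$ integrating from the right endpoint, applied to $f=\partial_r u$, with the common weight $w(r):=(1-r^2)^\gamma r$ on both sides. For $p=2$ this holds if and only if the Muckenhoupt-type quantity
\[
A := \sup_{0<t<1}\left(\int_0^t w(r)\,\d r\right)\left(\int_t^1 w(r)^{-1}\,\d r\right)
\]
is finite, so the crux is to verify $A<\infty$ by examining the two endpoints. As $t\to 1$, one has $\int_0^t w\to\int_0^1 (1-r^2)^\gamma r\,\d r=\tfrac{1}{2(\gamma+1)}$, finite exactly because $\gamma>-1$, while $\int_t^1 w^{-1}\,\d r=\int_t^1 (1-r^2)^{-\gamma} r^{-1}\,\d r\to 0$, the integrand being integrable near $r=1$ exactly because $\gamma<1$; thus both constraints $\gamma\in(-1,1)$ enter here. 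As $t\to 0$, $\int_0^t w\sim t^2/2$ while $\int_t^1 w^{-1}\,\d r\sim\log(1/t)$, so the product tends to $0$. Since $A(t)$ is continuous on $(0,1)$ and bounded at both endpoints, $A<\infty$, and the resulting constant depends only on $\gamma$, hence is uniform in $\phi$ as required.

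The main obstacle is the verification of the Muckenhoupt condition, and in particular disentangling the two distinct potential sources of blow-up: the genuine degeneration of the weight at the outer boundary $r=1$, which is where the endpoint restrictions $\gamma>-1$ and $\gamma<1$ become sharp, and the spurious singularity $w^{-1}\sim r^{-1}$ at the disk's center arising from the polar Jacobian, which is harmless because it is only logarithmic and is dominated by the $t^2$ vanishing of $\int_0^t w$. Finally, I would remark that since the interval $(-1,1)$ is symmetric under $\gamma\mapsto-\gamma$, the same statement holds verbatim for $H^{1,-\gamma}_0(\Dm)$, which is the form actually invoked in the proof of Proposition \ref{prop:decomp}.
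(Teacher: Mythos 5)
Your proposal is correct, and it follows the paper's reduction exactly up to the key step: polar coordinates, the fundamental theorem of calculus along radial rays from the boundary (where $u\in C_c^\infty(\Dm^\circ)$ vanishes), a one-dimensional weighted inequality uniform in the angle, then $|\partial_r u|\le|\nabla u|$ and integration in the angular variable, with the density argument handling general $u\in H^{1,\gamma}_0(\Dm)$. Where you diverge is in how the one-dimensional inequality is established. The paper proves it by hand: Cauchy--Schwarz with the weight split $(1-s^2)^{\gamma/2}\cdot(1-s^2)^{-\gamma/2}$, the bound $\int_r^1(1-s^2)^{-\gamma}\,\d s\le \frac{C_0}{1-\gamma}(1-r^2)^{1-\gamma}$ (this is where $\gamma<1$ enters), and then Fubini; this keeps the argument self-contained and produces an explicit constant $C^2=C_0/(1-\gamma)$. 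You instead recognize the inequality as a weighted Hardy inequality for $Tf(r)=\int_r^1 f(\rho)\,\d\rho$ and invoke the Muckenhoupt--Talenti--Tomaselli characterization, reducing everything to the finiteness of the quantity $A$; your endpoint analysis is accurate (integrability of $w$ needs $\gamma>-1$; integrability of $w^{-1}$ near $r=1$ needs $\gamma<1$; the $r^{-1}$ singularity of $w^{-1}$ at the origin contributes only a logarithm, dominated by the $t^2$ vanishing of $\int_0^t w$). What your route buys: modularity, a transparent accounting of where each endpoint of $\gamma\in(-1,1)$ is used, and---since the Muckenhoupt condition is necessary as well as sufficient---the observation that the radial inequality genuinely fails outside this range, so the hypothesis is sharp for this method. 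What it costs: reliance on a nontrivial classical theorem where the paper needs only Cauchy--Schwarz and Fubini, and a non-explicit constant. Your closing remark that the statement for $\gamma$ yields the statement for $-\gamma$ is also correct, and that is indeed the form in which the lemma is applied in Proposition \ref{prop:decomp}.
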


\begin{proof}[Proof of Proposition \ref{prop:decomp}]
Consider the symmetric bilinear form $B(v, u) = (\partial v, -\partial u)_{L^2_{-\gamma}(\mathbb{D})}$ which is bounded on $H^{1,-\gamma}_0(\mathbb{D})$. As a consequence of Lemma \ref{lem:poincare}, this form is coercive. The linear functional $f^* \colon H^{1,-\gamma}_0(\mathbb{D}) \to \mathbb{R}$ defined by $f^*(u)=(f, -\partial u)_{L^2(\mathbb{D})}$ is also bounded by the estimate
\begin{align}
|f^*(u)|
= |(f, \partial u)_{L^2(\mathbb{D})}|
= |(fd^{\gamma/2},d^{-\gamma/2}\partial u)_{L^2(\mathbb{D})}|
\leq \|f\|_{L^2_\gamma(\mathbb{D})}\|u\|_{H^{1,-\gamma}_0(\mathbb{D})}.
\end{align}
By the Riesz Representation Theorem, there exists a unique $v\in H^{1,-\gamma}_0(\mathbb{D})$ such that
\begin{align}
f^*(u) = B(v,u) \quad \text{for all $u\in H^{1,-\gamma}_0(\mathbb{D})$},
\end{align}
together with the norm estimate $\|v\|_{H^{1,-\gamma}_0(\mathbb{D})}=\|f^*\| \leq\|f\|_{L^2_\gamma(\mathbb{D})}$. Since we have that $(f,-\partial u) = (\partial v, -\partial u)_{L^2_{-\gamma}(\Dm)}$ for every $u\in C_c^\infty(\Dm^\circ)$, then we have $\dbar (f - d^{-\gamma} \partial u) =0$ in the sense of distributions. Thus, setting $g:=f-d^{-\gamma}\partial v$, we have $\dbar g=0$, which by elliptic regularity implies $g\in C^\infty(\Dm^\circ)$. Further, since both $f$ and $d^{-\gamma} \partial v$ belong to $L^2_\gamma(\Dm)$, so does $g$, hence $g\in L^2_\gamma(\Dm)\cap \ker\dbar$.

Finally, we show the orthogonality statement
\begin{align}
    (d^{-\gamma}\partial v, g)_{L^2_\gamma(\Dm)} = 0, \quad v\in H^{1,-\gamma}_0 (\Dm), \qquad g\in L^2_\gamma(\Dm) \cap \ker \dbar,
    \label{eq:orthogonality}
\end{align}
which in particular implies the stability estimate \eqref{eq:stability}. To see this, notice that the left side is a bounded bilinear form on $H^{1,-\gamma}_0(\Dm)\times (L^2_\gamma(\Dm) \cap \ker\dbar)$, vanishing identically on the dense subspace $C_c^\infty(\Dm^\circ) \times (C^\infty(\Dm)\cap \ker\dbar)$, as can be seen from a integration by parts with no boundary term. Hence this identity holds on $H^{1,-\gamma}_0(\Dm)\times (L^2_\gamma(\Dm) \cap \ker\dbar)$.
\end{proof}

\subsection{Tensor fields and fiberwise Fourier analysis} \label{sec:fibFourier}

In order to apply Proposition \ref{prop:decomp} to produce tensor field decompositions, we first recall how to view symmetric tensors under the lens of fiberwise Fourier analysis. Through the global chart \eqref{eq:SDchart} on $S\Dm$, functions in $S\Dm$ can be represented in terms of their fiberwise Fourier decomposition 
\begin{align}
    f(z,\theta) = \sum_{k\in \Zm} f_k(z) e^{ik\theta}, \qquad f_k(z) = \frac{1}{2\pi}\int_0^{2\pi} f(z,\theta)\d\theta. 
    \label{eq:fibFourier}
\end{align}
When $f\in C^\infty(S\Dm)$ (resp. $L^2_\gamma (S\Dm)$), then for every $k\in \Zm$, $f_k\in C^\infty(\Dm)$ (resp. $L^2_\gamma (\Dm)$). This induces decompositions
\begin{align}
    C^\infty(S\Dm) &= \bigoplus_{k\in \Zm} \Omega_k, \qquad \Omega_k := \{f(z) e^{ik\theta},\ f\in C^\infty(\Dm)\}, \\
    L^2_\gamma(S\Dm) &= \bigoplus_{k\in \Zm} H_{\gamma,k}, \qquad H_{\gamma,k} := \{f(z) e^{ik\theta},\ f\in L^2_\gamma (\Dm)\}.
\end{align}
The geodesic and transverse vector fields 
\begin{align*}
    X = \cos\theta \partial_x + \sin\theta\partial_y, \qquad X_\perp = [X,\partial_\theta] = \sin\theta \partial_x - \cos\theta \partial_y,
\end{align*}
can be written in terms of the so-called Guillemin-Kazhdan operators $\eta_\pm = \frac{1}{2} (X\pm iX_\perp)$, introduced in \cite{Guillemin1980} in the context of Riemannian surfaces, and whose expression is given by
\begin{align}
\eta_-u = (\partial u)e^{-i\theta}\quad\text{and}\quad
\eta_+u = (\dbar u)e^{i\theta}.
\end{align}
In particular, we have $\eta_\pm (\Omega_k)\subset \Omega_{k\pm 1}$ for all $k\in \Zm$. 

For $m\ge 0$, it is well-known (see, e.g. \cite[Chapter 6]{paternain_salo_uhlmann_2023}) that the map $\ell_m$ defined in \eqref{eq:ellm} induces an isomorphism, 
\begin{align}
    \ell_m &\colon L^2_\gamma (\Dm, S^m(T^*\Dm)) \stackrel{\approx}{\longrightarrow} \bigoplus_{k=0}^m H_{\gamma,m-2k}, \label{eq:ellm_iso1}
\end{align}
and, in addition, for traceless tensors and tt tensors:
\begin{align}
    \ell_m &\colon L^2_\gamma (\Dm, S^m(T^*\Dm)) \cap \ker \tr \stackrel{\approx}{\longrightarrow} H_{\gamma,-m} \oplus H_{\gamma,m}, \quad (m\ge 2), \label{eq:ellm_iso2} \\
    \ell_m &\colon L^2_\gamma (\Dm, \Stt^m(T^*\Dm)) \stackrel{\approx}{\longrightarrow} (H_{\gamma,-m}\cap \ker \eta_+) \oplus (H_{\gamma,m} \cap \ker \eta_-), \quad (m\ge 1). \label{eq:ellm_iso3}
\end{align}

From \cite[Lemma 6.3.2]{paternain_salo_uhlmann_2023}, one has
\begin{align}
    \begin{split}
        X\circ \ell_m &= \ell_{m+1}\circ \d^s \quad \text{on} \quad C^\infty(\Dm; S^m(T^*\Dm)), \quad m\ge 0, \\
        X_\perp\circ\ell_0 &= -\ell_1 \circ \star\d \quad \text{on} \quad C^\infty(\Dm),
    \end{split}
    \label{eq:ellintertwiners}
\end{align}
where $\star$ is defined in \eqref{eq:Hodge}.

\subsection{Decompositions. Proofs of Lemma \ref{lem:oneformdecompSD}, Theorem \ref{thm:Yorkdecomp}, and Corollary \ref{cor:decomposition}} \label{sec:proofdecomp}

We begin with the decomposition results for one-forms.

\begin{proof}[Proof of Lemma \ref{lem:oneformdecompSD}] Let $w \in L^2_\gamma(\Dm, S^1(T^*\Dm))$, then $\ell_1 w = w_{-1} e^{-i\theta} + w_1 e^{i\theta}$. 

(1) Apply the decomposition of Proposition \ref{prop:decomp}
and its complex conjugate to obtain
\begin{align}
w_1 = d^{-\gamma}\partial v_+ + g_1 \quad\text{and}\quad
w_{-1} = d^{-\gamma} \overline{\partial} v_{-} + g_{-1},
\end{align}
where $v_\pm\in H^{1,-\gamma}_0(\Dm)$, $g_{\pm 1}\in L^2_\gamma(\Dm)$,
and $\overline{\partial} g_{1} = \partial g_{-1}=0$. We then write
\begin{align}
    \ell_1 w &= w_{-1} e^{-i\theta} + w_1 e^{i\theta} \\
    &= (d^{-\gamma} \overline{\partial} v_{-} + g_{-1}) e^{-i\theta} + (d^{-\gamma}\partial v_+ + g_1 ) e^{i\theta}\\
    &= d^{-\gamma}(\eta_- v_{-} + \eta_+ v_+) + \ell_1 \tilde{g}_1,
\end{align}
where $\tilde{g}_1 \coloneq g_1\d z + g_{-1}\d\zbar\in L^2(\Dm, \Stt^1(T^*\Dm))$. We now use that $\eta_\pm = \frac{1}{2} (X\pm i X_\perp)$ to write
\begin{align}
    \eta_- v_{-} + \eta_+v_+ &= X\left(\frac{v_+ +v_-}{2}\right)\! + X_\perp\!\left( \frac{v_+-v_-}{2i} \right) \stackrel{\eqref{eq:ellintertwiners}}{=} \ell_1 \left( \d \frac{v_++v_{-}}{2} + \star\d \frac{v_--v_{+}}{2i}\right),
\end{align}
and hence setting $g_0 \coloneq (v_+ +v_{-})/2$ and
$g_s\coloneq (v_--v_{+})/(2i)$ completes the proof. 

(2) Apply the decomposition of Proposition \ref{prop:decomp} to $w_{-1}$, writing
\[ w_{-1} = d^{-\gamma} \dbar f + \tilde{w}_{-1}, \quad f\in H^{1,-\gamma}_0(\Dm),\ \tilde{w}_{-1} \in L^2_\gamma(\Dm)\cap \ker \partial, \]
and observe that 
\begin{align*}
    \ell_1 w &= w_{-1} e^{-i\theta} + w_1 e^{i\theta} \\
    &= (d^{-\gamma} \dbar f + \tilde{w}_{-1}) e^{-i\theta} + w_1 e^{i\theta} \\
    &= d^{-\gamma} X f + \tilde{w}_{-1}e^{-i\theta} + (w_1 - \partial f) e^{i\theta} \\
    &= \ell_1 (d^{-\gamma} \d f + \tilde{w}_{-1}\d\zbar + \tilde{w}_1 \d z),
\end{align*}
upon defining $\tilde{w}_1:= w_1 - \partial f\in L^2_\gamma(\Dm)$. The proof is complete.
\end{proof}

We now move to higher-order tensor field decompositions and the proof of Theorem \ref{thm:Yorkdecomp}.

\begin{proof}[Proof of Theorem \ref{thm:Yorkdecomp}]\label{pf:Yorkproof}
Let $f\in L^2_\gamma(\Dm; S^m(T^*\Dm))$ and write 
\begin{align*}
    \ell_m f = f_{-m} e^{-im\theta} + \ell_{m} Lh + f_{m} e^{im\theta}, \qquad f_{\pm m} \in L^2_\gamma(\Dm),\quad h\in L^2_\gamma(\Dm; S^{m-2}(T^*\Dm)).
\end{align*}
By Proposition \ref{prop:decomp}, decompose $f_m,f_{-m}$ as 
\begin{align*}
    f_m = d^{-\gamma}\partial q_{m-1} + g_m, \qquad f_{-m} = d^{-\gamma}\overline{\partial} q_{-m+1} + g_{-m},
\end{align*}
where $q_{\pm(m-1)}\in H^{1,-\gamma}_0(\Dm)$ and $g_{\pm m}\in L^2_\gamma(\Dm)$ satisfy $\dbar g_m = \partial g_{-m}=0$. We write
\begin{align}
    f_{-m} e^{-im\theta} + f_m e^{im\theta} = g_{-m} e^{-im\theta} + g_m e^{im\theta} + d^{-\gamma} (e^{im\theta}\partial q_{m-1} + e^{-im\theta}\dbar q_{-m+1})
    \label{eq:temp}
\end{align}
The first two terms can be recast as a tt $m$-tensor: denoting $\tilde{f} \coloneq g_m \d z^m + g_{-m}\d\zbar^m$, we have $\tilde{f}\in L^2_\gamma (\Dm, \Stt^m(T^*\Dm))$ and $\ell_m \tilde{f} = g_m e^{im\theta} + g_{-m} e^{-im\theta}$. Using $X = e^{i\theta}\partial+e^{-i\theta}\dbar$, we work on the last two terms in \eqref{eq:temp}, writing
\begin{align*}
    e^{im\theta}\partial q_{m-1} &+ e^{-im\theta}\dbar q_{-m+1} \\
    &= X(e^{i(m-1)\theta}q_{m-1}+e^{-i(m-1)\theta}q_{-m+1}) - (e^{i(m-2)\theta} \dbar q_{m-1} + e^{i(-m+2)\theta} \partial q_{-m+1}) \\
    &\!\! \stackrel{\eqref{eq:ellintertwiners}}{=} \ell_m \d^s (q_{m-1}\d z^{m-1} + q_{-m+1}\d\zbar^{m-1}) - \ell_m L (\dbar q_{m-1} \d z^{m-2} + \partial q_{-m+1} \d\zbar^{m-2}),
\end{align*}
hence the decomposition $f = d^{-\gamma} \d^s q + L \lambda + \tilde{f}$ follows upon setting 
\[ q\coloneq q_{m-1}\d z^{m-1} + q_{-m+1}\d\zbar^{m-1} \in H^{1,-\gamma}_0 (\Dm, S^{m-1}(T^*\Dm)) \cap \ker \tr, \]
and $\lambda\coloneq h - \dbar q_{m-1} \d z^{m-2} + \partial q_{-m+1} \d\zbar^{m-2} \in L^2_\gamma(\Dm, S^{m-2}(T^*\Dm))$.
\end{proof}

We now prove Corollary \ref{cor:decomposition} by iteration of Theorem \ref{thm:Yorkdecomp}.
\begin{proof}[Proof of Corollary \ref{cor:decomposition}]
In base cases $m=1$ and $m=2$, Lemma \ref{lem:oneformdecompSD} and
Theorem \ref{thm:Yorkdecomp} yield decompositions in the desired forms, respectively. Now that base cases are satisfied, suppose tensors of rank strictly less than $m$ decompose as in the statement. For $f\in L^2_\gamma(\Dm,S^m(T^* \Dm))$ with $m\ge 2$, apply Theorem \ref{thm:Yorkdecomp} to decompose
\begin{align}
f = d^{-\gamma}\d^s q_{m-1} + L\lambda + \widetilde{f}_m, \label{eq:fyorkdecomposed}
\end{align}
where $q_{m-1}\in H^{1,-\gamma}_0(\Dm, S^{m-1}(T^* \Dm))$ satisfies $\tr(q_{m-1}) = 0$, $\widetilde{f}_m\in L^2_\gamma (\Dm, \Stt^m(T^* \Dm))$ and $\lambda\in L^2_\gamma (\Dm; S^{m-2}(T^*\Dm))$. Applying the induction hypothesis to $\lambda$, there exist $q\in H^{1,-\gamma}_0(\Dm, S^{m-3}(T^* \Dm))$ and $\lambda^{\rm itt}\in L^2_\gamma(\Dm; S^{m-2}(T^* \Dm))$ (of either form \eqref{eq:fitt_even} or \eqref{eq:fitt_odd} depending on the parity of $m$) such that 
\begin{align*}
    \lambda = d^{-\gamma} \d^s q + \lambda^{\rm itt}.
\end{align*}
Returning to $f$, and using that $L\circ (d^{-\gamma}\d^s) = (d^{-\gamma}\d^s)\circ L$, this gives the decomposition
\begin{align*}
    f = d^{-\gamma} \d^s (q_{m-1}+Lq_{m-3}) + L \lambda^{\rm itt} + \widetilde{f}_m.
\end{align*}
Setting $\fitt := L \lambda^{\rm itt} + \widetilde{f}_m$, we find that if $\lambda^{\rm itt}$ takes the form \eqref{eq:fitt_even} if $m-2$ is even (resp. \eqref{eq:fitt_odd} if $m-2$ is odd), then $\fitt$ takes a similar form at degree $m$, hence fulfilling the induction step. 
\end{proof}

\section{Range decomposition} \label{sec:rangeDecomp}

A natural family of orthogonal polynomials parameterizing $L^2_\gamma(\Dm)$ is the generalized Zernike polynomials, which are recovered by application of the backprojection operator.

\begin{definition}[Generalized Zernike Polynomials]\label{def:Znk}
With $\psi_{n,k}^\gamma$ defined in \eqref{def:psink}, for $n\geq 0$ and $0\leq k\leq n$, we define 
\begin{align}
Z_{n,k}^\gamma(z) := \int_0^{2\pi} \mu^{-2\gamma-1}\psi_{n,k}^\gamma (\pif(z,\theta))\ \d\theta, \quad z\in \Dm,
\label{eq:Znk}
\end{align}
where $\pif$ is defined in Eq. \eqref{eq:piF}. Such functions can be seen to be proportional to the generalized disk Zernike polynomials as defined in  \cite{Wuensche2005}. It was shown in \cite[Theorem 1]{Mishra2022} that the functions $\{Z_{n,k}^\gamma\}_{n\ge 0\le k\le n}$ defined Eq. \eqref{eq:Znk} form a basis of $L^2_\gamma(\Dm)$. Moreover, in the present normalization of $\psi_{n,k}^\gamma$, the Singular Value Decomposition of $I_0 d^\gamma$ reads
\begin{align}
    I_0 d^\gamma \wtZ_{n,k}^\gamma = \sigma_{n,k}^\gamma\psi_{n,k}^\gamma, \qquad n\ge 0,\ 0\le k\le n,
    \label{eq:SVDI0}
\end{align}
where $\sigma_{n,k}^\gamma>0$ is uniquely defined by \eqref{eq:I0range}. Moreover, in the present normalization convention, we have
\begin{align}
    \|Z_{n,k}^\gamma\|_{L^2_\gamma}^2 = (\sigma_{n,k}^\gamma)^2, \quad n\ge 0,\ 0\le k\le n. 
    \label{eq:normZnk}
\end{align}
 \end{definition}

\subsection{Description of the range over tt $m$-tensors}\label{ssec:rangeDecomptt}
To deduce the action of $Id^\gamma$ on tensors, we first document a shifting property of the $d^\gamma$-weighted X-ray transform
\begin{lemma}\label{lem:xrayshift}
If $f\in L^2_\gamma(\Dm)$ and $p\in \Zm$, then
\begin{align}
    I d^\gamma (f e^{pi\theta}) = e^{pi(\beta+\alpha+\pi)} I_0(d^\gamma f).
    \label{eq:shifting}    
\end{align}

\end{lemma}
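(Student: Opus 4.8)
The plan is to unwind both sides directly from the definition \eqref{eq:Idgamma} of $Id^\gamma$ and the parameterization \eqref{eq:geoflow} of the geodesic flow, tracking the single extra factor $e^{pi\theta}$ through the integral. The key observation is that the angle $\theta$ along the geodesic $\varphi_t(\beta,\alpha)$ is \emph{constant}: from \eqref{eq:geoflow} we have $\theta = \beta+\pi+\alpha$ independent of $t$, since Euclidean geodesics are straight lines that do not rotate their direction. This is exactly what makes the factor $e^{pi\theta}$ pull out of the $t$-integral.

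Concretely, I would write
\begin{align*}
    Id^\gamma(f e^{pi\theta})(\beta,\alpha)
    &= \int_0^{2\cos\alpha} (f e^{pi\theta})(\varphi_t(\beta,\alpha))\, d^\gamma(z_{\beta,\alpha}(t))\, \d t \\
    &= \int_0^{2\cos\alpha} f(z_{\beta,\alpha}(t))\, e^{pi(\beta+\pi+\alpha)}\, d^\gamma(z_{\beta,\alpha}(t))\, \d t,
\end{align*}
where in the second line I substitute $\theta = \beta+\pi+\alpha$ from \eqref{eq:geoflow}. Since $e^{pi(\beta+\alpha+\pi)}$ is independent of the integration variable $t$, it factors out of the integral, leaving exactly $e^{pi(\beta+\alpha+\pi)}\int_0^{2\cos\alpha} (d^\gamma f)(z_{\beta,\alpha}(t))\, \d t$. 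Comparing this last integral with \eqref{eq:I0dgamma} (equivalently, with the $m=0$ case of \eqref{eq:Idgamma} applied to the function $f$ viewed as lifted trivially to $S\Dm$), one recognizes it as $I_0(d^\gamma f)(\beta,\alpha)$, yielding \eqref{eq:shifting}.

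There is essentially no obstacle here: the entire content is the constancy of $\theta$ along Euclidean geodesic segments, which is built into the flow \eqref{eq:geoflow}. The only point deserving a word is the passage from smooth $f$ to general $f\in L^2_\gamma(\Dm)$; since both sides of \eqref{eq:shifting} are bounded operators (by Proposition \ref{prop:Ibounded} on the left and Corollary \ref{cor:Imbounded} together with boundedness of $I_0 d^\gamma$ on the right, the multiplication by the unimodular factor $e^{pi(\beta+\alpha+\pi)}$ being an isometry between $L^2_{\gamma,+}(\inward)$ and a shifted copy), the identity extends from a dense subspace $C^\infty(\Dm)$ by continuity. I expect the verification to be a short display followed by this density remark.
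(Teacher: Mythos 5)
Your proposal is correct and follows essentially the same route as the paper: the entire content is that $\theta$ is constant along Euclidean geodesics, i.e. $\theta = (\beta+\alpha+\pi)\circ\pi_F$, so the factor $e^{pi\theta}$ pulls out of the integral defining $Id^\gamma$. The paper states this in one line; your added density remark for general $f\in L^2_\gamma(\Dm)$ is a harmless (and reasonable) elaboration of the same argument.
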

\begin{proof} Since the coordinate $\theta$ is a geodesically invariant quantity, this is a direct consequence of the relation
\[ \theta = (\beta+\alpha+\pi) \circ \pi_F, \]
which allows to take $e^{ip\theta}$ outside the integral defining $I d^\gamma$.
\end{proof}

Lemma \ref{lem:xrayshift} can be used to deduce the action of $I_m d^\gamma$ on a spanning set of symmetric $m$-tensors.
\begin{lemma}\label{lem:ISVD}
Suppose $n\geq 0$, $0\leq k\leq n$, and $m>0$ are integers. If $m = 2p$, then $I_m d^\gamma$ maps
\begin{align}
\wtZ_{n,k}^\gamma \d z^m &\mapsto \sigma_{n,k}^\gamma \psi_{n,k-p}^{\gamma,+}
\quad\text{and}\quad
\wtZ_{n,k}^\gamma \d \zbar^m \mapsto \sigma_{n,k}^\gamma \psi_{n,k+p}^{\gamma,+}.
\end{align}
If $m=2p+1$, then $I_m d^\gamma$ maps
\begin{align}
\wtZ_{n,k}^\gamma \d z^m &\mapsto \sigma_{n,k}^\gamma \psi_{n,k-p}^{\gamma,-}
\quad\text{and}\quad
\wtZ_{n,k}^\gamma \d \zbar^m \mapsto \sigma_{n,k}^\gamma \psi_{n,k+p+1}^{\gamma,-}.
\end{align}
\end{lemma}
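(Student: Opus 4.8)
The plan is to reduce the statement entirely to the scalar SVD relation \eqref{eq:SVDI0} combined with the shifting property of Lemma \ref{lem:xrayshift}. The key observation is that under $\ell_m$, the tensors $\wtZ_{n,k}^\gamma \d z^m$ and $\wtZ_{n,k}^\gamma \d \zbar^m$ have a very clean fiberwise Fourier expression. First I would record that, by the defining formula \eqref{eq:ellm}, one has
\begin{align}
\ell_m (f \, \d z^m) = f \, e^{im\theta}, \qquad \ell_m (f \, \d \zbar^m) = f \, e^{-im\theta},
\label{eq:ellmdz}
\end{align}
for any $f\in L^2_\gamma(\Dm)$. This follows because $\d z = \d x + i\,\d y$ evaluated on the unit vector $v = \cos\theta\,\partial_x + \sin\theta\,\partial_y$ gives $\cos\theta + i\sin\theta = e^{i\theta}$, and symmetric tensor powers simply raise this to the $m$-th power; the anti-holomorphic case is the complex conjugate.

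The second step is to combine \eqref{eq:ellmdz} with the shifting lemma. Since $I_m d^\gamma f = I d^\gamma(\ell_m f)$ by the definition in Corollary \ref{cor:Imbounded}, applying Lemma \ref{lem:xrayshift} with $p = m$ (respectively $p=-m$) yields
\begin{align}
I_m d^\gamma (\wtZ_{n,k}^\gamma \, \d z^m) &= I d^\gamma (\wtZ_{n,k}^\gamma \, e^{im\theta}) = e^{im(\beta+\alpha+\pi)} \, I_0 d^\gamma \wtZ_{n,k}^\gamma, \\
I_m d^\gamma (\wtZ_{n,k}^\gamma \, \d \zbar^m) &= I d^\gamma (\wtZ_{n,k}^\gamma \, e^{-im\theta}) = e^{-im(\beta+\alpha+\pi)} \, I_0 d^\gamma \wtZ_{n,k}^\gamma.
\end{align}
Invoking the scalar SVD \eqref{eq:SVDI0}, each right-hand side becomes $\sigma_{n,k}^\gamma \, e^{\pm im(\beta+\alpha+\pi)} \psi_{n,k}^\gamma$.

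The final and only genuinely bookkeeping step is to match these expressions against the definitions \eqref{def:psink} and \eqref{eq:psinkpm} of the fan-beam polynomials $\psi_{n,\ell}^{\gamma,\pm}$. From \eqref{def:psink}, $\psi_{n,\ell}^\gamma$ carries the angular factor $e^{(n-2\ell)i(\beta+\alpha+\pi)}$, so multiplying $\psi_{n,k}^\gamma$ by $e^{im(\beta+\alpha+\pi)}$ produces the angular frequency $n-2k+m$, which equals $n - 2(k - m/2)$. When $m = 2p$ is even, this is exactly $\psi_{n,k-p}^\gamma = \psi_{n,k-p}^{\gamma,+}$ for the holomorphic case and $\psi_{n,k+p}^{\gamma,+}$ for the anti-holomorphic case (note $\hat{L}_n^\gamma$ is unchanged since $n$ is fixed). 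When $m = 2p+1$ is odd, one factor of $e^{i(\beta+\alpha+\pi)}$ must be split off to convert the even basis into the odd basis via \eqref{eq:psinkpm}: writing $m = 2p+1$, the factor $e^{im(\beta+\alpha+\pi)} = e^{i(\beta+\alpha+\pi)} e^{2pi(\beta+\alpha+\pi)}$ gives $e^{i(\beta+\alpha+\pi)}\psi_{n,k-p}^\gamma = \psi_{n,k-p}^{\gamma,-}$, and symmetrically $e^{-im(\beta+\alpha+\pi)}\psi_{n,k}^\gamma$ yields $\psi_{n,k+p+1}^{\gamma,-}$ (here the shift is $k+p+1$ rather than $k+p$ precisely because pulling out the single factor $e^{i(\beta+\alpha+\pi)}$ from $e^{-i(2p+1)(\beta+\alpha+\pi)}$ leaves $e^{-i(2p+2)(\beta+\alpha+\pi)}$, shifting the index by $p+1$). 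I expect no real obstacle here beyond carefully tracking the index arithmetic and the parity-dependent splitting of the phase factor; the substance of the lemma is entirely contained in the scalar results already established.
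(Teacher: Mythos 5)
Your proposal is correct and takes essentially the same route as the paper's own proof: both reduce the claim to the scalar SVD \eqref{eq:SVDI0} by writing $\ell_m(\wtZ_{n,k}^\gamma\,\d z^m) = \wtZ_{n,k}^\gamma e^{im\theta}$ (resp. $e^{-im\theta}$), applying the shifting property of Lemma \ref{lem:xrayshift}, and then carrying out the index bookkeeping $e^{\pm 2pi(\beta+\alpha+\pi)}\psi_{n,k}^\gamma = \psi_{n,k\mp p}^\gamma$ with the parity-dependent splitting of one phase factor via \eqref{eq:psinkpm}, including the correct $k+p+1$ shift in the odd anti-holomorphic case.
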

\begin{proof}
Identify the basis $\{Z_{n,k}^\gamma \d z^m, Z_{n,k}^\gamma \d \zbar^m\}$ of $L^2_\gamma(\Dm; S^m(T^*\Dm))$. Let $m=2p$ be an even integer. By Eq. \eqref{eq:Stt}, one has $\ell_1(\d z) = e^{i\theta}$ and $\ell_1(\d\zbar)=e^{-i\theta}$. Then
\begin{align}
I_m d^\gamma \wtZ_{n,k}^\gamma \d z^m
&= I(d^\gamma \ell_1(\wtZ_{n,k}^\gamma\d z^m))
= I(d^\gamma \wtZ_{n,k}^\gamma e^{mi\theta})\\
&= I(d^\gamma \wtZ_{n,k}^\gamma)e^{mi(\beta+\alpha+\pi)}
= \sigma_{n,k}^\gamma \psi_{n,k}^\gamma e^{2pi(\beta+\alpha+\pi)}
= \sigma_{n,k}^\gamma \psi_{n,k-p}^\gamma.
\end{align}
and
\begin{align}
I_m d^\gamma \wtZ_{n,k}^\gamma \d \zbar^m
&= I(d^\gamma \ell_1(\wtZ_{n,k}^\gamma\d \zbar^m))
= I(d^\gamma \wtZ_{n,k}^\gamma e^{-mi\theta})\\
&= I(d^\gamma \wtZ_{n,k}^\gamma)e^{-mi(\beta+\alpha+\pi)}
= \sigma_{n,k}^\gamma \psi_{n,k}^\gamma e^{-2pi(\beta+\alpha+\pi)}
= \sigma_{n,k}^\gamma \psi_{n,k+p}^\gamma.
\end{align}

If $m=2p+1$ is odd, then
\begin{align}
I_m d^\gamma \wtZ_{n,k}^\gamma \d z^m
&= I(d^\gamma \ell_1(\wtZ_{n,k}^\gamma\d z^m))
= I(d^\gamma \wtZ_{n,k}^\gamma e^{mi\theta})\\
&= I(d^\gamma \wtZ_{n,k}^\gamma)e^{mi(\beta+\alpha+\pi)}
= \sigma_{n,k}^\gamma \psi_{n,k}^\gamma e^{(2p+1)i(\beta+\alpha+\pi)}
= \sigma_{n,k}^\gamma \psi_{n,k-p}^\gamma e^{i(\beta+\alpha+\pi)}
\end{align}
and
\begin{align}
I_m d^\gamma \wtZ_{n,k}^\gamma \d \zbar^m
&= I(d^\gamma \ell_1(\wtZ_{n,k}^\gamma\d \zbar^m))
= I(d^\gamma \wtZ_{n,k}^\gamma e^{-mi\theta})
= I(d^\gamma \wtZ_{n,k}^\gamma)e^{-mi(\beta+\alpha+\pi)}\\
&= \sigma_{n,k}^\gamma \psi_{n,k}^\gamma e^{-(2p+1)i(\beta+\alpha+\pi)}
= \sigma_{n,k}^\gamma \psi_{n,k}^\gamma e^{-(2p+2-1)i(\beta+\alpha+\pi)}
= \sigma_{n,k}^\gamma \psi_{n,k+p+1}^\gamma e^{i(\beta+\alpha+\pi)}.
\end{align}
Adopting the notation of Eq. \eqref{eq:psinkpm} yields the result.
\end{proof}

\begin{lemma}\label{lem:charholomorphic}
The spaces of $L^2_\gamma(\Dm)$-integrable analytic/antianalytic functions admit the characterizations
\begin{align}
L^2_\gamma(\Dm)\cap\ker\dbar &= \left\{
\sum_{n=0}^\infty f_n z^n \;|\; \sum_{n=0}^\infty |f_n|^2B(n{+}1,\gamma{+}1) < \infty
\right\} \quad\text{and}\\
L^2_\gamma(\Dm)\cap\ker \partial &= \left\{
\sum_{n=0}^\infty f_n \zbar^n \;|\; \sum_{n=0}^\infty |f_n|^2B(n{+}1,\gamma{+}1) < \infty
\right\}.
\end{align}
\end{lemma}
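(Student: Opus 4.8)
The plan is to reduce the statement to two elementary facts about the monomials $\{z^n\}_{n\ge 0}$: that they are mutually orthogonal in $L^2_\gamma(\Dm)$, and that $\|z^n\|_{L^2_\gamma(\Dm)}^2 = \pi B(n{+}1,\gamma{+}1)$. First I would observe that an element $f\in L^2_\gamma(\Dm)\cap\ker\dbar$ solves $\dbar f=0$ distributionally on $\Dm^\circ$, so by elliptic regularity (as already invoked in the proof of Proposition~\ref{prop:decomp}) it is genuinely holomorphic there and admits a Taylor expansion $f(z)=\sum_{n\ge 0}f_n z^n$ converging locally uniformly on $\Dm^\circ$.

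Next I would compute the relevant inner products directly in polar coordinates $z=re^{i\phi}$, using $d=1-r^2$ and the area element $r\,\d r\,\d\phi$. The angular integral $\int_0^{2\pi}e^{i(n-m)\phi}\,\d\phi$ forces orthogonality, and the substitution $s=r^2$ converts the radial integral into a Beta integral:
\begin{align}
(z^n, z^m)_{L^2_\gamma(\Dm)} = \delta_{nm}\,\pi\int_0^1 s^n(1-s)^\gamma\,\d s = \delta_{nm}\,\pi B(n{+}1,\gamma{+}1).
\end{align}

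The core of the argument is then a Parseval-plus-Tonelli computation. For each $r<1$, Parseval in the angular variable gives $\tfrac{1}{2\pi}\int_0^{2\pi}|f(re^{i\phi})|^2\,\d\phi=\sum_{n\ge 0}|f_n|^2 r^{2n}$; since every term is nonnegative I may integrate against $(1-r^2)^\gamma r\,\d r$ and exchange the sum with the integral by Tonelli's theorem, obtaining
\begin{align}
\|f\|_{L^2_\gamma(\Dm)}^2 = \pi\sum_{n=0}^\infty |f_n|^2 B(n{+}1,\gamma{+}1).
\end{align}
This identity settles the forward inclusion immediately. For the reverse inclusion, given coefficients with $\sum_n|f_n|^2 B(n{+}1,\gamma{+}1)<\infty$, I would invoke the asymptotic $B(n{+}1,\gamma{+}1)=\Gamma(\gamma{+}1)\,n!/\Gamma(n{+}\gamma{+}2)\sim\Gamma(\gamma{+}1)\,n^{-\gamma-1}$ to conclude that $|f_n|$ grows at most polynomially, so that $\sum_n f_n z^n$ has radius of convergence at least $1$ and defines a holomorphic function on $\Dm^\circ$; the same displayed identity then certifies that it lies in $L^2_\gamma(\Dm)$.

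Finally, the antianalytic case follows by complex conjugation: $f\in\ker\partial$ if and only if $\bar f\in\ker\dbar$, while $\|f\|_{L^2_\gamma(\Dm)}=\|\bar f\|_{L^2_\gamma(\Dm)}$ and the expansion $\sum_n f_n\zbar^n$ is the conjugate of $\sum_n\overline{f_n}\,z^n$. The steps requiring the most care are the justification of the interchange of summation and integration (which is why I would set it up so that nonnegativity makes Tonelli applicable) together with the radius-of-convergence estimate in the reverse direction, which is precisely where the decay rate $B(n{+}1,\gamma{+}1)\sim\Gamma(\gamma{+}1)\,n^{-\gamma-1}$ is needed.
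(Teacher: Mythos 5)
Your proof is correct and follows essentially the same route as the paper's: expand $f$ in a power series, reduce the weighted norm to Beta integrals in polar coordinates to get the identity $\|f\|_{L^2_\gamma(\Dm)}^2=\pi\sum_{n\ge 0}|f_n|^2B(n{+}1,\gamma{+}1)$, and handle the antianalytic case by conjugation. The only differences are cosmetic: you justify the sum--integral interchange via Parseval plus Tonelli where the paper uses uniform convergence on compact subsets plus monotone convergence, and you make explicit the radius-of-convergence argument needed for the reverse inclusion, a point the paper leaves implicit.
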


\begin{proof}
The case $\gamma = 0$ is classical \cite[Sec. 14.1]{Forster}. If $\dbar f = 0$ on $\Dm^\circ$, then $f$ can be expressed as a power series $\sum_{n=0}^\infty f_n z^n$ with radius of convergence at least $1$, converging uniformly on compact subsets of $\Dm^\circ$. If $f$ is further assumed $L^2_\gamma(\Dm)$-integrable, then the monotone convergence theorem applies, i.e.
\[ \|f\|^2_{L^2_\gamma(\Dm)} = \lim_{r\to 1} \int_{|z|\le r} |f|^2 d^\gamma |\d z|^2. \]
For fixed $r<1$, the power series converges uniformly on $\{|z|\le 1\}$, and by direct calculation  
\begin{align*}
    \int_{|z|<r} |f|^2 d^\gamma |\d z|^2 = \sum_{n=0}^\infty |f_n|^2 \int_{|z|\le r} |z|^{2n} d^\gamma(z) |dz|^2 &\stackrel{(z=\rho e^{i\omega})}{=} 2\pi \sum_{n=0}^\infty |f_n|^2 \int_0^r \rho^{2n} (1-\rho^2) \ \rho\d\rho \\
    &\ \ \stackrel{(s=\rho^2)}{=} \pi \sum_{n=0}^\infty |f_n|^2 \int_0^{r^2} s^{n} (1-s)^\gamma \ \d\rho,
\end{align*}
and the result follows upon sending $r\to 1$ and using that $\int_0^1 s^{n} (1-s)^\gamma \ \d\rho \stackrel{\eqref{eq:beta}}{=} B(n+1,\gamma+1)$.

The characterization of $L^2_\gamma(\Dm)\cap \ker \partial$ is then obtained by complex-conjugation. 
\end{proof}

Combining the previous with Lemma \ref{lem:ISVD} allows one to conclude the following isomorphisms.
\begin{lemma}\label{lem:isomIkerk}
    Let $p\ge 0$. The restrictions
    \begin{align}
        I d^\gamma &\colon (H_{\gamma,2p} \cap \ker \eta_-) \to h^{(1+\gamma)/2} ( \psi_{n,-p}^{\gamma,+}, n\ge 0), \quad\text{and}\\
        I d^\gamma &\colon (H_{\gamma,2p+1} \cap \ker \eta_-) \to h^{(1+\gamma)/2} ( \psi_{n,-p}^{\gamma,-}, n\ge 0),
    \end{align}
    are isomorphisms. 
\end{lemma}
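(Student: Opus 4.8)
The plan is to reduce the statement to a diagonal ``singular value'' analysis with respect to explicit orthonormal families in the source and target, and then to read off the isomorphism property from two-sided bounds on these singular values. I treat the even case $I d^\gamma\colon H_{\gamma,2p}\cap\ker\eta_- \to h^{(1+\gamma)/2}(\psi_{n,-p}^{\gamma,+}, n\ge 0)$ in detail, the odd case being verbatim after replacing $\psi^{\gamma,+}$ by $\psi^{\gamma,-}$ and using the odd formula of Lemma \ref{lem:ISVD}.

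First I would pin down an orthonormal basis of the source. By the correspondence \eqref{eq:ellm_iso3}, the space $H_{\gamma,2p}\cap\ker\eta_-$ is the image under $\ell_{2p}$ of $\{f\,\d z^{2p}\colon f\in L^2_\gamma(\Dm)\cap\ker\dbar\}$, i.e.\ it consists of the functions $f\,e^{2pi\theta}$ with $f$ holomorphic and $L^2_\gamma$-integrable. By Lemma \ref{lem:charholomorphic} such $f$ are exactly the $L^2_\gamma$-convergent series in the monomials $z^n$; distinct monomials are $L^2_\gamma(\Dm)$-orthogonal (the angular integral annihilates cross terms), and since $z^n$ is the extremal-frequency, hence pure-monomial, representative of the Zernike family one has $\wtZ_{n,0}^\gamma = z^n/\|z^n\|_{L^2_\gamma(\Dm)}$ up to a unimodular constant. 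Consequently $\{(2\pi)^{-1/2}\wtZ_{n,0}^\gamma e^{2pi\theta}\}_{n\ge 0}$ is a complete orthonormal system of $H_{\gamma,2p}\cap\ker\eta_-$ inside $L^2_\gamma(\SD)$.

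Next I would compute the action of $I d^\gamma$ on this basis. Since $\wtZ_{n,0}^\gamma e^{2pi\theta}=\ell_{2p}(\wtZ_{n,0}^\gamma \d z^{2p})$, Lemma \ref{lem:ISVD} (even case, $k=0$) gives
\[
    I d^\gamma\big((2\pi)^{-1/2}\wtZ_{n,0}^\gamma e^{2pi\theta}\big)=(2\pi)^{-1/2}\,\sigma_{n,0}^\gamma\,\psi_{n,-p}^{\gamma,+}, \qquad n\ge 0,
\]
and $\{\psi_{n,-p}^{\gamma,+}\}_{n\ge 0}$ is an orthonormal family in $L^2_{\gamma,+}(\inward)$. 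Thus $I d^\gamma$ is diagonal relative to these two orthonormal families, with singular values $(2\pi)^{-1/2}\sigma_{n,0}^\gamma$. The crux is then the asymptotics of $\sigma_{n,0}^\gamma$: specializing \eqref{eq:I0range} to $k=0$ gives $(\sigma_{n,0}^\gamma)^2=2^{2\gamma+2}\pi\,\Gamma(\gamma+1)\,\Gamma(n+\gamma+1)/\Gamma(n+2\gamma+2)$, and the standard Gamma-ratio estimate $\Gamma(n+\gamma+1)/\Gamma(n+2\gamma+2)\sim n^{-(\gamma+1)}$ yields two-sided bounds $c_1(n+1)^{-(1+\gamma)/2}\le \sigma_{n,0}^\gamma\le c_2(n+1)^{-(1+\gamma)/2}$ for some $c_1,c_2>0$ (using $\sigma_{n,0}^\gamma>0$ for all $n$ to absorb small $n$).

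Finally I would assemble the isomorphism. Writing a source element as $\sum_n \beta_n (2\pi)^{-1/2}\wtZ_{n,0}^\gamma e^{2pi\theta}$, its image has $\psi_{n,-p}^{\gamma,+}$-coefficients $a_n=(2\pi)^{-1/2}\sigma_{n,0}^\gamma\beta_n$, so the target Hilbert-scale norm $\sum_n (n+1)^{1+\gamma}|a_n|^2=(2\pi)^{-1}\sum_n (n+1)^{1+\gamma}(\sigma_{n,0}^\gamma)^2|\beta_n|^2$ is, by the two-sided bounds, comparable to $\sum_n |\beta_n|^2=\|\cdot\|_{L^2_\gamma(\SD)}^2$. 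This shows at once that $I d^\gamma$ is bounded into $h^{(1+\gamma)/2}(\psi_{n,-p}^{\gamma,+})$, injective, and bounded below; surjectivity follows because for any $\sum_n a_n\psi_{n,-p}^{\gamma,+}\in h^{(1+\gamma)/2}$ the candidate preimage $\sum_n (2\pi)^{1/2}(\sigma_{n,0}^\gamma)^{-1}a_n\,(2\pi)^{-1/2}\wtZ_{n,0}^\gamma e^{2pi\theta}$ lies in $L^2_\gamma(\SD)$ by the same comparison. The odd case is identical, using the odd formula of Lemma \ref{lem:ISVD} and the family $\{\psi_{n,-p}^{\gamma,-}\}$. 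The main obstacle is the uniform two-sided control of $\sigma_{n,0}^\gamma$; once the Gamma-ratio asymptotics are secured, the exponent $(1+\gamma)/2$ is seen to match exactly the decay rate $\sigma_{n,0}^\gamma\sim(n+1)^{-(1+\gamma)/2}$, which is precisely what makes the target $h^{(1+\gamma)/2}$ rather than plain $\ell^2$.
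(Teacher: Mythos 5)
Your proposal is correct and follows essentially the same route as the paper: expand elements of $H_{\gamma,2p}\cap\ker\eta_-$ in the orthonormal family $\{\wtZ_{n,0}^\gamma e^{2pi\theta}\}$, use the shifting property (via Lemma \ref{lem:ISVD}, itself a consequence of Lemma \ref{lem:xrayshift} and the SVD \eqref{eq:SVDI0}) to diagonalize $Id^\gamma$ with singular values $\sigma_{n,0}^\gamma$, and invoke the asymptotic $(\sigma_{n,0}^\gamma)^2\sim n^{-\gamma-1}$ to match the exponent $(1+\gamma)/2$ of the target Hilbert scale. Your write-up is somewhat more explicit than the paper's (two-sided singular-value bounds, normalization constant $(2\pi)^{-1/2}$, and an explicit surjectivity check, where the paper only records the reconstruction formula \eqref{eq:frecons}), but the underlying argument is the same.
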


\begin{proof}
    An element $f\in H_{\gamma,2p} \cap \ker\eta_-$ can be written as $f = e^{2ip\theta} \sum_{n=0}^\infty f_n \wtZ_{n,0}^\gamma$ with $\sum_{n\ge 0} |f_n|^2<\infty$. Then by Lemma \ref{lem:xrayshift},
    \begin{align}
        I d^\gamma f = e^{2ip(\beta+\alpha+\pi)} \sum_{n=0}^\infty f_n I_0 d^\gamma \wtZ_{n,0}^\gamma= \sum_{n=0}^\infty \sigma_{n,0}^\gamma f_n\psi_{n,-p}^{\gamma,+}.
    \end{align}
    To verify the right-hand side lies in $h^{(1+\gamma)/2} ( \psi_{n,-p}^{\gamma,+}, n\ge 0)$, make the estimate
    \begin{align}
    \sum_{n=0}^\infty |\sigma_{n,0}^\gamma f_n|^2 (n+1)^{\gamma+1}
    \sim \sum_{n=0}^\infty |f_n|^2 < \infty
    \end{align}
    by applying the asymptotic $(\sigma_{n,0}^\gamma)^2 \sim n^{-\gamma-1}$ as $n\to\infty$ (see, e.g., \cite[Sec. 5.2]{Mishra2022}). Note in addition that $f$ can be recovered by the formula 
    \begin{align}
        f = e^{2ip\theta}\sum_{n=0}^\infty \frac{(I d^\gamma f, \psi_{n,-p}^{\gamma,+})}{\sigma_{n,0}^\gamma} \wtZ_{n,0}^\gamma.
        \label{eq:frecons}
    \end{align}
    
    The odd case is similar, using that $e^{(2p+1)i(\beta+\alpha+\pi)} \psi_{n,0}^\gamma = \psi_{n,-p}^{\gamma,-}$. 
\end{proof}

\begin{proof}[Proof of Lemma \ref{lem:homeomorphisms}]
Recall that for $k\ge 1$, Equation \eqref{eq:ellm_iso3} reads
\begin{align*}
    L^2_\gamma (\Dm, \Stt^k (T^*\Dm)) \cong (H_{\gamma,k}\cap \ker \eta_-) \oplus (H_{\gamma,-k}\cap \ker\eta_+) = (H_{\gamma,k}\cap \ker \eta_-) \oplus \overline{(H_{\gamma,k}\cap \ker \eta_-)}.
\end{align*}
Hence Lemma \ref{lem:homeomorphisms} follows by linearity, a direct application of Lemma \ref{lem:isomIkerk}, and the fact that for all $n\ge 0$ and $k\in \Zm$, $\overline{\psi_{n,k}^{\gamma,+}} = \psi_{n,n-k}^{\gamma,+}$ and $\overline{\psi_{n,k}^{\gamma,-}} = \psi_{n,n-k+1}^{\gamma,-}$. For $k=2p$ even, this implies 
\begin{align*}
    L^2_\gamma (\Dm, \Stt^k (T^*\Dm)) \subset h^{(1+\gamma)/2} ( \psi_{n,-p}^{\gamma,+}, n\ge 0) + h^{(1+\gamma)/2} ( \psi_{n,n+p}^{\gamma,+}, n\ge 0).
\end{align*}
The fact that the sum is orthogonal on the right side follows from the fact that the index spans are disjoint. When $k$ is odd, a similar argument applies.
\end{proof}

\subsection{Decomposition of the range of $I_1 d^\gamma$}

The first integrals decomposed in the next lemma are used to prove Corollary \ref{cor:decomposition} and in the proof of Theorem \ref{thm:rangedecomp}.
\begin{lemma}\label{lem:psinkpreimage}
For $\gamma >-1$, $n\ge 0$ and $k\in \Zm$, one has the sums
\begin{align}
    (\mu^{-2\gamma-1}\psi_{n,k}^{\gamma,+})\circ\pif(z,\theta) &= \sum_{j=0}^n e^{2(j-k)i\theta}Z_{n,j}^\gamma(z), \qquad \text{and}
    \label{eq:psinkpreimage}\\
    (\mu^{-2\gamma-1}\psi_{n,k}^{\gamma,-})\circ\pif(z,\theta) &= \sum_{j=0}^n e^{(2j-2k+1)i\theta}Z_{n,j}^\gamma(z), \qquad \text{for $(z,\theta)$} \in S\Dm.
    \label{eq:psinkpreimageodd}
\end{align}
Moreover, if $\hat L_n^\gamma(x) = \hat \ell_n^\gamma x^n + \cdots$ and $Z_{n,k}^\gamma = \hat g_{n,k}^\gamma z^{n-k}\zbar^k + \cdots$, then $\hat g_{n,k}^\gamma = \hat \ell_n^\gamma\binom{n}{k}(-1)^k/(2i)^n$.
\end{lemma}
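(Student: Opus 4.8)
The plan is to prove Lemma \ref{lem:psinkpreimage} in two stages: first the backprojection identities \eqref{eq:psinkpreimage}--\eqref{eq:psinkpreimageodd}, and then the leading-coefficient formula for $Z_{n,k}^\gamma$. For the first identity, I would start from the fiberwise Fourier decomposition of the function $(\mu^{-2\gamma-1}\psi_{n,k}^{\gamma,+})\circ\pif$ on $S\Dm$. Since this function is a pullback under the fan-beam projection $\pif$, and $\mu^{-2\gamma-1}\psi_{n,k}^{\gamma,+}$ contains the phase $e^{(n-2k)i(\beta+\alpha+\pi)}$, I would use the geodesic-invariance relation $\theta = (\beta+\alpha+\pi)\circ\pif$ (established in Lemma \ref{lem:xrayshift}) to pull out the angular dependence as $e^{(n-2k)i\theta}$ times a factor depending only on the geodesic. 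The $k$-th fiberwise Fourier coefficient of this pullback is, by definition \eqref{eq:Znk}, essentially $Z_{n,k}^\gamma$; the summation over $j$ then arises because $\hat L_n^\gamma(\sin\alpha)$, when expressed via $\sin\alpha$ and the invariant $\cos\alpha$, expands into a finite sum of distinct angular harmonics $e^{2(j-k)i\theta}$ as $j$ ranges over $0,\dots,n$, each contributing the corresponding Zernike polynomial $Z_{n,j}^\gamma$.

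More concretely, I would compute the fiberwise Fourier coefficients of the left-hand side directly: the $e^{2\ell i\theta}$-coefficient of $(\mu^{-2\gamma-1}\psi_{n,k}^{\gamma,+})\circ\pif$ is, by the very definition \eqref{eq:Znk} applied after shifting the index, equal to $Z_{n,\ell+k}^\gamma$ (appropriately interpreted), and showing that only indices $0\le j\le n$ survive follows from the degree-$n$ structure of $\hat L_n^\gamma$. This is the conceptual heart of the argument and where I expect the main obstacle: carefully tracking how the single harmonic $e^{(n-2k)i(\beta+\alpha+\pi)}$ together with the polynomial $\hat L_n^\gamma(\sin\alpha)$ decomposes into the full band of fiberwise harmonics $e^{2(j-k)i\theta}$, $0\le j\le n$, and matching each coefficient to the correct $Z_{n,j}^\gamma$ rather than to some other multiple. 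The odd case \eqref{eq:psinkpreimageodd} then follows by the same computation applied to $\psi_{n,k}^{\gamma,-} = e^{i(\beta+\alpha+\pi)}\psi_{n,k}^\gamma$, which shifts all harmonics by one, turning $e^{2(j-k)i\theta}$ into $e^{(2j-2k+1)i\theta}$.

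For the leading-coefficient claim, I would extract the top-degree term. Writing $\hat L_n^\gamma(x)=\hat\ell_n^\gamma x^n+\cdots$ and using $\sin\alpha$ expressed through the flow, the highest-degree monomial $z^{n-k}\zbar^k$ of $Z_{n,k}^\gamma$ comes from the leading term of $\hat L_n^\gamma$ evaluated in the backprojection integral \eqref{eq:Znk}. The plan is to substitute $\sin\alpha = \frac{1}{2i}(e^{i\alpha}-e^{-i\alpha})$ (or the analogous expression in terms of the geodesic parametrization), raise to the $n$-th power via the binomial theorem to obtain the $\binom{n}{k}(-1)^k/(2i)^n$ factor, and identify the coefficient of $z^{n-k}\zbar^k$. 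I expect this step to be a routine but bookkeeping-heavy calculation; the factor $(2i)^{-n}$ and the sign $(-1)^k$ should emerge directly from the binomial expansion of the imaginary-exponential form of $\sin\alpha$, and the $\binom{n}{k}$ from selecting the appropriate term. The main care needed is consistency with the normalization of $\hat L_n^\gamma$ fixed in Appendix \ref{sec:normalization} and with the holomorphic-coordinate conventions for $z,\zbar$.
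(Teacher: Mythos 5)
Your proposal follows essentially the same route as the paper's proof: factor out the phase using $\theta = (\beta+\alpha+\pi)\circ\pif$, note that the $k$-dependence of the pullback sits entirely in that phase so that index-shifting together with the definition \eqref{eq:Znk} identifies each fiberwise Fourier mode with a $Z_{n,j}^\gamma$, bound the Fourier support by the degree-$n$/parity structure of $\hat L_n^\gamma$, and extract the leading coefficient via the binomial theorem, with the odd case following from the extra factor $e^{i(\beta+\alpha+\pi)}$. The only ingredient you leave imprecise is the substitution you actually need: not Euler's formula $\sin\alpha = (e^{i\alpha}-e^{-i\alpha})/(2i)$, but the impact-parameter identity $\sin\alpha_-\circ\pif(z,\theta) = (z e^{-i\theta} - \zbar e^{i\theta})/(2i)$, which is what makes both the expansion into the harmonics $e^{2(j-k)i\theta}$ and the identification of the $z^{n-k}\zbar^k$ coefficient concrete.
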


\begin{proof}
Start with expression \eqref{def:psink} of the fan-beam polynomials
\begin{align}
\psi_{n,k}^\gamma(\beta,\alpha) \coloneq \mu^{2\gamma+1}
	e^{i(n-2k)(\beta+\alpha+\pi)}\hat L_n^\gamma(\sin\alpha)/2\pi.
\end{align}
Denoting $\pi_F(z,\theta) = (\beta_-,\alpha_-)\in \partial_+ S\Dm$, apply the relations
\[ \beta_- + \alpha_- + \pi = \theta \quad \text{and} \quad \sin\alpha_-(z,\theta) = \frac{z e^{-i\theta} - \zbar e^{i\theta}}{2i},  \]
to obtain
\begin{align}
(\mu^{-2\gamma-1}\psi_{n,k}^\gamma)\circ\pif(\rho e^{i\omega},\theta)
= e^{(n-2k)i\theta} \hat L_n^\gamma ((z e^{-i\theta} - \zbar e^{i\theta})/(2i))/2\pi.
\end{align}
Hence \eqref{eq:psinkpreimage} will be proved if we can show that 
\begin{align}
    \hat L_n^\gamma(\sin\alpha_-(z,\theta)) = \sum_{j=0}^n e^{-(n-2j)i\theta}Z_{n,j}^\gamma(z), 
    \label{eq:jacobisinalpha-}
\end{align}
which we now prove. That the Fourier support of $\hat L_n^\gamma(\sin\alpha_-(z,\theta))$ is no more than $\{ e^{i(n-2j)\theta},\ 0\le j\le n\}$ is a direct consequence of the fact that $\hat L_n^\gamma(\sin\alpha_-(z,\theta))=\hat L_n^\gamma((ze^{-i\theta}{-}\zbar e^{i\theta})/(2i))$ is a polynomial of degree $n$ with the same parity as $n$ of a homogeneous polynomial of degree $1$ in $e^{\pm i\theta}$. The expression of the nonzero Fourier modes then follows directly from Equation \eqref{eq:Znk}, namely that, for $n\ge 0$ and $0\le j\le n$,
\begin{align}
Z_{n,j}^\gamma(z) = \frac{1}{2\pi}\int_{\mathbb{S}^1}\mu^{-2\gamma-1}\psi_{n,j}^\gamma \circ\pi_F(z,\theta)\d \theta = \frac{1}{2\pi}\int_{\mathbb{S}^1}
    e^{(n-2j)i\theta}\hat L_n^\gamma(\sin\alpha_-(z,\theta))\d\theta.
\end{align}

Finally, to deduce a relation between the constants $\hat \ell_n^\gamma$ and $\hat g_n^\gamma$, decompose the polynomials as in the statement. Apply the binomial theorem to obtain
\begin{align}
\hat L_n^\gamma(\sin\alpha_-)
&= \hat\ell_n^\gamma \left(\frac{ze^{-i\theta}-\zbar e^{i\theta}}{2i}\right)^n+\cdots \\
&= \frac{\hat\ell_n^\gamma}{(2i)^n} \sum_{j=0}^n \binom{n}{j}\zbar^j e^{ji\theta} z^{n-j}e^{-(n-j)i\theta}(-1)^j+\cdots\\
&= \frac{\hat\ell_n^\gamma}{(2i)^n} \sum_{j=0}^n \binom{n}{j}\zbar^j z^{n-j}e^{-(n-2j)i\theta}(-1)^j+\cdots.
\end{align}
Identify the previous coefficients with coefficients in Equation \eqref{eq:jacobisinalpha-} to obtain the result.
\end{proof}

\begin{lemma}\label{lem:rangeoneforms}
The range of the operator $I_1d^\gamma \colon L^2_{\gamma}(\Dm; S^1(T^*\Dm))
\to L^2_{\gamma,-}(\inward)$ decomposes orthogonally
\begin{align}
I_1d^\gamma(L^2_\gamma(\Dm; S^1(T^*\Dm))) = I_1({\star}\d(H^{1,-\gamma}_0(\Dm))) \stackrel{\perp}{\oplus} I_1 d^\gamma(L^2_\gamma(\Dm, \Stt^1(T^*\Dm))).
\label{eq:orthodecomp}
\end{align}
\end{lemma}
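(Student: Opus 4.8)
The plan is to feed the first decomposition of Lemma~\ref{lem:oneformdecompSD} into $I_1 d^\gamma$, read off the two summands, and then establish orthogonality by comparing the fan-beam supports of the two ranges in the orthonormal basis $\{\psi_{n,k}^{\gamma,-}\}$ of $L^2_{\gamma,-}(\inward)$.

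First I would take an arbitrary $w\in L^2_\gamma(\Dm; S^1(T^*\Dm))$ and write it, via \eqref{eq:oneform1}, as $w = d^{-\gamma}\d g_0 + d^{-\gamma}{\star\d} g_s + \tilde{g}_1$ with $g_0,g_s\in H^{1,-\gamma}_0(\Dm)$ and $\tilde{g}_1\in L^2_\gamma(\Dm, \Stt^1(T^*\Dm))$. Applying $I_1 d^\gamma$ and using linearity produces three terms. The first vanishes: since $\d g_0 = \d^s g_0$ for the function $g_0$, Lemma~\ref{lem:kernel} gives $I_1 d^\gamma(d^{-\gamma}\d g_0)=0$. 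In the second term the weights cancel inside the transport integral, because $\ell_1(d^{-\gamma}{\star\d} g_s) = d^{-\gamma}\ell_1(\star\d g_s)$ and the explicit $d^\gamma$ factor in the definition of $I_1 d^\gamma$ multiplies it back to the unweighted integrand; hence $I_1 d^\gamma(d^{-\gamma}{\star\d} g_s) = I_1(\star\d g_s)\in I_1(\star\d(H^{1,-\gamma}_0(\Dm)))$. The third term is $I_1 d^\gamma\tilde{g}_1\in I_1 d^\gamma(L^2_\gamma(\Dm, \Stt^1(T^*\Dm)))$. This shows $I_1 d^\gamma w$ lies in the algebraic sum of the two subspaces; the reverse inclusion is immediate, since each summand is the image under $I_1 d^\gamma$ of a subspace of $L^2_\gamma(\Dm; S^1(T^*\Dm))$, using once more the weight cancellation $I_1(\star\d h)=I_1 d^\gamma(d^{-\gamma}{\star\d} h)$.

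It remains to prove the sum is orthogonal. For this I would invoke the two available descriptions of the summands in the common orthonormal basis $\{\psi_{n,k}^{\gamma,-}\}_{n\ge 0,\,k\in\Zm}$. By \eqref{eq:I1span}, the closure of $I_1(\star\d(H^{1,-\gamma}_0(\Dm)))$ is the span of $\{\psi_{n,k}^{\gamma,-} : n\ge 1,\ 1\le k\le n\}$. By Lemma~\ref{lem:homeomorphisms} applied with $p=0$ (see \eqref{eq:homeos}), the range $I_1 d^\gamma(L^2_\gamma(\Dm, \Stt^1(T^*\Dm)))$ is contained in the span of $\{\psi_{n,0}^{\gamma,-} : n\ge 0\}\cup\{\psi_{n,n+1}^{\gamma,-} : n\ge 0\}$. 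These two index families are disjoint, since $k=0$ and $k=n+1$ never satisfy $1\le k\le n$. Orthonormality of $\{\psi_{n,k}^{\gamma,-}\}$ then forces the two subspaces to be orthogonal, which in particular makes their intersection trivial and hence the sum a genuine direct sum.

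I expect that the one point deserving care, rather than a genuine obstacle, is the bookkeeping of the two range descriptions: \eqref{eq:I1span} and Lemma~\ref{lem:homeomorphisms} are proved separately and must be matched up in the same $\psi^{\gamma,-}_{n,k}$-indexed basis before the disjoint-support observation can be applied. A secondary point is to record the weight cancellation $I_1 d^\gamma(d^{-\gamma}\,\cdot\,)=I_1(\cdot)$ explicitly, so that the identification of the first summand with $I_1(\star\d(H^{1,-\gamma}_0(\Dm)))$ is unambiguous; once both are in place, orthogonality is immediate.
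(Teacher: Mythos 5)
Your first paragraph (splitting $w$ via \eqref{eq:oneform1}, killing the potential term, and identifying the two summands) is correct and coincides with the paper's opening step. The problem is the orthogonality argument, which is circular. You cite \eqref{eq:I1span} to locate $I_1({\star}\d(H^{1,-\gamma}_0(\Dm)))$ inside $\operatorname{span}\{\psi_{n,k}^{\gamma,-}:n\ge 1,\,1\le k\le n\}$, but in the paper \eqref{eq:I1span} is part of Theorem \ref{thm:rangedecomp}, whose proof obtains it by taking the $L^2_\gamma$-closure of the characterization \eqref{eq:rangeIperp}; and \eqref{eq:rangeIperp} is itself proved as a corollary of Lemma \ref{lem:rangeoneforms}, precisely via the identity $I_1({\star}\d(H^{1,-\gamma}_0(\Dm))) = I_1d^\gamma(L^2_\gamma(\Dm;S^1(T^*\Dm))) \cap \bigl(I_1 d^\gamma(L^2_\gamma(\Dm,\Stt^1(T^*\Dm)))\bigr)^\perp$. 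So the statement you invoke is downstream of the lemma you are proving; the disjoint-index observation is the easy part, and the hard content --- that the $I_1{\star}\d$ range has no component along $\psi_{n,0}^{\gamma,-}$ or $\psi_{n,n+1}^{\gamma,-}$ --- is exactly what you have assumed. (Your use of Lemma \ref{lem:homeomorphisms} for the tt summand is legitimate, since that lemma is established independently and earlier.)

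The paper proves the orthogonality directly: setting $v := (\mu^{-2\gamma-1}\, I_1 d^\gamma \tilde{g}_1)\circ\pif$, Santal\'o's formula \eqref{eq:Santalo} converts the data-space pairing of $I_1({\star}\d g_s)$ with $I_1 d^\gamma\tilde g_1$ into $\int_\SD X_\perp g_s\, \overline{v}\ \d\Sigma^3$; fiberwise Fourier analysis reduces this to the pairings $(d^{-\gamma}\partial g_s, v_1)_{L^2_\gamma(\Dm)}$ and $(d^{-\gamma}\dbar g_s, v_{-1})_{L^2_\gamma(\Dm)}$; and Lemmas \ref{lem:homeomorphisms} and \ref{lem:psinkpreimage} identify $v_{1}$ and $v_{-1}$ as $L^2_\gamma$-convergent series in $Z_{n,0}^\gamma$ and $Z_{n,n}^\gamma$ (holomorphic, resp.\ antiholomorphic), so both pairings vanish by the elliptic orthogonality \eqref{eq:orthogonality} of Proposition \ref{prop:decomp}. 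If you want to keep your basis-bookkeeping style, the repair is to replace the citation of \eqref{eq:I1span} by a direct verification that $(I_1({\star}\d h), \psi_{n,0}^{\gamma,-}) = (I_1({\star}\d h), \psi_{n,n+1}^{\gamma,-}) = 0$ for all $n\ge 0$: writing ${\star}\d h = -i\,\partial h\,\d z + i\,\dbar h\,\d\zbar$ and using the shifting Lemma \ref{lem:xrayshift} together with the SVD \eqref{eq:SVDI0}, these pairings reduce (up to nonzero constants) to $(d^{-\gamma}\partial h, \wtZ_{n,0}^\gamma)_{L^2_\gamma(\Dm)}$ and $(d^{-\gamma}\dbar h, \wtZ_{n,n}^\gamma)_{L^2_\gamma(\Dm)}$, which vanish by \eqref{eq:orthogonality} since $\wtZ_{n,0}^\gamma$ is holomorphic and $\wtZ_{n,n}^\gamma$ is antiholomorphic. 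That integration-by-parts input from Proposition \ref{prop:decomp} is what your proposal is missing; without it, no version of the argument closes.
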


\begin{proof}
Let $u=I_1(d^\gamma w)$ for some $w\in L^2_\gamma(\Dm; S^1(T^* \Dm))$.
By Lemma \ref{lem:oneformdecompSD}, decompose
\begin{align}
w = d^{-\gamma} \d^s g_0 + d^{-\gamma} {\star}\d g_s + \tilde{g}_1,
\end{align}
where $g_0,g_s\in H^{1,-\gamma}_0(\Dm)$ and $\tilde g_{1}\in L^2_\gamma(\Dm, \Stt^1(T^*\Dm))$.
Then
\begin{align} 
    I_1(d^\gamma w) = \cancel{I_1( \d g_0)} + I_1(\star\d g_s) + I_1 d^\gamma \tilde{g}_1,
\end{align}
where $I_1(\d g_0) = 0$ since $g_0$ vanishes on the boundary. It remains to argue that $I_1(\star\d g_s)$ and $I_1 d^\gamma \tilde{g}_1$ are orthogonal. Let us denote $v :=(\mu^{-2\gamma-1}\ I_1d^\gamma \tilde{g}_1)\circ\pif$ for short, and notice, using Santal\'o's formula that 
\begin{align}
    \int_\inward  I(X_\perp g_s) \overline{I(d^\gamma g_1)}	\mu^{-2\gamma} \d\Sigma^2 = \int_\inward  I(X_\perp g_s \overline{v}) \mu \d\Sigma^2 \stackrel{\eqref{eq:Santalo}}{=} \int_\SD X_\perp g_s \overline{v}\ \d\Sigma^3
\end{align}
and using fiberwise Fourier decomposition, the last term equals
\begin{align}
    \int_\SD X_\perp g_s \overline{v}\ \d\Sigma^3 = \frac{1}{i} \left( (d^{-\gamma}\partial g_s, v_1)_{L^2_\gamma(\Dm)} - (d^{-\gamma}\dbar g_s, v_{-1})_{L^2_\gamma(\Dm)} \right).
\end{align}
In light of \eqref{eq:orthogonality}, the last right-hand side will be zero if we can show that $v_1$ and $\overline{v_{-1}}$ belong to $L^2_\gamma(\Dm)\cap\ker\partial$ and $L^2_\gamma(\Dm) \cap \ker\dbar$, respectively. To see this, recall from Lemma \ref{lem:homeomorphisms} that 
\begin{align}
    I_1 d^\gamma \tilde{g}_1 = \sum_{n=0}^\infty (a_{1,n} \psi_{n,0}^{\gamma,-} + a_{-1,n} \psi_{n,n+1}^{\gamma,-}),  \qquad \sum_{n=0}^\infty (n+1)^{\gamma+1}|a_{\pm 1,n}|^2 <\infty. \label{eq:g1image}
\end{align}
Then, 
\begin{align}
    v_{\pm 1} = \sum_{n=0}^\infty \left(a_{1,n}  (\mu^{-2\gamma-1}\psi_{n,0}^{\gamma,-}\circ \pi_F)_{\pm 1} + a_{-1,n} (\mu^{-2\gamma-1}\psi_{n,n+1}^{\gamma,-}\circ \pi_F)_{\pm 1}\right).
\end{align}
Lemma \ref{lem:psinkpreimage} is applied so that
\begin{align*}
    v_1 = \sum_{n=0}^\infty a_{1,n} Z_{n,0}^\gamma \quad\text{and}\quad v_{-1} = \sum_{n=0}^\infty a_{-1,n} Z_{n,n}^\gamma.
\end{align*}
so that 
\begin{align*}
    \|v_1\|_{L^2_\gamma}^2 &= \sum_{n=0}^\infty |a_{1,n}|^2 \|Z_{n,0}^\gamma\|^2 \stackrel{\eqref{eq:normZnk}}{=} \sum_{n=0}^\infty |a_{1,n}|^2 (\sigma_{n,0}^\gamma)^2, \\
    \|v_{-1}\|_{L^2_\gamma}^2 &= \sum_{n=0}^\infty |a_{-1,n}|^2 \|Z_{n,n}^\gamma\|^2 \stackrel{\eqref{eq:normZnk}}{=} \sum_{n=0}^\infty |a_{-1,n}|^2 (\sigma_{n,n}^\gamma)^2,
\end{align*}
where $(\sigma_{n,0}^\gamma)^2 = (\sigma_{n,n}^\gamma)^2\sim n^{-\gamma-1}$. Combining this with estimate \eqref{eq:g1image}, we find that $v_{\pm 1}\in L^2_\gamma(\Dm)$, hence \eqref{eq:orthodecomp} is proved.
\end{proof}

\begin{corollary}
    We have the range characterization
    \begin{align}
        I_1({\star}\d(H^{1,-\gamma}_0(\Dm))) = \left\{ \sum_{n=1}^\infty \sum_{k=1}^{n} c_{n,k} \psi_{n,k}^{\gamma,-},\quad \sum_{n\ge 1} \sum_{k=1}^{n} \frac{|c_{n,k}|^2}{(\sigma_{n,k}^\gamma)^2} <\infty   \right\}.        
    \label{eq:rangeIperp}    
\end{align}
\end{corollary}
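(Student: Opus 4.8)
The plan is to prove the two inclusions separately, using the explicit action of $I_1 d^\gamma$ on the Zernike basis (Lemma~\ref{lem:ISVD}) together with the orthogonal splitting of Lemma~\ref{lem:rangeoneforms}. First I would set up the forward inclusion. For $h\in H^{1,-\gamma}_0(\Dm)$ one has $\partial h,\dbar h\in L^2_{-\gamma}(\Dm)$, so that $d^{-\gamma}\star\d h\in L^2_\gamma(\Dm; S^1(T^*\Dm))$ and $I_1(\star\d h)=I_1 d^\gamma(d^{-\gamma}\star\d h)$. Using $\star\d z=-i\,\d z$ and $\star\d\zbar=i\,\d\zbar$ (immediate from \eqref{eq:Hodge}), write $d^{-\gamma}\star\d h=-i\,(d^{-\gamma}\partial h)\,\d z+i\,(d^{-\gamma}\dbar h)\,\d\zbar$, and expand the two $L^2_\gamma(\Dm)$ functions in the orthonormal Zernike basis as $d^{-\gamma}\partial h=\sum_{n,k}a_{n,k}\wtZ_{n,k}^\gamma$ and $d^{-\gamma}\dbar h=\sum_{n,k}b_{n,k}\wtZ_{n,k}^\gamma$, with $\sum_{n,k}(|a_{n,k}|^2+|b_{n,k}|^2)<\infty$. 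Applying the $p=0$ case of Lemma~\ref{lem:ISVD} term by term (legitimate since $I_1 d^\gamma$ is bounded, Corollary~\ref{cor:Imbounded}) yields $u:=I_1(\star\d h)=\sum_{n\ge 0}\sum_{0\le j\le n+1}c_{n,j}\psi_{n,j}^{\gamma,-}$ with $c_{n,j}=-i\,\sigma_{n,j}^\gamma a_{n,j}+i\,\sigma_{n,j-1}^\gamma b_{n,j-1}$, the boundary terms $a_{n,n+1}$ and $b_{n,-1}$ being absent.

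Next I would pin down the support and the decay. Since $u\in I_1(\star\d(H^{1,-\gamma}_0(\Dm)))$, Lemma~\ref{lem:rangeoneforms} shows $u$ is orthogonal to $I_1 d^\gamma(L^2_\gamma(\Dm,\Stt^1(T^*\Dm)))$, which by Lemma~\ref{lem:homeomorphisms} equals $h^{(1+\gamma)/2}(\psi_{n,0}^{\gamma,-},\,n\ge 0)\oplus h^{(1+\gamma)/2}(\psi_{n,n+1}^{\gamma,-},\,n\ge 0)$ and in particular contains each $\psi_{n,0}^{\gamma,-}$ and $\psi_{n,n+1}^{\gamma,-}$. Hence $c_{n,0}=c_{n,n+1}=0$ for every $n$, leaving only the interior indices $1\le j\le n$. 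For the summability I would bound $|c_{n,j}|/\sigma_{n,j}^\gamma\le |a_{n,j}|+(\sigma_{n,j-1}^\gamma/\sigma_{n,j}^\gamma)\,|b_{n,j-1}|$ and control the ratio of consecutive singular values: from \eqref{eq:I0range} a direct computation gives $(\sigma_{n,j-1}^\gamma/\sigma_{n,j}^\gamma)^2=\tfrac{j\,(n-j+1+\gamma)}{(n-j+1)\,(j+\gamma)}$, which for $\gamma\in(-1,1)$ is bounded above by a constant $C_\gamma$ uniformly over $1\le j\le n$. Summing then gives $\sum_{n\ge 1}\sum_{k=1}^n |c_{n,k}|^2/(\sigma_{n,k}^\gamma)^2\lesssim \sum_{n,k}(|a_{n,k}|^2+|b_{n,k}|^2)<\infty$, which is the inclusion $\subseteq$.

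For the reverse inclusion the argument is shorter. Given $u=\sum_{n\ge 1}\sum_{k=1}^n c_{n,k}\psi_{n,k}^{\gamma,-}$ with $\sum |c_{n,k}|^2/(\sigma_{n,k}^\gamma)^2<\infty$, I would simply set $w:=\sum_{n\ge 1}\sum_{k=1}^n (c_{n,k}/\sigma_{n,k}^\gamma)\,\wtZ_{n,k}^\gamma\,\d z$. The decay hypothesis makes $w\in L^2_\gamma(\Dm; S^1(T^*\Dm))$, and Lemma~\ref{lem:ISVD} gives $I_1 d^\gamma w=u$, so $u\in\operatorname{Ran}(I_1 d^\gamma)$. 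Because $u$ is supported on the interior diagonals $1\le k\le n$, it is orthogonal to the tt-range $h^{(1+\gamma)/2}(\psi_{n,0}^{\gamma,-})\oplus h^{(1+\gamma)/2}(\psi_{n,n+1}^{\gamma,-})=I_1 d^\gamma(L^2_\gamma(\Dm,\Stt^1(T^*\Dm)))$. Since Lemma~\ref{lem:rangeoneforms} exhibits $\operatorname{Ran}(I_1 d^\gamma)$ as the orthogonal direct sum of $I_1(\star\d(H^{1,-\gamma}_0(\Dm)))$ and this tt-range, an element of the range orthogonal to the second summand must lie in the first; thus $u\in I_1(\star\d(H^{1,-\gamma}_0(\Dm)))$, completing the proof.

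The routine parts are the term-by-term computation and the basis expansion, both of which follow from results already in hand. The one genuinely quantitative step, which I expect to be the main point to verify with care, is the summability in the forward inclusion: there one must upgrade the mere vanishing of $\sigma_{n,k}^\gamma$ as $n\to\infty$ to the uniform two-sided comparison $\sigma_{n,j-1}^\gamma\asymp\sigma_{n,j}^\gamma$ on the interior range $1\le j\le n$, since it is exactly this comparison that promotes the closed-span statement \eqref{eq:I1span} to the sharp weighted characterization \eqref{eq:rangeIperp}.
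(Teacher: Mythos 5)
Your proof is correct, but it takes a genuinely different route from the paper's for the forward inclusion. The paper first characterizes the \emph{full} range $I_1d^\gamma(L^2_\gamma(\Dm;S^1(T^*\Dm)))$ (Eq.~\eqref{eq:RanI1dgamma}) by invoking the second one-form decomposition \eqref{eq:oneform2}: after discarding the potential part $d^{-\gamma}\d f$ (killed by $I_1$), every data function is the image of $\tilde w_{-1}\d\zbar + \tilde w_1\d z$ with $\tilde w_{-1}$ \emph{antiholomorphic}. That is the structural trick: the antiholomorphic $\d\zbar$-part only produces the modes $\psi_{n,n+1}^{\gamma,-}$, so the two families of contributions never overlap, each coefficient is a single term $\sigma_{n,k}^\gamma b_{n,k}$ (or $\sigma_{n,n}^\gamma a_n$), and the sharp weighted summability falls out with no estimates; the corollary then follows by intersecting with the orthocomplement of the tt-range via Lemma~\ref{lem:rangeoneforms}. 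You instead expand $\star\d h = -i\,\partial h\,\d z + i\,\dbar h\,\d\zbar$ directly, so your $\d z$- and $\d\zbar$-contributions \emph{do} overlap on the interior diagonals with mismatched singular values $\sigma_{n,j}^\gamma$ and $\sigma_{n,j-1}^\gamma$, and you must control the interference by the uniform ratio bound $(\sigma_{n,j-1}^\gamma/\sigma_{n,j}^\gamma)^2 = j(n-j+1+\gamma)/\bigl((n-j+1)(j+\gamma)\bigr)\le C_\gamma$, which you compute correctly from \eqref{eq:I0range} (only this one-sided bound is actually needed, since the triangle inequality suffices when proving membership rather than exact norms). Your reverse inclusion --- a $\d z$-only preimage followed by placement in the correct summand of the orthogonal splitting of Lemma~\ref{lem:rangeoneforms} --- is essentially the paper's argument. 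The trade-off: the paper's route leans on the extra decomposition lemma (part (2) of Lemma~\ref{lem:oneformdecompSD}) but needs no quantitative comparison of singular values, and it yields the full-range statement \eqref{eq:RanI1dgamma} as a byproduct; your route bypasses that lemma entirely and is in that sense more self-contained, at the cost of the explicit singular-value ratio estimate, which is exactly where the sharpness of \eqref{eq:rangeIperp} (as opposed to the mere closure statement \eqref{eq:I1span}) gets encoded.
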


\begin{proof}
    We begin by proving that 
    \begin{align}
        I_1d^\gamma(L^2_\gamma(\Dm; S^1(T^*\Dm))) = \left\{ \sum_{n=0}^\infty \sum_{k=0}^{n+1} c_{n,k} \psi_{n,k}^{\gamma,-},\ \sum_{n\ge 0} \left(\sum_{k=0}^{n} \frac{|c_{n,k}|^2}{(\sigma_{n,k}^\gamma)^2} + \frac{|c_{n,n+1}|^2}{(\sigma_{n,k}^\gamma)^2} \right) <\infty   \right\}.
        \label{eq:RanI1dgamma}
    \end{align}
    Then \eqref{eq:rangeIperp} follows from a consequence of Lemma \ref{lem:rangeoneforms}, which implies
    \begin{align*}
        I_1({\star}\d(H^{1,-\gamma}_0(\Dm))) = I_1d^\gamma(L^2_\gamma(\Dm; 
    ^1(T^*\Dm))) \cap (I_1 d^\gamma(L^2_\gamma(\Dm, \Stt^1(T^*\Dm))))^\perp,
    \end{align*}
    and the characterization of $I_1 d^\gamma(L^2_\gamma(\Dm, \Stt^1(T^*\Dm)))$ given in Lemma \ref{lem:homeomorphisms}, which implies
    \[ \overline{I_1 d^\gamma(L^2_\gamma(\Dm, \Stt^1(T^*\Dm)))}^{L^2_\gamma} = \text{span}_{L_\gamma^2} ( \psi_{n,0}^{\gamma,-}, \psi_{n,n+1}^{\gamma,-},\ n \ge 0). \]

    On to the proof of \eqref{eq:RanI1dgamma}, first observe that using the decomposition \eqref{eq:oneform2}, it suffices to consider one-forms of the form $w = \tilde{w}_{-1}\d\zbar + \tilde{w_1} \d z$, where $\tilde{w}_{\pm 1}\in L^2_\gamma(\Dm)$ and $\partial \tilde{w}_{-1} = 0$. We can thus parameterize them as 
    \[ \tilde{w}_{-1} = \sum_{n\ge 0} a_n \wtZ_{n,n}^\gamma, \qquad \tilde{w}_1 = \sum_{n\ge 0} \sum_{k=0}^n b_{n,k} \wtZ_{n,k}^\gamma,\]
    where the coefficients satisfy $\sum_{n\ge 0}(|a_n|^2 + \sum_{k=0}^n     |b_{n,k}|^2) <\infty$. For such an element, we have 
    \begin{align*}
        I_1 d^\gamma (\tilde{w_1}\d\zbar + \tilde{w}_{-1} \d z) &= e^{-i(\beta+\alpha+\pi)} I_0 d^\gamma \tilde{w}_{-1} + e^{i(\beta+\alpha+\pi)} I_0 d^\gamma \tilde{w}_1) \\
        &= \sum_{n=0}^\infty \left( e^{-i(\beta+\alpha+\pi)}  a_n \sigma_{n,n}^\gamma \psi_{n,n}^\gamma + e^{i(\beta+\alpha+\pi)} \sum_{k=0}^n b_{n,k} \sigma_{n,k}^\gamma \psi_{n,k}^\gamma \right) \\
        &= \sum_{n=0}^\infty \left( a_n \sigma_{n,n}^\gamma \psi_{n,n+1}^{\gamma,-} + \sum_{k=0}^n b_{n,k} \sigma_{n,k}^\gamma \psi_{n,k}^{\gamma,-} \right).
    \end{align*}
    Upon setting $c_{n,k} := \sigma_{n,k}^\gamma b_{n,k}$ for $0\le k\le n$ and $c_{n,n+1} := \sigma_{n,n}^\gamma a_n$, the claim follows.     
\end{proof}

\subsection{Proof of Theorem \ref{thm:rangedecomp}}

\begin{proof}[Proof of Theorem \ref{thm:rangedecomp}]
Let $k>0$. Now we show $I_{2k}(d^\gamma g_{2k})\perp I_{2k+2}(d^\gamma g_{2k+2})$.
Decompose
\begin{align}
g_{2k} &= g_{2k,-}\d \zbar^{2k} + g_{2k,+}\d z^{2k} \quad\text{and}\quad g_{2k+2} = g_{2k+2,-}\d \zbar^{2k+2} + g_{2k+2,+}\d z^{2k+2},
\end{align}
where
\begin{align}\begin{aligned}
g_{2k,-} = \sum_{n=0}^\infty g_{2k,n,-}\wtZ_{n,n}^\gamma,\quad\quad\quad\quad
&g_{2k,+} = \sum_{n=0}^\infty g_{2k,n,+}\wtZ_{n,0}^\gamma\\
g_{2k+2,-} = \sum_{n=0}^\infty g_{2k+2,n,-}\wtZ_{n,n}^\gamma, \quad\text{and}\quad
&g_{2k+2,+} = \sum_{n=0}^\infty g_{2k+2,n,+}\wtZ_{n,0}^\gamma.
\end{aligned}\end{align}
In the $+$ case, evaluate the forward operator so that
\begin{align}
I(d^\gamma g_{2k,+}) &= \sum_{n=0}^\infty
	\sigma_{n,0}^{\gamma,+} g_{2k,n,+} \psi_{n,-k}^\gamma \quad\text{and}\quad
I(d^\gamma g_{2k+2,+}) = \sum_{n=0}^\infty
	\sigma_{n,0}^{\gamma,+} g_{2k+2,n,+} \psi_{n,-k-1}^\gamma.
\end{align}
By considering the indices of the polynomials, it follows that $I(d^\gamma g_{2k,+})$ and $I(d^\gamma g_{2k+2,+})$ are orthogonal.
Equation \eqref{eq:I1span} is deduced from \eqref{eq:rangeIperp} by taking $L^2_\gamma$-closure. 
\end{proof}


\section{Range characterization - Proof of Theorem \ref{thm:rangechar}} \label{sec:rangeCharac}

We first cover the even order case. 

($\implies$) Suppose $u = I_{2p} d^\gamma f$ for some $f\in L^2_\gamma(\Dm, S^{2p}(T^*\Dm))$. Without loss of generality, we may assume $f$ in iterated-tt form \eqref{eq:fitt_even}. Then 
\[ I_{2p} d^\gamma f = \sum_{j=0}^p I_{2j}d^\gamma \tilde{f}_{2j}, \quad \tilde{f}_0\in L^2_\gamma(\Dm), \quad \tilde{f}_{2j}\in L^2_\gamma(\Dm; \Stt^{2j} (T^*\Dm)),\quad 1\le j\le p.  \]
Then from \eqref{eq:I0range} and Lemma \ref{lem:homeomorphisms}, we find that $I_{2p} d^\gamma f \in \text{span}_{L^2_\gamma} (\psi_{n,k}^{\gamma,+},\ n\ge 0,\ -p\le k\le n+p)$, hence condition (a) is satisfied. Condition (b) follows from the fact that for $1\le j\le p$, $\Pi_{2j}u = I_{2j} d^\gamma \tilde{f}_{2j}$ and Lemma \ref{lem:homeomorphisms}. Condition (c) follows directly from \eqref{eq:I0range}.

($\impliedby$) Suppose $u\in L^2_\gamma(\partial_+ S\Dm)$ satisfies (a)-(b)-(c). We write 
\[ u = \sum_{j\ge 0} \Pi_{2j} u \stackrel{(a)}{=} \sum_{j=0}^p \Pi_{2j} u.  \]
Combining condition (b) and Lemma \ref{lem:homeomorphisms}, for every $1\le j\le p$, there exists $\tilde{f}_{2j} \in L^2_\gamma (\Dm; \Stt^{2j}(T^* \Dm))$ such that $\Pi_{2j}u = I_{2j} d^\gamma \tilde{f}_{2j}$. Then combining condition (c) and \eqref{eq:I0range}, $\Pi_0 u = I_0 d^\gamma \tilde{f}_0$ for some $\tilde{f}_0\in L^2_\gamma(\Dm)$. Piecing this together, we arrive at
\begin{align*}
    u = \sum_{j=0}^p \Pi_{2j}u = \sum_{j=0}^p I_{2j} d^\gamma \tilde{f}_{2j} = I_{2p} d^\gamma \left( \sum_{j=0}^p L^{p-j} \tilde{f}_{2j}\right), 
\end{align*}
setting $f:= \sum_{j=0}^p L^{p-j} \tilde{f}_{2j}$ completes the proof. 

The case of odd tensors is completely similar, where the characterization \eqref{eq:I0range} of the range of $I_0 d^\gamma$ is replaced by the characterization \eqref{eq:rangeIperp} of $I_1 (\star\d (H^{1,-\gamma}_0(\Dm)))$, and the odd-order homeomorphisms in Lemma \ref{lem:homeomorphisms} are used instead of the even-order ones.

\section{Reconstructions} \label{sec:reconstruction}

This section is concerned with the recovery of a tensor field $f\in L^2_\gamma(\Dm; S^m(T^*\Dm))$ in iterated-tt form from ${\mathcal{D}} = I_m d^\gamma f$. 

\subsection{Reconstruction of tt components - proof of Theorem \ref{thm:reconstruction}} ${}$
\paragraph{Case $m = 2p$ for $p\ge 0$.} Following \eqref{eq:fitt_even} we write $f = \sum_{j=0}^p L^{p-j} \tilde{f}_{2j}$, and further decompose, for $1\le j\le p$, 
\[ \tilde{f}_{2j} = \sum_{n=0}^\infty (f_{2j,+,n} \wtZ_{n,0}^\gamma \d z^{2j} + f_{2j,-,n} \wtZ_{n,n}^\gamma \d \zbar^{2j}), \quad \sum_{n=0}^\infty (|f_{2j,+,n}|^2 + |f_{2j,-,n}|^2)<\infty.   \]
Then recalling $\sigma_{n,0}^\gamma=\sigma_{n,n}^\gamma$ and by direct application of Lemma \ref{lem:ISVD}, we obtain
\begin{align}
    I_{2p} d^\gamma f = I_0 d^\gamma \tilde{f}_0 + \sum_{j=1}^n \sum_{n=0}^\infty  \sigma_{n,0}^\gamma (f_{2j,+,n} \psi_{n,-j}^{\gamma,+} + f_{2j,-,n}\psi_{n,n+j}^{\gamma,+}).
\end{align}
Since all summands are orthogonal by Theorem \ref{thm:rangedecomp}, and the $\psi_{n,k}^{\gamma,+}$ basis is orthonormal, we find that 
\[ f_{2j,+,n} = \frac{(\mathcal{D},\psi_{n,-j}^{\gamma,+})}{\sigma_{n,0}^\gamma}, \quad f_{2j,-,n} = \frac{(\mathcal{D},\psi_{n,n+j}^{\gamma,+})}{\sigma_{n,0}^\gamma}, \quad 1\le j\le p,\quad n\ge 0, \]
and hence for $1\le j\le p$, the tt component $\tilde{f}_{2j}$ is recovered via the formula
\begin{align}
    \tilde{f}_{2j} := \sum_{n\ge 0} \frac{1}{\sigma_{n,0}^\gamma} ((\mathcal{D},\psi_{n,-j}^{\gamma,+}) \wtZ_{n,0}^\gamma\ \d z^{2j} + (\mathcal{D},\psi_{n,n+j}^{\gamma,+}) \wtZ_{n,n}^\gamma\ \d \zbar^{2j}).
    \label{eq:f2p_recons}
\end{align}

\paragraph{Case $m= 2p+1$ for $p\ge 0$.} Following \eqref{eq:fitt_odd}, we write $f = d^{-\gamma}{\star} \d h + \sum_{j=0}^p L^{p-j} \tilde{f}_{2j+1}$, and further decompose, for $0\le j\le p$, 
\[ \tilde{f}_{2j+1} = \sum_{n=0}^\infty (f_{2j+1,+,n} \wtZ_{n,0}^\gamma \d z^{2j+1} + f_{2j+1,-,n} \wtZ_{n,n}^\gamma \d \zbar^{2j+1}), \quad \sum_{n=0}^\infty (|f_{2j+1,+,n}|^2 + |f_{2j+1,-,n}|^2)<\infty.   \]
Then by direct application of Lemma \ref{lem:ISVD}, we obtain
\begin{align}
    I_{2p+1} d^\gamma f = I_1(\star\d v) + \sum_{j=0}^p \sum_{n=0}^\infty  \sigma_{n,0}^\gamma (f_{2j+1,+,n} \psi_{n,-j}^{\gamma,-} + f_{2j+1,-,n}\psi_{n,n+j+1}^{\gamma,-}).
\end{align}
Since all summands are orthogonal by Theorem \ref{thm:rangedecomp}, and the $\psi_{n,k}^{\gamma,-}$ basis is orthonormal, we find that 
\[ f_{2j+1,+,n} = \frac{(\mathcal{D},\psi_{n,-j}^{\gamma,-})}{\sigma_{n,0}^\gamma}, \quad f_{2j+1,-,n} = \frac{(\mathcal{D},\psi_{n,n+j+1}^{\gamma,-})}{\sigma_{n,0}^\gamma}, \quad 1\le j\le p,\quad n\ge 0, \]
and hence for $0\le j\le p$, the tt component $\tilde{f}_{2j+1}$ is recovered via the formula
\begin{align}
    \tilde{f}_{2j+1} := \sum_{n\ge 0} \frac{1}{\sigma_{n,0}^\gamma} ((\mathcal{D},\psi_{n,-j}^{\gamma,-}) \wtZ_{n,0}^\gamma\ \d z^{2j+1} + (\mathcal{D},\psi_{n,n+j+1}^{\gamma,-}) \wtZ_{n,n}^\gamma\ \d \zbar^{2j+1}).
    \label{eq:f2p1_recons}
\end{align}

We now show that these inversions can also be done through the integral formulas stated in Theorem \ref{thm:reconstruction}.

\begin{proof}[Proof of Theorem \ref{thm:reconstruction}]\label{pf:kernel}
Let $\mathcal{D}=I_md^\gamma f$ be the image of a symmetric $m$-tensor in iterated-tt form. In the even case, suppose $m=2p$ and let $j$ satisfy $1\leq j\leq p$. Recalling that $\wtZ_{n,k}^\gamma = Z_{n,k}^\gamma/\sigma_{n,k}^\gamma$ (see \eqref{eq:normZnk}), \eqref{eq:f2p_recons} also reads
\begin{align}
    \tilde{f}_{2j} =  \left(\sum_{n=0}^\infty \frac{(\mathcal{D}, \psi_{n,-j}^{\gamma,+})_{L^2_\gamma(\inward)}}
    {(\sigma_{n,0}^\gamma)^2} Z_{n,0}^\gamma\right)\ \d z^{2j}  +
 \left(\sum_{n=0}^\infty \frac{(\mathcal{D}, \psi_{n,n+j}^{\gamma,+})_{L^2_\gamma(\inward)}}
    {(\sigma_{n,0}^\gamma)^2} Z_{n,n}^\gamma\right)\ \d\zbar^{2j}. \label{eq:g2jSVD}
\end{align}
We show how to reconstruct the first summand, and a formula can be obtained for the second summand by complex conjugation. Start by interchanging pairings with summation, and recall that $Z_{n,0}^\gamma(z) = \hat g_{n,0}^\gamma z^n$ to write the first term as
\begin{align}
    \sum_{n=0}^\infty \frac{(\mathcal{D}, \psi_{n,-j}^{\gamma,+})_{L^2_\gamma(\inward)}}{(\sigma_{n,0}^\gamma)^2}Z_{n,0}^\gamma
= \left(\mathcal{D}, G_{2j}^\gamma (\cdot,\cdot;z) \right)_{L^2_\gamma(\inward)}, \qquad G_{2j}^\gamma (\cdot,\cdot;z) :=\sum_{n=0}^\infty \frac{\psi_{n,-j}^{\gamma,+}\hat g_{n,0}^\gamma z^n}{(\sigma_{n,0}^\gamma)^2}.
\end{align}
Substitute $\psi_{n,-j}^{\gamma,+}=\mu^{2\gamma+1}e^{(n+2j)i(\beta+\alpha+\pi)}\hat L_n^\gamma(\sin\alpha)$ to obtain
\begin{align}
G_{2j}^\gamma(\beta,\alpha;z) = \sum_{n=0}^\infty \frac{\psi_{n,-j}^{\gamma,+}\hat g_{n,0}^\gamma z^n}{(\sigma_{n,0}^\gamma)^2} = \mu^{2\gamma+1}e^{2ji(\beta+\alpha+\pi)}
\sum_{n=0}^\infty \frac{e^{ni(\beta+\alpha+\pi)}\hat L_n^\gamma(\sin\alpha)\hat g_{n,0}^\gamma z^n}{(\sigma_{n,0}^\gamma)^2}.
\end{align}
In Lemma \ref{lem:psinkpreimage}, it was computed that $\hat g_{n,0}^\gamma = \hat\ell_n^\gamma/(2i)^n$. From Appendix \ref{sec:normalization}, expand
\begin{align}
\hat L_n^\gamma(\sin\alpha)\hat g_{n,0}^\gamma
= C_n^{\gamma+1}(\sin\alpha)\frac{n!(2\gamma+1)!^2(n+\gamma)!}{i^n(n+2\gamma+1)!^2\gamma!}\left(\frac{2^{2\gamma+1}n!(2\gamma+1)!^2}{(n+\gamma+1)(n+2\gamma+1)!}\right)^{-1}2\pi.
\end{align}
Dividing by $(\sigma_{n,0}^\gamma)^2$ whose expression is given in \eqref{eq:I0range} yields
\begin{align}
    G_{2j}^\gamma(\beta,\alpha;z) &= \frac{\mu^{2\gamma+1}e^{2ji(\beta+\alpha+\pi)}}{2^{4\gamma+2}\gamma!^2} \sum_{n=0}^\infty (ze^{i(\beta+\alpha+\pi)}/i)^n C_n^{\gamma+1}(\sin\alpha)(n+\gamma+1) \\
    &\! \stackrel{\eqref{eq:gegenbauerseries}}{=} \frac{\mu^{2\gamma+1}e^{2ji(\beta+\alpha+\pi)}}{2^{4\gamma+2}\gamma!^2} \frac{(\gamma+1)(1+e^{i(2\beta+2\alpha)}z^2)}{((e^{i\beta}z+1)(e^{i(\beta+2\alpha+\pi)}z+1))^{\gamma+2}}.
\end{align}
which is the integral kernel of Equation \eqref{eq:kernel}.
By summing
\begin{align}
    \tilde{f}_{2j} &=  (\mathcal{D}, G_{2j}^\gamma)_{L^2_\gamma(\inward)}\ \d z^{2j} + (\mathcal{D}, \overline{G_{2j}^\gamma})_{L^2_\gamma(\inward)}\ \d\zbar^{2j}.
\end{align}
over $j=1,\dots,p$, the reconstruction formula of Equation \eqref{eq:reconeven} is obtained.

In the odd case, suppose $m=2p+1$ where $p\geq 0$. By a similar argument, 
\begin{align}
f_{2j+1} &= \left (\mathcal{D}, \sum_{n=0}^\infty \frac{\psi_{n,-j}^\gamma e^{i(\beta+\alpha+\pi)}}{(\sigma_{n,0}^\gamma)^2}Z_{n,0}^\gamma\right)\ \d z^{2j+1} + \left (\mathcal{D}, \sum_{n=0}^\infty \frac{\psi_{n,n+j}^\gamma e^{-i(\beta+\alpha+\pi)}}{(\sigma_{n,n}^\gamma)^2}Z_{n,n}^\gamma\right)\ \d \zbar^{2j+1}.
\end{align}
By the computation of the even case, the sums in the pairings equal $e^{i(\beta+\alpha+\pi)}G_{2j}^\gamma$ and $\overline{e^{i(\beta+\alpha+\pi)}G_{2j}^\gamma}$, respectively. Summing over $j=0,\dots,p$ leads to Equation \eqref{eq:reconodd}.
\end{proof}

\subsection{Inversion of $I_0 d^\gamma$ and $I_1 {\star}\d$}

\subsubsection{SVD-based inversions (case $\gamma>-1$)} ${}$
\medskip

\noindent{\bf Recovery of $f\in L^2_\gamma(\Dm)$ from $I_0 d^\gamma f$.} If $u\in L^2_{\gamma,+} (\partial_+ S\Dm)$ satisfies (a)-(b)-(c) from Theorem \ref{thm:rangechar} for some $m\ge 0$, then the reconstruction of the $\tilde{f}_0$ term can be done via direct inversion through the SVD of $I_0 d^\gamma$: by condition (c), $\Pi_0 u = \sum_{n=0}^\infty \sum_{k=0}^n a_{n,k} \psi_{n,k}^{\gamma,+}$ where the coefficients satisfy $\sum_{n=0}^\infty \sum_{k=0}^n \frac{|a_{n,k}|^2}{(\sigma_{n,k}^\gamma)^2}<\infty$, then one may find that $\Pi_0 u = I_0 d^\gamma \tilde{f}_0$, where 
\[ \tilde{f}_0 := \sum_{n=0}^\infty \sum_{k=0}^n \frac{a_{n,k}}{\sigma_{n,k}^\gamma} \wtZ_{n,k}^\gamma \in L^2_\gamma (\Dm). \]

\noindent{\bf Recovery of $h\in H^{1,-\gamma}_0(\Dm)$ from $I_1(\star\d h)$.}
By characterization \eqref{eq:rangeIperp}, 
\[ I_1(\star\d h) = \sum_{n=1}^\infty \sum_{k=1}^n c_{n,k} \psi_{n,k}^{\gamma,-}, \quad \text{where} \quad \sum_{n\ge 1} \sum_{k=1}^{n} \frac{|c_{n,k}|^2}{(\sigma_{n,k}^\gamma)^2} <\infty. \]
While an SVD of $I_1 {\star} \d \colon H^{1,-\gamma}_0(\Dm)\to L^2_{\gamma,-}(\partial_+ S\Dm)$ is unknown at the moment, there is the following alternate route: first notice that $I_1 ({\star}\d h) = I_1 d^\gamma (w_1\d z)$, where 
\begin{align}
    w_1 := \sum_{n=1}^\infty \sum_{k=1}^n \frac{c_{n,k}}{\sigma_{n,k}^\gamma} \wtZ_{n,k}^\gamma = \sum_{n=1}^\infty \sum_{k=1}^n \frac{(I_1 (\star\d h),\psi_{n,k}^{\gamma,-})}{\sigma_{n,k}^\gamma} \wtZ_{n,k}^\gamma \in L^2_\gamma(\Dm) \cap (\ker \dbar)^\perp,
    \label{eq:w1recons}
\end{align}
so $w_1$ can be recovered from the data. Since $w_1 \perp (\ker \dbar)$, Proposition  \ref{prop:decomp} gives $w_1 = (-2i) d^{-\gamma} \partial h$ for some $h\in H^{1,-\gamma}_0(\Dm)$, obtained by solving the elliptic equation
\begin{align}
    \dbar d^{-\gamma} \partial h = \frac{i}{2}\dbar w_1, \qquad h|_{\partial \Dm} = 0.
    \label{eq:hproblem}
\end{align}
We contend that the solution of the above problem is the $h$ we are looking for: indeed, the relation $w_1 = (-2i) d^{-\gamma}\partial h$ gives 
\begin{align*}
    d^{\gamma} w_1 \d z = (-2i) \partial h\ \d z = (-2i)  \left( \frac{\d h + i {\star}\d h}{2} \right) = \star\d h - i \d h. 
\end{align*}
Upon applying $I_1$, the last term vanishes and we indeed find that $I_1(\star\d h) = I_1 d^\gamma (w_1 \d z)$.

\subsubsection{Pestov-Uhlmann formulas (case $\gamma\ge 0$)}

The inversion of $I_0 d^\gamma$ on $L^2_\gamma (\Dm)$ is equivalent to the inversion of $I_0$ on $d^\gamma L^2_\gamma (\Dm) = L^2_{-\gamma} (\Dm)$. Now notice that for $\gamma\ge 0$, we have the continuous injections
\begin{align*}
    L^2_{-\gamma} (\Dm) \hookrightarrow L^2(\Dm), \quad \text{and} \quad H^{1,-\gamma}_0(\Dm) \hookrightarrow H^1_0(\Dm),
\end{align*}
in particular, on may use the Pestov-Uhlmann reconstruction formulas from \cite{Pestov2004}, see also \cite[Proposition 5]{Monard2017a}, to reconstruct $f\in L_{-\gamma}^2(\Dm)$ from $I_0 f$, or to reconstruct  $h\in H^{1,-\gamma}_0(\Dm)$ inversion of $I_1(\star\d h)$

\appendix
\section{Normalization}\label{sec:normalization}
The Gegenbauer polynomials are defined by the generating function \cite[Equation (1)]{gegenbauer}
\begin{align}
\frac{1}{(1 {-} 2wt {+} w^2)^{\gamma+1}} =: \sum_{n=0}^\infty w^{n}C_{n}^{\gamma+1}(t).
\label{eq:genfunc}
\end{align}
For each $n\geq 0$, define the polynomial 
\begin{align}
\hat L_n^{\gamma}(x) &\coloneq 
C_n^{\gamma+1}(x)\frac{n!(2\gamma+1)!}{(n+2\gamma+1)!}\left(\frac{2^{2\gamma+1}n!(2\gamma+1)!^2}{(n+\gamma+1)(n+2\gamma+1)!}\right)^{-1/2}\sqrt{2\pi}\\
&= \hat \ell_n^\gamma x^n + \cdots, \quad \hat\ell_n^\gamma = \frac{2^n(2\gamma+1)!(n+\gamma)!}{(n+2\gamma+1)!\gamma!}\left(\frac{2^{2\gamma+1}n!(2\gamma+1)!^2}{(n+\gamma+1)(n+2\gamma+1)!}\right)^{-1/2}\sqrt{2\pi}
\end{align}
normalized \cite[Equations (A.3,A.4)]{Wuensche2005} so that $\|\hat L_n^\gamma\|_{L^2([-1,1],(1-s^2)^{\gamma+1/2})}=\sqrt{2\pi}$. The \emph{fan-beam} polynomials are defined in Equation \eqref{def:psink}: $\psi_{n,k}^\gamma(\beta,\alpha) \coloneq  \mu^{2\gamma+1}e^{(n-2k)i(\beta+\alpha+\pi)}\hat L_n^\gamma(\sin\alpha)/2\pi$. By the choice of constants, it follows that $\|\psi_{n,k}^\gamma\|_{L_\gamma^2(\inward)}=1$.

A Zernike polynomial expands $Z_{n,k}^\gamma(z) = \hat g_{n,k}^\gamma \zbar^k z^{n-k}+\cdots$ where
\begin{align}
\hat g_{n,k}^\gamma = (-1)^k\frac{\hat\ell_n^\gamma}{(2i)^n}\binom{n}{k}.
\end{align}

Differentiating \eqref{eq:genfunc} with respect to $w$ gives the relation
\begin{align}
\frac{(\gamma+1)(1-w^2)}{(1-2wt+w^2)^{\gamma+2}} = \sum_{n=0}^\infty w^nC_{n}^{\gamma+1}(t)(n+\gamma+1), \label{eq:gegenbauerseries}
\end{align}
which is used to deduce the integral reconstruction kernel of Equation \eqref{eq:kernel}.

\section{Poincar\'e's inequality: proof of Lemma \ref{lem:poincare}}\label{sec:poincare}

In this appendix, we prove the Poincar\'e type inequality given in Lemma \ref{lem:poincare}. A similar inequality was proven in
\cite{hurri1990weighted}.

\begin{proof}[Proof of Lemma \ref{lem:poincare}]
In polar coordinates, $x=re^{i\theta}$ and $|\nabla|^2 = |\partial_r|^2
+ |\partial_\theta|^2/r^2$.
If $u$ is as given, then $u(1,\theta)\equiv 0$. The Fundamental Theorem of Calculus
implies
\begin{align}
u(r,\theta) = \int_1^r u_r(s,\theta) \d s = -\int_r^1 u_r(s,\theta)\d s.
\end{align}
After re-writing the integrand above, the Cauchy-Schwarz inequality implies
\begin{align}
|u(r,\theta)|^2 &=\left|\int_r^1 \left(u_r(s,\theta)(1-s^2)^{\gamma/2}\right)(1-s^2)^{-\gamma/2} \d s\right|^2\\
&\leq \int_r^1|u_r(s,\theta)|^2(1-s^2)^\gamma \d s\int_r^1(1-s^2)^{-\gamma} \d s.
\end{align}
Evaluate the supremum
\begin{align}
C_0 \coloneq \sup_{s\in[0,1]} (1+s)^{-\gamma} = \begin{cases}
2^{-\gamma}, & \gamma<0,\\
1, & \gamma \geq 0.
\end{cases}
\end{align}
Estimate the integral above by extracting $C_0$ and evaluate
the anti-derivative of $(1-s)^{-\gamma}$ to obtain
\begin{align}
\int_r^1(1-s^2)^{-\gamma}\d s &= \int_r^1 (1+s)^{-\gamma} (1-s)^{-\gamma} \d s\\
&\leq C_0\int_r^1(1-s)^{-\gamma}  \d s = C_0\left(-\frac{1}{-\gamma+1}(1-s)^{-\gamma+1}\right)_r^1.
\end{align}
Because the exponent $-\gamma+1$ is greater than zero, the
quantity $(1-s)^{-\gamma+1}$ vanishes when $s=1$, leading to the estimate
\begin{align}
\int_r^1(1-s^2)^{-\gamma}\d s \leq \frac{C_0}{-\gamma+1}(1-r)^{-\gamma+1}.
\end{align}
The inequality $r\leq 1$ implies $r^2 \leq r$ and $1-r\leq 1-r^2$.
The positivity of $-\gamma+1$ also implies
$(1-r)^{-\gamma+1} \leq (1-r^2)^{-\gamma+1}$. Therefore,
\begin{align}
|u(r,\theta)|^2 \leq \frac{C_0}{-\gamma+1}(1-r^2)^{-\gamma+1} \int_r^1|u_r(s,\theta)|^2(1-s^2)^\gamma \d s,    
\end{align}
which implies
\begin{align}
|u(r,\theta)|^2(1-r^2)^\gamma \leq \frac{C_0}{-\gamma+1}(1-r^2)\int_r^1|u_r(s,\theta)|^2(1-s^2)^\gamma \d s.
\end{align}
Multiply both sides of this inequality by $r$ and integrate over
$r\in[0,1]$ and $\theta\in[0,2\pi]$. After applying
Fubini's theorem in the variables $s$ and $r$, one obtains
\begin{align}
\int_0^{2\pi}\!\int_0^1 |u(r,\theta)|^2(1-r^2)^\gamma r\d r\d\theta
&\leq \frac{C_0}{-\gamma+1}\int_0^{2\pi}\int_0^1\int_r^1
	|u_r(s,\theta)|^2(1-s^2)^\gamma ds (1-r^2)r \d r\d\theta\\
&= \frac{C_0}{-\gamma+1}\int_0^{2\pi}\int_0^1\int_0^s
	|u_r(s,\theta)|^2(1-s^2)^\gamma (1-r^2)r \d r\d s\d\theta\\
&= \frac{C_0}{-\gamma+1}\int_0^{2\pi}\int_0^1
	|u_r(s,\theta)|^2(1-s^2)^\gamma
	\left(\frac{s^2}{2}-\frac{s^4}{4}\right)\d s \d\theta\\
&\leq \frac{C_0}{-\gamma+1}\int_0^{2\pi}\int_0^1
	|u_r(s,\theta)|^2(1-s^2)^\gamma s \d s\d\theta.
\end{align}
The integral on the left-hand side equals the squared norm
$\|u\|_{L^2_{\gamma}(\mathbb{D})}^2$ and the right-hand
side is a multiple of $\|u_r\|_{L^2_\gamma(\mathbb{D})}^2\leq\|\nabla u\|_{L^2_\gamma(\mathbb{D})}^2$, which implies the estimate
\begin{align}
\|u\|_{L^2_{\gamma}(\mathbb{D})} \leq C\|\nabla u\|_{L^2_\gamma(\mathbb{D})},
\end{align}
where the constant $C>0$ only depends on $\gamma$ as in the equation
$C^2 = C_0/(-\gamma+1)$.
\end{proof}

\subsection*{Acknowledgement.} The authors acknowledge partial funding from NSF-CAREER grant DMS-1943580.


\end{document}